\documentclass[times]{oupau_arxiv}
\usepackage{mathtools}
\usepackage{xcolor}
\usepackage{amssymb}
\usepackage{enumitem}
\usepackage{natbib}
\usepackage[utf8]{inputenc}
\usepackage[cyr]{aeguill}
\usepackage{tikz-cd}
\usepackage[pdfdisplaydoctitle=true,
            colorlinks=true,
            urlcolor=blue,
            citecolor=blue,
            linkcolor=blue,
            pdfstartview=FitH,
            pdfpagemode= UseNone,
            bookmarksnumbered=true]{hyperref}
\usepackage{todonotes}
\usepackage[all]{xy}
\usepackage{stmaryrd}
\usepackage{comment}
\mathtoolsset{showonlyrefs}
\usepackage{setspace}
\usepackage{lmodern}
% Using \doublespacing in the preamble 
% changes the text to double-line spacing
%\doublespacing

% ----------------------------------------------------------------
\vfuzz2pt % Don't report over-full v-boxes if over-edge is small
\hfuzz2pt % Don't report over-full h-boxes if over-edge is small
% THEOREMS -------------------------------------------------------
%\newtheorem{theorem}{Theorem}[section]

%\newtheorem{corollary}[theorem]{Corollary}
%\newtheorem{lemma}[theorem]{Lemma}
%\newtheorem{proposition}[theorem]{Proposition}
%\theoremstyle{definition}
%\newtheorem{definition}[theorem]{Definition}

%\newtheorem{example}[theorem]{Example}
%\theoremstyle{remark}
\newtheorem{remark}[theorem]{Remark}
%\numberwithin{equation}{section}

\newtheorem*{theorem*}{Theorem}

% Math
\newcommand{\R}{\mathbb R}

\newcommand{\Dens}{\operatorname{Dens}_+(M)}

\newcommand{\define}[1]{\textbf{#1}}

\newcommand{\WFRAF}{{\textrm{WFR}_{\delta}^{\eich,\ef}}}
\newcommand{\WFRAFho}{{\textrm{WFR}_{\delta}^{\eich,0}}}
\newcommand{\WFRGM}{{\textrm{WFR}_{\delta}^{1,\ef}}}
\newcommand{\WFRconst}{{\textrm{WFR}_{\delta}^{\eich,\Ce}}}

\newcommand{\Meas}{\mathcal{M}}
\newcommand{\Mplus}{\Meas^+}
\newcommand{\Mplushf}{\Meas^+_{\eich,C}(\Omega)}
\newcommand{\MplushC}{\Meas^+_{\eich,C}(\Omega)}

\newcommand{\rhosq}{(\rho_0,\rho_1)^2}

% Functions defining the constraints
\newcommand{\eich}{H}
\newcommand{\Htx}{\eich(t,x)}

\newcommand{\ef}{F}
\newcommand{\Ft}{\ef(t)}
\newcommand{\Ce}{C}

\newcommand{\CE}{\mathcal{CE}(\rho_0,\rho_1)}
\newcommand{\CEHC}{\mathcal{CE}_{\eich,\Ce}(\rho_0,\rho_1)}
\newcommand{\CEHF}{\mathcal{CE}_{\eich,\ef}(\rho_0,\rho_1)}

\newcommand{\AFF}{\mathcal{A}}
\newcommand{\AHC}{\AFF(\eich,\Ce)}
\newcommand{\AHF}{\AFF(\eich,\ef)}

\title{Path constrained unbalanced optimal transport}

% Enter the publication year and the ID number of the paper
\volumeyear{2009}
\paperID{rnn999}

% Alphabetical order

% Author name(s)
\author{Martin Bauer\affil{1}, Nicolas Charon\affil{2}, Tom Needham\affil{1}, and Mao Nishino\affil{1}}
% Abbreviated author name for running headers
\abbrevauthor{M. Bauer, N. Charon, T. Needham, and M. Nishino}
% Abbreviated author name for first page header
\headabbrevauthor{M. Bauer, N. Charon, T. Needham, and M. Nishino}

\address{%
\affilnum{1}Florida State University,
Tallahassee, Florida 32306
\affilnum{2}University of Houston, Houston, Texas 77204}

% Address / e-mail address of corresponding author
\correspdetails{ncharon@central.uh.edu}

% Received/revised/accepted dates will be entered by the publisher during production of an accepted paper. Please do not edit these placeholders for submission.
\received{1 Month 20XX}
\revised{11 Month 20XX}
\accepted{21 Month 20XX}

% Enter details of editor communicating this article
%\communicated{A. Editor}

\begin{abstract}
Dynamical formulations of optimal transport (OT) frame the task of comparing distributions as a variational problem which searches for a path between distributions minimizing a kinetic energy functional. In applications, it is frequently natural to require paths of distributions to satisfy additional conditions. Inspired by this, we introduce a model for dynamical OT which incorporates constraints on the space of admissible paths into the framework of unbalanced OT, where the source and target measures are allowed to have a different total mass. Our main results establish, for several general families of constraints, the existence of solutions to the variational problem which defines this path constrained unbalanced optimal transport framework. These results are primarily concerned with distributions defined on an Euclidean space, but we  extend them to distributions defined over parallelizable Riemannian manifolds as well. We also consider metric properties of our framework, showing that, for certain types of constraints, our model defines a metric on the relevant space of distributions. This metric is shown to arise as a geodesic distance of a Riemannian metric, obtained through an analogue of Otto’s submersion in the classical OT setting.

\end{abstract}

\begin{document}
\maketitle

\setcounter{tocdepth}{1}
% \tableofcontents

%%%%%%%%%%%%%%%%%%%%%%%%%%%%%%%%%%%%%%%

\section{Introduction}
\paragraph{Background:}
The field of optimal transport (OT) studies various methods for comparing probability distributions, generally guided by the intuition that the comparison should be performed by measuring the optimal energy required to transport mass from one distribution to the other. It is a well-established area of study in both pure and applied mathematics~\cite{villani2009optimal}, and has more recently seen a surge of interest due to its applications in, for example,  geometry processing~\cite{solomon2015convolutional} and machine learning~\cite{peyre2019computational}. The classical Kantorovich formulation of OT compares probability distributions $\rho_0$ and $\rho_1$ by searching for a \emph{coupling} of the distributions—that is, a joint distribution with marginals $\rho_0$ and $\rho_1$—which is optimal with respect to some cost. There are natural choices of the cost function in certain situations (for example, when the distributions are Borel measures on the same metric space), and under these circumstances, the optimal cost leads to a metric on the space of distributions referred to as \emph{Wasserstein distance}. 
Motivated by applications in which the pure transport assumption can be an unrealistic and limiting factor, this framework has been recently extended to an unbalanced setting, which allows for the creation or destruction of mass via a local source term~\cite{chizat2018interpolating,liero2018optimal,chapel2020partial}. 
An immediate advantage of these \emph{unbalanced optimal transport problems}, which play a central role in the present paper, is their ability to facilitate the comparison of densities with uneven mass.
%

%It is frequently the case in applications that one wishes to compare measures with \emph{different} total mass; to address this, various frameworks have been developed in recent years to extend Wasserstein distances to this setting~\cite{chizat2018interpolating,liero2018optimal,chapel2020partial}. 
%\todomartin{The last paragraph above is not ideal good in my opinion, but i am not yet sure how to introduce it better}

Another perspective on OT which is important for this paper is the dynamical one. When considering densities on $\R^n$, a famous result of Benamou and Brenier~\cite{benamou_computational_2000} states that the Wasserstein distance can be realized as the ``kinetic energy” of an energy-minimizing path of densities (i.e. a time-evolving density) from the source to the target, where paths are subject to a certain PDE encoding constraints on the evolution.

\paragraph{Central Goal:}
The goal of this paper is to understand basic analytical and geometrical properties of a certain variant of this formulation---the \define{constrained unbalanced optimal transport problem}---which is both \emph{unbalanced}, in the sense that it is able to compare distributions with different total mass, and \emph{constrained}, in the sense that additional requirements are imposed on the evolving path of densities. In particular, our main results establish the existence of minimizers for the associated optimization problem and give conditions under which it leads to a metric on a relevant space of measures.

\paragraph{Motivation:}
The motivation for these investigations comes from the general idea that practical OT problems can naturally come with additional constraints on the space of measures being considered. In certain cases, those constraints are preserved along the geodesics of the OT metrics. This is notably the case when one is interested in comparing Gaussian distributions over $\mathbb{R}^d$ based on the Wasserstein metric, for which the space of Gaussian distributions is totally geodesic. Yet this situation is rather exceptional, and there are many examples of fairly simple and natural constraints which are not preserved along optimal paths for balanced or unbalanced optimal transport. For instance, in the unbalanced OT model described above, the subspace of probability measures is not totally geodesic as evidenced by some of the examples studied in \cite{chizat2018interpolating}. In fact, it has been recently shown that, for a certain choice of energy functional, the unbalanced OMT model can be viewed as a conic extension over the space of probability densities~\cite{laschos2019geometric}. This interpretation allows the derivation of an explicit formula for the corresponding distance on the base manifold -- the space of probability densities -- and has been used to develop numerical algorithms allowing to use the unbalanced energy functional for applications in random sampling~\cite{jing2024machine}. We will describe this particular example of constraint unbalanced OMT problem in more detail in Section~\ref{ex:WFR_prob}.

{Another example is the situation of \textit{area measures} of convex domains in $\mathbb{R}^d$. In fact, this example initiated the present work, following the preliminary analysis presented in \cite{charon_length_2021} for the one-dimensional case.} These are measures defined on the unit sphere $S^{d-1}$ representing the distribution of {the} area of the domain's boundary along the different normal directions. As suggested in \cite{charon_length_2021}, building a Riemannian metric on the space of area measures induces a Riemannian metric over the set of all convex sets via the correspondence given by the Minkowski-Fenchel-Jessen theorem, which in turn may be used to evaluate distances and recover geodesics between such sets. Although natural at first sight, standard balanced or unbalanced optimal transport metrics on $S^{d-1}$ are not directly suited for that purpose since area measures must satisfy a specific ``closure'' constraint, namely that the first moment vanishes; a condition that is not preserved along optimal paths for the Wasserstein or WFR metrics, cf. Example~\ref{ex:area_measures}. More generally, one may be interested in the comparison of measures with some prescribed set of moments or covariance structures. 

Moreover, beyond the case of static/stationary constraints, another plausible scenario is to observe the time evolution of a specific quantity -- such as the total mass, cf. Example~\ref{ex:totalmass_movingbarrier}, of the measure or the expectation of some function -- and then attempt to find an optimal interpolating path enforcing this set of constraints. Finally, the present framework would also allow {us} to model the situation of a moving obstacle, i.e., unbalanced optimal transport with a (potentially) time evolving barrier within the domain,  cf. Example~\ref{ex:totalmass_movingbarrier}.
The common problem in all such examples is coming up with a constrained version of OT or UOT that can incorporate path constraints into the dynamic formulation. We should emphasize that several past works have considered related constrained variants of dynamic optimal transport models, in particular \cite{papadakis2014optimal,kerrache2022constrained,neklyudov2023computational}. 
Yet, the aforementioned references have primarily focused on deriving numerical methods to tackle specific cases. A systematic study of the theoretical properties of constrained optimal transport, especially in the unbalanced setting{,} remained missing and is the content of the present article. Of particular importance is the question of the existence of optimal paths, which, as we shall see, cannot be deduced straightforwardly from the corresponding results in standard optimal transport and is one of the central contributions of the present work. 
 %{For example ... (we should add some semi-specific examples here). Problems of this general form have previously been considered in, e.g.,~\cite{papadakis2014optimal,charon_length_2021,neklyudov2023computational}.}

We emphasize that we work in the setting of \emph{dynamical} OT, where constraints are used to define admissible paths of measures which interpolate between the target measures. An alternative approach is to apply constraints in the static (Kantorovich) formulation of OT by imposing additional conditions on couplings between the target measures. For example, this is a useful perspective in mathematical finance, where it is natural to impose \emph{martingale constraints} on transport plans~\cite{beiglbock2013model,galichon2014stochastic,ekren2018constrained}. This type of framework is also used to compare Gaussian mixture models, in which case one obtains a natural metric by also restricting couplings to be themselves Gaussian mixtures~\cite{delon2020wasserstein,wilson2023wasserstein}. Although these formulations of constrained static OT are quite useful, they are unable to capture the more detailed dynamic constraints described in the previous paragraph. The constraint model we consider in this paper is distinct from and complementary to those static models. To the best of our knowledge, the only (dynamically) constrained model studied to date is the case where unbalanced OMT is restricted to the space of probability densities~\cite{laschos2019geometric}, as described above. The goal of the present article is to provide a general framework allowing different types of (application-oriented) constraints to be incorporated into the unbalanced OMT problem. Next{,} we shall briefly summarize the proposed mathematical model.

\paragraph{Mathematical Model:}
To describe our results in more detail, let us introduce some notation. Let $\Omega \subset \R^n$ be a compact set with $C^1$ boundary and let $u_0,u_1:\Omega \to \R$ be (sufficiently smooth) probability densities with respect to Lebesgue measure. The \emph{Benamou-Brenier formulation of OT} solves the optimization problem
\begin{equation}\label{eqn:benamou_brenier_intro}
\inf_{u,v} \int_0^1 \int_\Omega \frac{\|v(t,x)\|^2}{u(t,x)} \; dx \, dt \quad \mbox{subject to} \quad \partial_t u + \nabla \cdot v = 0, \; u(0,\cdot) = u_0, \; u(1,\cdot) = u_1, 
\end{equation}
where the infimum is taken over paths of densities $u:[0,1] \times \Omega \to \R$ and vector fields $v:[0,1] \times \Omega \to \R^n$. This dynamical formulation of OT can be extended to compare densities $u_0$ and $u_1$ which do not necessarily have the same mass. Following~\cite{chizat2018interpolating}, one modifies the Benamou-Brenier formulation of OT by introducing a \emph{source term} $w:[0,1] \times \Omega \to \R$; the \emph{unbalanced OT problem} is then written as 
\begin{equation}\label{eqn:unbalanced_OT_intro}
\inf_{u,v,w} \int_0^1 \int_\Omega \frac{\|v(t,x)\|^2 + \delta^2 w(t,x)^2}{u(t,x)} \; dx \, dt \quad \mbox{subject to} \quad \partial_t u + \nabla \cdot v = w, \; u(0,\cdot) = u_0, \; u(1,\cdot) = u_1, 
\end{equation}
where $\delta \geq 0$ is a hyperparameter controlling the penalty for the creation or destruction of mass. The PDE constraint is referred to as the \emph{continuity equation}.

 To address the constrained situations described above, we consider the unbalanced OT energy with additional constraints on the path of densities $u$. To be more specific, we consider  \emph{affine constraints} on $u$ of the form
\begin{equation}\label{eqn:affine_constraint_intro}
\int_\Omega \Htx u(t,x) dx = \Ft
\end{equation}
for some functions $\eich:[0,1] \times \Omega \to \R^d$ and $\ef:[0,1] \to \R^d$ (for some $d \geq 1$). We say that an affine constraint is \emph{time-independent} if $\ef$ is equal to a constant $\Ce \in \R^d$ and the value of $\Htx$ does not depend on $t$ (in which case we consider it as a function $\eich:\Omega \to \R$), and \emph{time-dependent} otherwise. The \define{unbalanced OT problem with affine constraint} associated to $\eich$ and $\ef$ is then given by the optimization problem \eqref{eqn:unbalanced_OT_intro} subject to the additional constraint \eqref{eqn:affine_constraint_intro}---in fact, the general problem we consider has a slightly more technical formulation which allows comparison of arbitrary positive Radon measures (see Definitions \ref{def:WFR_problem_constant_time} and \ref{def:wfr_problem_time_varying}). In particular, this model encompasses the motivating constrained UOT scenarios described above, as summarized in Figure~\ref{fig:example}.

\begin{figure}[t]
    \centering
    \includegraphics[width=\textwidth]{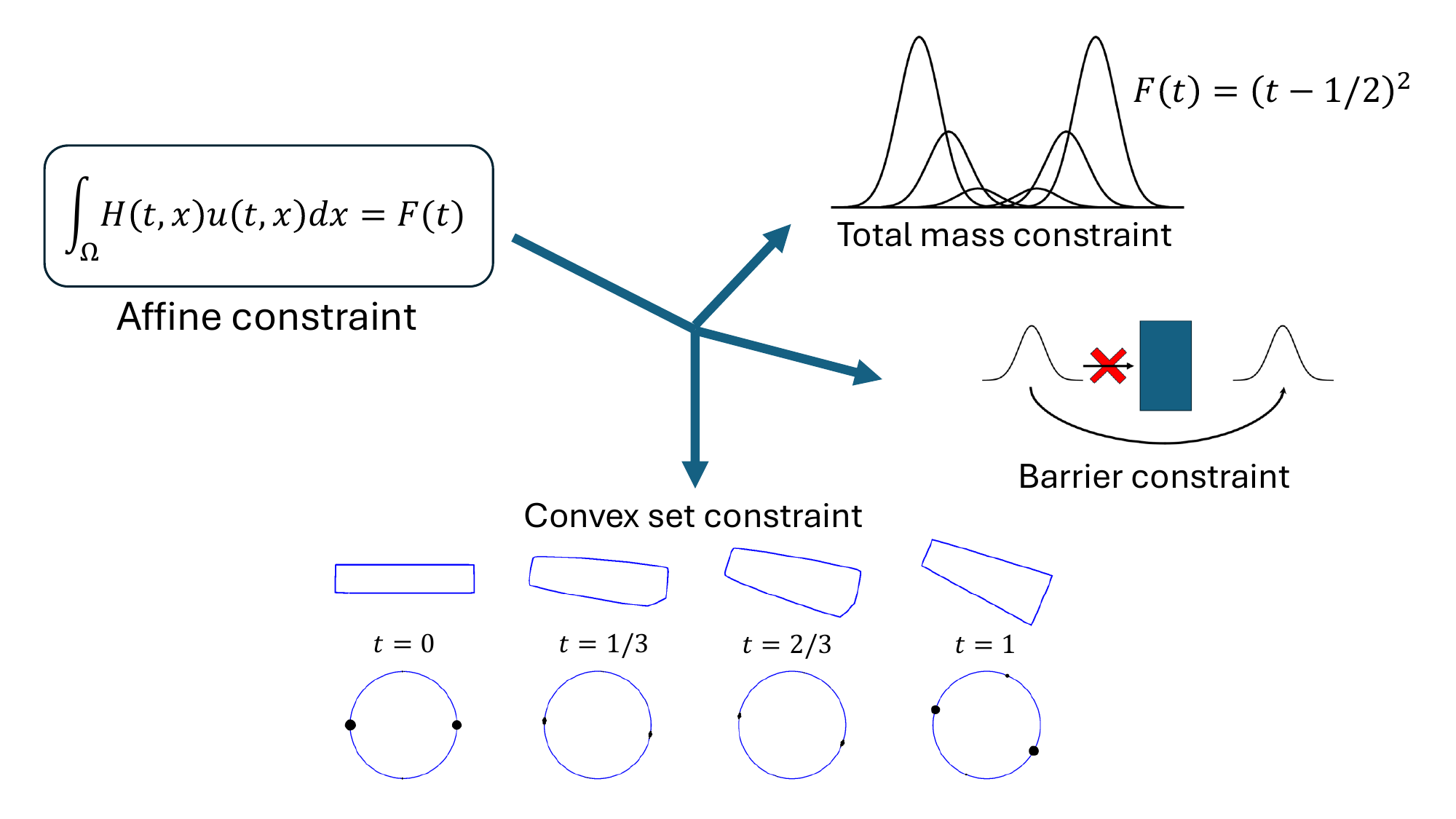}
    \caption{Examples of affine constraint unbalanced OMT models: The total mass constraint corresponds to  $H=1$, and $F(t)$ prescribing the amount of total mass at each time $t$. 
    The spherical Hellinger-Kantorovich distance, as studied in~\cite{laschos2019geometric,jing2024machine}, corresponds to the special case $F(t)=1$. 
   We also show a schematic of the barrier constraint, which disallows the existence of mass in a certain region of the domain $\Omega$. These two constraints are discussed in detail in Example \ref{ex:totalmass_movingbarrier}. The figure for the convex set constraint was adapted from \cite{charon_length_2021}, where the bottom row shows the transportation of Dirac masses on $S^1$ while the top row shows the convex curves associated {with} these measures. This involves a special case of affine constraint enforcing {the} closure of the curves, as discussed in Example \ref{ex:area_measures}.}
    \label{fig:example}
\end{figure}

\paragraph{Main Contributions:}
We now describe our main results. In the following we call $(u,v,w)$ a \emph{finite energy path} from $u_0$ to $u_1$, with respect to $\eich$ and $\ef$, if $u(0,\cdot) = u_0$, $u(1,\cdot) = u_1$, $u$ satisfies the affine constraint \eqref{eqn:affine_constraint_intro} and the continuity equation in~\eqref{eqn:unbalanced_OT_intro}, and the objective of \eqref{eqn:unbalanced_OT_intro} takes a finite value on $(u,v,w)$. We say that $(u,v,w)$ is a \emph{minimal energy path} with respect to $\eich$ and $\ef$ if, in addition, it realizes the infimum in~\eqref{eqn:unbalanced_OT_intro}. These concepts generalize to positive Radon measures, and the statements of our results are given in this context. 

Our first main result gives general conditions which guarantee the existence of a minimizer to the constrained unbalanced optimal transport problem:
\begin{theorem*}[Existence of minimizers; Theorem 
    \ref{thm: AffWFRDuality_time}]
    Let $\rho_0$ and $\rho_1$ be positive Radon measures on $\Omega$, let $\eich:[0,1]\times \Omega \to \mathbb{R}^d$ be a continuous function and let $\ef=(\ef_1,\ldots,\ef_d):[0,1]\to \mathbb{R}^d$ be a function with integrable component functions $\ef_i$. If a finite energy path from $\rho_0$ to $\rho_1$ with respect to $\eich$ and $\ef$ exists, then a minimal energy path exists.
\end{theorem*}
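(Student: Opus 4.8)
The natural route is the direct method of the calculus of variations, carried out at the level of measures rather than densities. The plan is to first recast the problem in its measure-theoretic form: encode a candidate path as a triple $(\rho,m,\zeta)$ consisting of a nonnegative measure $\rho$, a vector measure $m$, and a scalar measure $\zeta$ on $[0,1]\times\Omega$ (playing the roles of $u\,dx\,dt$, $v\,dx\,dt$, $w\,dx\,dt$), and rewrite the objective of \eqref{eqn:unbalanced_OT_intro} as the convex, positively $1$-homogeneous integral functional $\mathcal{E}$ obtained from the perspective function of $(b,c)\mapsto \|b\|^2+\delta^2 c^2$. With the continuity equation read in the distributional sense (with the endpoint data $\rho_0,\rho_1$ built into the admissible test functions) and the affine constraint \eqref{eqn:affine_constraint_intro} phrased weakly, the admissible set is a convex subset of a space of measures and $\mathcal{E}$ is a convex functional on it; this is essentially the content of the definitions preceding the statement (Definitions \ref{def:WFR_problem_constant_time} and \ref{def:wfr_problem_time_varying}).

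I would then run the direct method. Since the hypothesis guarantees a finite energy path, the infimum is finite, so I may fix a minimizing sequence $(\rho^k,m^k,\zeta^k)$ with energies bounded by some $E$. The first substantive step is a priori compactness. Because $\Omega$ is compact no mass can escape to infinity, so it suffices to bound total masses. The endpoint masses $|\rho_0|,|\rho_1|$ are fixed, and a Gr\"onwall-type estimate---using that the time derivative of the total mass equals $\int w\,dx$, controlled via Cauchy--Schwarz through the source part $\delta^2 w^2/u$ of the energy---bounds $t\mapsto|\rho^k_t|(\Omega)$, hence bounds the space-time mass $|\rho^k|([0,1]\times\Omega)$. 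A second Cauchy--Schwarz estimate of the schematic form $|m^k| \le \sqrt{E}\,\sqrt{|\rho^k|}$ (and likewise for $\zeta^k$) bounds the momentum and source masses. Banach--Alaoglu then yields a weak-* convergent subsequence $(\rho^k,m^k,\zeta^k)\rightharpoonup(\rho,m,\zeta)$.

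Two steps remain: closedness of the constraints and lower semicontinuity of the energy. Closedness of the continuity equation and of the endpoint conditions is immediate, since these are linear conditions tested against smooth functions and hence stable under weak-* convergence, and nonnegativity of $\rho$ is likewise preserved. Lower semicontinuity, $\mathcal{E}(\rho,m,\zeta)\le\liminf_k \mathcal{E}(\rho^k,m^k,\zeta^k)$, follows from the standard fact that an integral functional generated by a convex, lower semicontinuous, $1$-homogeneous integrand is weak-* lower semicontinuous (equivalently, write $\mathcal{E}$ as the supremum of the linear functionals in its dual representation). Combining these gives $\mathcal{E}(\rho,m,\zeta)\le\inf$, so the limit is a minimizer once it is shown admissible.

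I expect the affine constraint \eqref{eqn:affine_constraint_intro} to be the delicate point, and this is precisely where the continuity hypothesis on $\eich$ is used. Written weakly, the constraint states that $\int \phi(t)\eich_i(t,x)\,d\rho = \int_0^1 \phi(t)\ef_i(t)\,dt$ for every $\phi\in C([0,1])$ and each component $i$. Since $\eich$ is continuous and $\Omega$ is compact, each map $(t,x)\mapsto\phi(t)\eich_i(t,x)$ is a bounded continuous test function, so the left-hand side passes to the limit and the identity is inherited by $(\rho,m,\zeta)$; using that the time-marginal of $\rho$ is Lebesgue measure (forced by the continuity equation and finite energy) one recovers the pointwise a.e. form of \eqref{eqn:affine_constraint_intro}. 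The essential subtlety is that this argument breaks down for merely measurable $\eich$, since weak-* convergence does not preserve pairings against discontinuous functions; continuity of $\eich$ is thus exactly what makes the path constraint a weak-* closed condition and is the crux distinguishing this result from the unconstrained unbalanced theory. A dual, Fenchel--Rockafellar route, matching the cross-reference to the duality theorem, is an alternative: establish strong duality and extract a primal minimizer from optimal dual variables, in which case the main work shifts to verifying the no-duality-gap and qualification hypotheses.
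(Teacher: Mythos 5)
Your proposal is correct, but it follows a genuinely different route from the paper. The paper never runs the direct method: it proves Theorem \ref{thm: AffWFRDuality_time} by convex duality, applying the Fenchel--Rockafellar theorem (Theorem \ref{thm:Fenchel_Rockafellar}) to a primal problem posed over test functions $x=(\phi,\psi)\in C^1([0,1]\times \Omega)\times C([0,1])^d$, with $A(\phi,\psi)=(\partial_t\phi+\sum_i \eich_i\psi_i,\nabla\phi,\phi)$, $F$ the integral of the indicator of the pointwise convex set $B_\delta$ (whose conjugate is $f_\delta$), and $G$ carrying the boundary terms and the $\int \psi_i \ef_i$ terms. After identifying $Y^*$ with spaces of measures via Riesz--Markov--Kakutani, the conjugate $G^*(-A^*y^*)$ becomes the indicator of $\CEHF$ and $F^*$ becomes the constrained WFR energy (via Rockafellar's theorem on integral functionals), so the dual problem is exactly problem \eqref{eqn: constWFR_affine}, and the attainment clause of Fenchel--Rockafellar---applicable because the finite-energy-path hypothesis makes the common value finite---is precisely the existence of a minimal energy path. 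Your direct method instead proves existence primally: mass bounds via Cauchy--Schwarz/Gr\"onwall, weak-$*$ compactness, lower semicontinuity of the $1$-homogeneous integrand, and weak-$*$ closedness of the constraints, with continuity of $\eich$ entering exactly where you say it does (it plays the parallel role in the paper of making $A$ a bounded map into continuous functions). The two routes share the same convex-analytic core---the conjugacy $f_\delta=\iota_{B_\delta}^*$ reappears in your lower-semicontinuity step as the dual representation of the energy---but the duality proof is shorter given the machinery and also produces the dual variables that the paper later exploits for its optimality certification (Theorem \ref{thm:opt_certificate}), whereas your argument is more elementary and self-contained. One minor imprecision to fix: the time marginal of the limiting $\rho$ is not Lebesgue measure but only absolutely continuous with respect to it (its density is $t\mapsto\rho_t(\Omega)$); this weaker fact, supplied by Proposition \ref{prop: disintegration in time} applied to the limit triple, is all that your passage from the weak form of the constraint to its a.e.\ disintegrated form requires.
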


We give several classes of functions $\eich$ and $\ef$ which guarantee the existence of finite (hence minimal) energy paths in Theorems \ref{thm: Affh0 Finite Energy Path_time_constant}, \ref{thm:general_finite_energy_existence} and 
\ref{prop:Affh0_Finite_Energy_Path}. The statements of the results for time-dependent functions are somewhat technical, so we only state here the theorem in the case of time-independent constraints. For $\eich:\Omega \to \R^d$ and $\Ce \in \R^d$, let $\MplushC$ denote the space of positive Radon measures $\rho$ satisfying $\int_\Omega \eich \; d\rho = \Ce$.

\begin{theorem*}[Existence of finite energy paths; Theorem \ref{thm: Affh0 Finite Energy Path_time_constant}]
    Let $\eich: \Omega \to \mathbb{R}^d$ be a continuous function, let $C \in \R^d$ and let $\rho_0,\rho_1 \in \MplushC$. Assume, in addition, that one of the following holds:
    \begin{enumerate}
     \item $\Omega$ is convex, $d=1$ and $\eich\in C^1(\Omega,[c,\infty))$ for some $c>0$,
    \item $C=0$, or 
    \item there exists a density $\bar \rho$ with respect to which the Radon-Nikodym derivative of both $\rho_0$ and $\rho_1$ is a continuous and positive function, i.e., $\frac{\rho_i}{\bar \rho}\in C(\Omega,\mathbb R_{>0})$, {where $\mathbb R_{>0}$ denotes the set of strictly positive real numbers.}
    \end{enumerate}
     Then there is a finite energy path from $\rho_0$ to $\rho_1$ with respect to $\eich$ and the constant function $\ef = C$. 
\end{theorem*}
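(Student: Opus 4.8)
The plan is to construct, in each of the three cases, an explicit finite energy path; since optimality is not required, it suffices to exhibit one admissible path with finite objective value. The guiding tension is that the affine constraint \eqref{eqn:affine_constraint_intro} is preserved by \emph{linear} interpolation of measures (being affine in $u$), whereas finiteness of the Fisher--Rao part of the energy for mutually singular or atomic measures favors \emph{square-root-linear} (Hellinger) interpolation; these two demands conflict, and each hypothesis (1)--(3) supplies a different way to reconcile them.

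For case (3), I would take the common reference $\bar\rho$ and the continuous positive densities $f_i = \rho_i/\bar\rho$, and use the pure-growth path $u(t) = \bigl((1-t)f_0 + t f_1\bigr)\bar\rho$, with $v\equiv 0$ and $w = \partial_t u = (f_1 - f_0)\bar\rho$. Positivity is automatic (a convex combination of positive densities), and by continuity on the compact $\Omega$ the density is bounded below by $c_0 := \min(\min f_0,\min f_1) > 0$, so the Fisher--Rao energy $\delta^2\int_0^1\int_\Omega \frac{(f_1-f_0)^2}{(1-t)f_0 + t f_1}\,d\bar\rho\,dt$ is dominated by $\delta^2 c_0^{-1}\int_\Omega (f_1-f_0)^2\,d\bar\rho < \infty$. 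The constraint holds for every $t$ because it is affine and both endpoints satisfy it.

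For case (2), the value $C = 0$ makes the constraint invariant under multiplication of $u$ by a nonnegative scalar, since $\int_\Omega \eich\,(\phi^2\rho_i) = \phi^2 C = 0$. I would therefore annihilate $\rho_0$ and then create $\rho_1$ using Hellinger geodesics to the zero measure: set $u(t) = (1-2t)^2\rho_0$ on $[0,\tfrac12]$ and $u(t) = (2t-1)^2\rho_1$ on $[\tfrac12,1]$, with $v\equiv 0$ and $w = \partial_t u$. The constraint is identically $0$ throughout, the two halves match at the zero measure, and the energy is a sum of two Hellinger energies of the form $4\delta^2\lvert\rho_i\rvert\int_0^1 (\phi')^2$, which are finite for arbitrary Radon measures.

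Case (1) is the main obstacle, since here $C$ may be nonzero and the $\rho_i$ may be singular, so neither previous device applies. The key idea I would use is to weight by $\eich$: because $d=1$ and $\eich\ge c>0$, the map $u\mapsto \tilde u := \eich\,u$ is a bijection onto positive measures, and $\int_\Omega \eich\,u = C$ becomes the statement that $\tilde u$ has fixed total mass $C$. I would then transport $\tilde\rho_0 = \eich\rho_0$ to $\tilde\rho_1 = \eich\rho_1$ along the Wasserstein geodesic $(\tilde u,\tilde v)$ with $\partial_t\tilde u + \nabla\cdot\tilde v = 0$; this is mass-preserving and, since $\Omega$ is compact, has finite Benamou--Brenier energy, while convexity of $\Omega$ guarantees the displacement interpolation stays inside $\Omega$. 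Pulling back via $u = \tilde u/\eich$ and $v = \tilde v/\eich$ produces a source term $w = -\eich^{-2}\,\tilde v\cdot\nabla\eich$, and one checks $\frac{d}{dt}\int_\Omega \eich\,u = -\int_\Omega \nabla\cdot\tilde v = 0$, so the weighted mass remains $C$. The bounds $c \le \eich \le \lVert\eich\rVert_\infty$ and $\lVert\nabla\eich\rVert_\infty < \infty$ (using $\eich\in C^1$ on compact $\Omega$) make both $\frac{\lVert v\rVert^2}{u}$ and $\frac{w^2}{u}$ pointwise dominated by a constant multiple of $\frac{\lVert\tilde v\rVert^2}{\tilde u}$, so the total energy is bounded by a constant times $W_2(\tilde\rho_0,\tilde\rho_1)^2 < \infty$. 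The step requiring the most care is justifying this pullback and the energy estimate at the level of possibly singular measures, rather than smooth densities, where each computation must be interpreted in the weak, measure-theoretic Benamou--Brenier sense.
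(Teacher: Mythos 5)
Your proposal is correct and takes essentially the same approach as the paper in all three cases: linear interpolation with zero momentum for case (3), concatenated Hellinger (Fisher--Rao) geodesics through the zero measure for case (2), and, for case (1), conjugating a balanced Wasserstein path between the $\eich$-weighted measures by $1/\eich$ with the source term $-\eich^{-2}\,\tilde v\cdot\nabla\eich$ absorbing the weight --- which is exactly the construction in the time-varying results (Theorems \ref{thm:general_finite_energy_existence} and \ref{prop:Affh0_Finite_Energy_Path}) that the paper invokes for cases (1) and (2). The only difference is presentational: you instantiate these constructions directly in the time-independent setting, whereas the paper defers to its more general time-dependent theorems.
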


In the time-independent case, we further show that the constrained unbalanced OT problem leads to a metric and describe the corresponding geometric picture, including geodesic equations in the smooth positive density setting (Section \ref{sec:constrained_WFR_distance}) as well as an analogue of Otto's Riemannian submersion formalism (Section~\ref{ssec:Otto_picture}): 
\begin{theorem*}[Theorem \ref{thm:constrained_WFR_metric}] Let ${H}:\Omega \to \mathbb{R}^d$ and $\Ce\in \mathbb{R}^d$ be such that there is a finite energy path between any two measures in $\MplushC$. Then, the constrained unbalanced OT problem induces a metric on $\MplushC$, and this metric can be interpreted as the geodesic distance of a (formal) Riemannian metric. 
\end{theorem*}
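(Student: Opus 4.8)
The plan is to set the candidate distance to be
$\WFRD(\rho_0,\rho_1) := \bigl(\inf E\bigr)^{1/2}$,
where the infimum of the unbalanced energy
$E(u,v,w) = \int_0^1\!\int_\Omega \frac{\|v\|^2 + \delta^2 w^2}{u}\,dx\,dt$
(in its measure-theoretic Benamou--Brenier form) is taken over all finite energy paths $(u,v,w)$ from $\rho_0$ to $\rho_1$ whose time-slices $\rho_t$ satisfy the time-independent affine constraint $\int_\Omega \eich\,d\rho_t = \Ce$ for every $t$. By the standing hypothesis such paths exist between any two elements of $\MplushC$, so the infimum is finite and nonnegative, and an actual minimizer is furnished by Theorem~\ref{thm: AffWFRDuality_time}. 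It then remains to verify the three metric axioms on $\MplushC$: symmetry, the triangle inequality, and the identity of indiscernibles.

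Symmetry follows by time reversal: if $(u,v,w)$ is admissible from $\rho_0$ to $\rho_1$, then $\tilde u(t,\cdot)=u(1-t,\cdot)$, $\tilde v=-v(1-t,\cdot)$, $\tilde w=-w(1-t,\cdot)$ is admissible from $\rho_1$ to $\rho_0$ with the identical energy; the key point is that, because the prescribed constraint value $\Ce$ is constant in time, the reversed path still satisfies $\int_\Omega \eich\,d\tilde\rho_t = \Ce$ for all $t$. For the triangle inequality I would pass to the length formulation: writing the metric speed $\ell_t = \left(\int_\Omega \frac{\|v\|^2+\delta^2 w^2}{u}\,dx\right)^{1/2}$, Cauchy--Schwarz gives $\bigl(\int_0^1 \ell_t\,dt\bigr)^2 \le E$ with equality for constant-speed parametrizations, so that $\WFRD(\rho_0,\rho_1) = \inf \int_0^1 \ell_t\,dt$ over admissible paths. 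The length is invariant under increasing reparametrizations $t\mapsto\phi(s)$ (which rescale $v,w$ by $\phi'$ and leave the pointwise constraint untouched) and additive under concatenation. Given paths realizing the distances $\rho_0\to\rho_1$ and $\rho_1\to\rho_2$ up to $\varepsilon$, I reparametrize them onto $[0,\tfrac12]$ and $[\tfrac12,1]$ and glue; the result is admissible from $\rho_0$ to $\rho_2$ precisely because each piece satisfies the constraint pointwise and the two pieces match at $\rho_1 \in \MplushC$, and its length is the sum of the two lengths. Sending $\varepsilon\to 0$ gives the triangle inequality.

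For the identity of indiscernibles, the constant path yields $\WFRD(\rho,\rho)=0$. For the converse I would exploit that imposing the affine constraint only shrinks the admissible set of paths, so the constrained energy dominates the unconstrained Wasserstein--Fisher--Rao energy, whence $\WFRD(\rho_0,\rho_1) \ge d_{\WFR}(\rho_0,\rho_1)$. Since $d_{\WFR}$ is already known to be a genuine metric on positive Radon measures~\cite{chizat2018interpolating,liero2018optimal} and $\rho_0,\rho_1 \in \MplushC \subset \Mplus(\Omega)$, vanishing of $\WFRD$ forces $d_{\WFR}(\rho_0,\rho_1)=0$ and hence $\rho_0=\rho_1$. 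This establishes that $\WFRD$ is a metric on $\MplushC$.

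For the Riemannian interpretation I would invoke the formal Otto submersion picture for WFR from Section~\ref{ssec:Otto_picture}: on $\Mplus(\Omega)$ the WFR energy is the squared geodesic distance of the formal Riemannian metric $\|\dot\rho\|_\rho^2 = \inf\{\,\int_\Omega(\|v\|^2+\delta^2 w^2)\,d\rho \ :\ \dot\rho + \nabla\cdot(\rho v) = \rho w\,\}$. The affine constraint cuts out $\MplushC$ as a formal submanifold whose tangent space at $\rho$ is the linearized constraint $\{\dot\rho : \int_\Omega \eich\,d\dot\rho = 0\}$, and I equip $\MplushC$ with the metric obtained by restricting $\|\cdot\|_\rho$ to these tangent vectors. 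By construction the induced length of a path in $\MplushC$ is exactly the constrained length functional above, so its geodesic distance equals $\WFRD$, exhibiting the constrained distance as the base metric of the Otto-type submersion restricted over $\MplushC$. I expect the main obstacle to be the rigorous passage from the energy infimum to the length infimum, i.e.\ producing a constant-speed reparametrization of a near-minimizing path in the weak/measure-valued setting where the density may degenerate; this underlies both the triangle inequality and the energy--length identity, whereas the submanifold geometry enters only formally and the remaining axioms are routine once constraint preservation under reversal, reparametrization, and gluing is observed.
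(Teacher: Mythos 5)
Your proposal is correct and follows essentially the same route as the paper: symmetry by time reversal, identity of indiscernibles by comparison with the unconstrained WFR metric, the triangle inequality via the energy--length (constant-speed) reformulation plus concatenation of near-optimal paths, and the Riemannian interpretation by restricting the formal WFR metric to the affine submanifold $\MplushC$. The only cosmetic differences are that the paper concatenates exact minimizers (available from the existence theorem) rather than $\varepsilon$-approximations, and it handles the energy--length identity you flag as the main obstacle by citing its adaptation of Theorem 5.4 of \cite{Dolbeault_2008} rather than proving it.
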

Lastly, in Section~\ref{sec:manifolds}, we extend both the model and the existence results to the case, where the domain is instead a Riemannian manifold. For the proof of the existence of minimizers result, cf. Theorem~\ref{thm:AffWFRDuality_time_M}, we make the assumption that the manifold is parallelizable; the corresponding existence of finite energy path results translate without any additional assumption to the manifold situation. 

% In addition to these analytic results, we generalize in Section~\ref{sec:constrained_WFR_distance} the geometric interpretation of UOT to the constrained situation; namely for the case of static constraints we show that an equivalent of Otto's Riemannian submersion  exists, which allows us to define a Monge formualtion of the corresponding density registration problem.  

\paragraph{Future Directions:} This paper is primarily focused on the fundamental theoretical aspects of the constrained model. Our framework suggests a computational scheme based on the convexity of the formulation, and we intend to develop such a numerical implementation in future work. We also plan to extend our model to incorporate other variants of unbalanced transport. While we specifically focus on a constrained version of the Wasserstein-Fisher-Rao distance (see Definition~\ref{def:wfr_distance}), there are other dynamic unbalanced optimal transport distances which should be amenable to constrained formulations. These include the various distances described in~\cite{chizat2018interpolating} or the recently introduced \emph{simple unbalanced optimal transport} model of~\cite{khesin2023simple}. The latter model could be interesting in that it may allow for a more explicit description of the corresponding constrained geometric picture. Although we were primarily interested here in the situation of constraints on the measure path only, one could more generally introduce similar affine constraints on the vector field and source term instead. We leave to future investigations the study of how the existence results and proofs of this work can adapt to this alternative setting. Lastly, while affine equality constraints are being considered here, generalizing this framework to convex inequality constraints is another interesting path with the potential to greatly extend the scope of applications of this model. 

\paragraph{Acknowledgements:}
M. Bauer and M. Nishino were partially supported by NSF grant CISE-2426549. T.~Needham was partially supported by NSF grants DMS-2107808 and DMS-2324962. N. Charon was partially supported by NSF grants DMS-2402555 and DMS-2438562.

\section{Preliminaries}
% \todomartin{I would probably add a short subsection where we review UOT and the WFR metric. (Mao: Done, although it isn't really short. I moved most of the content from former section 3 to this section.)}

Let us begin by setting notation that will be used globally throughout the paper; more specialized notation will be introduced as needed. We will always use $\Omega \subset \R^n$ to denote a fixed compact set with $C^1$ boundary. For certain results, we also require $\Omega$ to be convex, but this condition is not generally assumed. For a space $X$, we use $\Meas(X)$ to denote the set of Radon measures and $\Meas^+(X)$ to denote the set of positive Radon measures on $X$. We generally use standard notation for concepts from analysis and measure theory without additional explanation; e.g., $\mu_0 \ll \mu_1$ if a measure $\mu_0$ is absolutely continuous with respect to a measure $\mu_1$.

The rest of this subsection fills in details on the background topics of optimal transport that were described in the introduction.

\subsection{Dynamic optimal transport and Fisher-Rao distance.} The standard Kantorovich formulation of optimal transport compares probability measures $\rho_0$ and $\rho_1$ on $\Omega \subset \R^n$ by solving the optimization problem
\begin{equation}\label{eqn:kantorovich_ot}
\inf_{\gamma \in \Gamma(\rho_0,\rho_1)} \int_{\Omega \times \Omega} \|x- y\|^2 d\gamma(x,y),
\end{equation}
where $\Gamma(\rho_0,\rho_1)$ is the set of probability measures on $\Omega \times \Omega$ with marginals $\rho_0$ and $\rho_1$, respectively. More generally, one can consider other costs in the integrand, or even work with measures over general metric spaces, but the formulation provided here is sufficient to illustrate the idea. The square root of \eqref{eqn:kantorovich_ot} defines a metric on the space of measures, referred to as the \define{Wasserstein distance}. 

A classic result of optimal transport theory due to Benamou and Brenier~\cite{benamou_computational_2000} explains how the Kantorovich problem \eqref{eqn:kantorovich_ot} can be recast in a dynamical framework. Suppose that each measure $\rho_i$ has density $u_i:\Omega \to \R$ with respect to {the} Lebesgue measure. We consider a path of densities $u:[0,1] \times \Omega \to \R$, together with a time-dependent vector field $v:[0,1] \times \Omega \to \R^n$; we assume that all maps are sufficiently smooth in time and space. The Benamou-Brenier result~\cite[Proposition 1.1]{benamou_computational_2000} says that solving \eqref{eqn:kantorovich_ot} is equivalent to solving
\begin{equation}\label{eqn:dynamic_ot}
\inf_{u,v} \int_0^1 \int_\Omega \frac{\|v(t,x)\|^2}{u(t,x)} \; dx \, dt,
\end{equation}
subject to the \define{continuity equation}:
\begin{equation}\label{eqn:continuity_equation}
\partial_t u + \nabla \cdot v = 0, \qquad u(0,\cdot) = u_0, \; u(1,\cdot) = u_1.
\end{equation}

Another approach to metrizing the space of densities is given by the Fisher-Rao distance. This metric was originally introduced to compare elements of finite-dimensional submanifolds of probability densities in the context of statistics~\cite{radhakrishna1945information}, but has since been extended to allow comparison of more general densities (see, e.g.,~\cite{friedrich1991fisher,bauer2016uniqueness}). Given two positive densities $u_0,u_1:\Omega \to \R$, the \define{Fisher-Rao distance} between them is given by
\begin{equation}\label{eqn:fisher_rao_distance}
\inf_{u,w} \int_0^1 \int_\Omega \frac{w(t,x)^2}{u(t,x)} \; dx \, dt, \qquad \mbox{subject to} \quad \partial_t u = w, \quad u(0,\cdot) = u_0, \; u(1,\cdot) = u_1,
\end{equation}
where the infimum is over sufficiently smooth paths of densities $u$ and functions $w:[0,t] \times \Omega \to \R$. There is an apparent similarity of structure between the Fisher-Rao distance and the dynamical formulation of OT; the connection between these metrics is elucidated below. 

\subsection{Unbalanced dynamic optimal transport and the Wasserstein-Fisher-Rao distance.}
A dynamical formulation of \emph{unbalanced} optimal transport---i.e., where the measures being compared are allowed to have different total mass---was introduced in \cite{chizat2018interpolating}. The idea is to relax the continuity equation \eqref{eqn:continuity_equation} in the Benamou-Brenier formulation of optimal transport by including a \emph{source term} which allows the creation and destruction of mass, and to then incorporate the source into the objective function \eqref{eqn:dynamic_ot}. The modified continuity equation is
\begin{equation}\label{eqn:modified_continuity_eqn}
    \partial_t u + \nabla \cdot v = w,
\end{equation}
where $w:[0,1] \times \Omega \to \R$ is the source term, and the modification of the objective function is to be defined. This not only allows one to compare positive measures with different mass (a useful extension in applications), but also frequently results in more natural interpolations between densities---see the recent survey~\cite{sejourne2022unbalanced} for an in-depth discussion and several examples.

Working with the continuity equation in the form \eqref{eqn:modified_continuity_eqn} necessarily restricts our focus to measures $\rho$ with a differentiable density. Following \cite{chizat2018interpolating}, this constraint can be relaxed by working instead with a weak (i.e., distributional) form of the equation.

\begin{definition}[Distributional Continuity Equation]
Let $\rho_0,\rho_1 \in \Mplus(\Omega)$. We say that a triple $(\rho,\omega,\zeta)\in \Mplus([0,1]\times \Omega)\times \Meas([0,1]\times \Omega)^n\times \Meas([0,1]\times \Omega)$ satisfies the \define{continuity equation in the distributional sense} with boundary conditions $\rho_0$ and $\rho_1$ if, for any $\phi \in C^1([0,1]\times \Omega)$, 
\begin{equation}\label{eqn:dist_continuity_equation}
    \int_{[0,1]\times \Omega} \partial _t \phi d\rho + \int_{[0,1]\times \Omega} \nabla \phi \cdot d\omega  +\int_{[0,1]\times \Omega}\phi  d\zeta = \int_{\Omega}\phi(1,\cdot)d\rho_1 - \int_{\Omega}\phi(0,\cdot)d\rho_0 
\end{equation}
We denote by $\mathcal{CE}(\rho_0,\rho_1)$ the set of triples $(\rho,\omega,\zeta)$ satisfying the continuity equation with boundary conditions $\rho_0,\rho_1$.
\end{definition}

The unbalanced optimal transport distance of \cite{chizat2018interpolating} is then defined as follows. 

\begin{definition}[WFR distance]\label{def:wfr_distance}
    For $\delta>0$, the \define{Wasserstein-Fisher-Rao infinitesimal cost} is the function $f_\delta:\mathbb{R}\times \mathbb{R}^n\times \mathbb{R} \to [0,+\infty]$ on $\Mplus(\Omega)$ defined by 
    \begin{equation}
        f_{\delta}(\rho,\omega,\zeta) := \begin{cases}
            \frac{\|\omega\|^2+\delta^2\zeta^2}{2\rho}, & \rho>0 \\ 
            0, & (\rho,\omega,\zeta)=(0,0,0) \\
            +\infty, & \textrm{otherwise.} 
        \end{cases}
    \end{equation}
    The associated \define{Wasserstein-Fisher-Rao (WFR) distance} is then defined by
    \begin{equation}\label{eqn:WFR_distance}
    \mathrm{WFR}_\delta(\rho_0,\rho_1)^2 := \inf \left\{ \int_{[0,1]\times \Omega} f_\delta\left(\frac{d\mu}{d\lambda}\right)d \lambda \middle| 
    \begin{aligned}
         \mu = (\rho,\omega,\zeta)\in \mathcal{CE}(\rho_0,\rho_1) 
    \end{aligned}
    \right\},
\end{equation}
where $\lambda$ is any nonnegative measure on $[0,1]\times \Omega$ such that $\rho,\omega,\zeta\ll \lambda$ and $\frac{d\mu}{d\lambda} := (\frac{d\rho}{d\lambda}, \frac{d\omega}{d\lambda}, \frac{d\zeta}{d\lambda})$. We note that the value does not depend on the choice of $\lambda$ due to the 1-homogeneity of $f_{\delta}$. {To see this, we let $|\mu|$ be the total variation  of the vector measure $\mu=(\rho,\omega,\zeta)$. Then we have $|\mu| \ll \lambda$ and therefore
\begin{equation}
    \int_{[0,1]\times \Omega}f_\delta\left(\frac{d\mu}{d\lambda}\right) d\lambda = \int_{[0,1]\times \Omega}f_\delta\left(\frac{d\mu}{d|\mu| } \frac{d|\mu|}{d\lambda}\right) d\lambda = \int_{[0,1]\times \Omega}f_\delta\left(\frac{d\mu}{d|\mu| } \right) \frac{d|\mu|}{d\lambda}d\lambda = \int_{[0,1]\times \Omega}f_\delta\left(\frac{d\mu}{d|\mu| } \right) d|\mu|,
\end{equation}
which is independent of $\lambda$.}
\end{definition}

\begin{remark}
    The Wasserstein-Fisher-Rao infinitesimal cost belongs to a more general family of infinitesimal costs defined in \cite{CHIZAT20183090}; one can generalize the definition of WFR distance by replacing $f_\delta$ with another infinitesimal cost. We restrict our attention to that particular $f_\delta$ because it is natural in the sense that it interpolates between well-known distances on the space of measures: it is (a constant multiple of) the Wasserstein distance when $\delta = 0$ and $\frac{1}{\delta^2}\mathrm{WFR}_\delta$ limits to the Fisher-Rao distance as $\delta \rightarrow \infty$.
\end{remark}

% Now that our time-dependent distribution is defined as a measure on $[0,1]\times \Omega$, it might seem hard to point out what the "distribution at time $t$" is.  The following theorem is useful to resolve this issue.

Considering $\rho$ as an element of $\mathcal{M}^+([0,1] \times \Omega)$ apparently loses the dynamical interpretation of $\rho$ as a ``path of measures'', but this interpretation is recovered via the concept of disintegration. Recall that a \define{disintegration} of a measure $\nu$ on $[0,1] \times \Omega$ is a Borel family of measures $\{\nu_t\}_{t \in [0,1]}$ such that $d\nu(t,x) = dt \otimes d\nu_t(x)$. We will write in short $\nu = dt \otimes \nu_t$. Then the following holds:

\begin{proposition}
    \label{prop: disintegration in time}
     Let $(\rho,\omega,\zeta) \in\CE$. Then $\rho$ is disintegrable: $\rho=dt \otimes \rho_t$ for a Borel family of measures $\{\rho_t\}_{t}$. If we assume further that  $\omega \ll \rho$ (coordinate-wise) and $\zeta \ll \rho$ hold, we have the disintegration of $\omega$ and $\zeta$: $\omega = dt \otimes \omega_t, \zeta = dt\otimes \zeta_t$. The absolute continuity conditions hold whenever the objective of~\eqref{eqn:WFR_distance} is finite so that disintegrability holds in our setting without loss of generality.
\end{proposition}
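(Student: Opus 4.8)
The plan is to reduce everything to a one-dimensional statement in the time variable by testing the distributional continuity equation \eqref{eqn:dist_continuity_equation} against functions depending only on $t$. For the first claim the key point is to show that the time-marginal $m := (\pi_t)_*\rho$, where $\pi_t:[0,1]\times\Omega\to[0,1]$ is the projection, is absolutely continuous with respect to Lebesgue measure $dt$; once this is known, the disintegration $\rho = dt\otimes\rho_t$ follows from the standard disintegration theorem for Radon measures after absorbing the (bounded) time-density into the fibre measures $\rho_t$.

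First I would substitute $\phi(t,x)=\psi(t)$ with $\psi\in C^1([0,1])$ into \eqref{eqn:dist_continuity_equation}. Since $\nabla\phi=0$, the $\omega$-term drops out and, writing $z:=(\pi_t)_*\zeta$, one obtains $\int_0^1 \psi'\,dm + \int_0^1 \psi\,dz = \psi(1)\rho_1(\Omega)-\psi(0)\rho_0(\Omega)$ for all such $\psi$. Writing $g:=\psi'$, which ranges over all of $C^0([0,1])$, and recovering $\psi(t)=\psi(0)+\int_0^t g$, a comparison of the coefficients of the free constant $\psi(0)$ yields the mass balance $z([0,1])=\rho_1(\Omega)-\rho_0(\Omega)$, while a Fubini computation on the remaining term, $\int_0^1\big(\int_0^t g(s)\,ds\big)\,dz(t)=\int_0^1 g(s)\,z([s,1])\,ds$, turns the identity into $\int_0^1 g\,dm = \int_0^1 g(s)\,[\rho_1(\Omega)-z([s,1])]\,ds$ for every $g\in C^0$. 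Since $\zeta$ is a finite Radon measure, $z$ is finite and the bracketed function is bounded; as $g$ is arbitrary, this forces $dm = [\rho_1(\Omega)-z([s,1])]\,ds$, i.e. $m\ll dt$, which is exactly what is needed. This absolute-continuity step is the heart of the argument and the one place where care is required (note in particular that it uses only that $\zeta$ is a measure, and hence no finiteness of the energy); the remaining parts are routine.

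For the second claim I would use the assumptions $\omega\ll\rho$ and $\zeta\ll\rho$ directly. Writing $\alpha:=\frac{d\omega}{d\rho}\in\R^n$ and $\beta:=\frac{d\zeta}{d\rho}$ for the Radon--Nikodym densities and inserting the disintegration $d\rho(t,x)=dt\,d\rho_t(x)$ from the first part, one gets $d\omega(t,x)=\alpha(t,x)\,dt\,d\rho_t(x)$ and $d\zeta(t,x)=\beta(t,x)\,dt\,d\rho_t(x)$; setting $\omega_t:=\alpha(t,\cdot)\rho_t$ and $\zeta_t:=\beta(t,\cdot)\rho_t$ then gives $\omega=dt\otimes\omega_t$ and $\zeta=dt\otimes\zeta_t$, with Borel dependence on $t$ inherited from the measurability of $\alpha,\beta$ and of $t\mapsto\rho_t$.

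Finally, for the last assertion I would observe that finiteness of the objective in \eqref{eqn:WFR_distance} forces the absolute-continuity hypotheses. Choosing any dominating measure $\lambda$, the integrand $f_\delta(\frac{d\mu}{d\lambda})$ equals $+\infty$ on the set where $\frac{d\rho}{d\lambda}=0$ but $\frac{d\omega}{d\lambda}\neq0$ or $\frac{d\zeta}{d\lambda}\neq0$, so finiteness of the integral forces this set to be $\lambda$-null, which is precisely $\omega\ll\rho$ and $\zeta\ll\rho$. Hence in the finite-energy regime the hypotheses of the second claim are automatic, and the full disintegration of $(\rho,\omega,\zeta)$ holds without loss of generality.
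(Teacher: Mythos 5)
Your proof is correct and follows essentially the same route as the paper's: test the continuity equation against functions depending only on $t$, use Fubini to exhibit an explicit Lebesgue density for the time-marginal of $\rho$, and then invoke the disintegration theorem (your variant keeps the boundary terms rather than using compactly supported test functions, and so reads off the density directly instead of passing through the identity between the distributional derivative of $\pi_*\rho$ and $\pi_*\zeta$, but the substance is identical). You additionally spell out the disintegration of $\omega,\zeta$ via Radon--Nikodym densities and the fact that finite energy forces $\omega,\zeta\ll\rho$, both of which the paper leaves implicit; these arguments are also correct.
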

\begin{proof}
This result can be deduced from the disintegration theorem found in~\cite[Theorem A.1]{Bredies_2020}, but we still write the proof for completeness. Let us first take a function $\phi:[0,1]\to \mathbb{R}$, which is $C^1$ and compactly supported on $(0,1)$, and define $\tilde{\phi} = \phi \circ \pi \in C^1([0,1]\times \Omega)$ where $\pi: [0,1]\times \Omega \to [0,1]$ is the projection onto $[0,1]$. Substitute $\tilde{\phi}$ to the continuity equation to obtain
\begin{equation}
    \int_{[0,1]\times \Omega}\partial_{t}\tilde{\phi} d\rho + \int_{[0,1]\times \Omega}\tilde{\phi} d\zeta = 0
\end{equation}
By the standard fact about pushforward measures, we have
\begin{equation}
    \int_{0}^{1}\partial_{t}\phi d(\pi_*\rho) + \int_{0}^{1}\phi d(\pi_*\zeta)= 0
\end{equation}
This equation implies that the distributional derivative of $\pi_* \rho$ equals $\pi_* \zeta$. Now, we define a function on $[0,1]$ by $u(t) = \int_{0}^{t}d (\pi_*\zeta)$. Then, for any open interval $A\subset [0,1]$, by Fubini's theorem,
\begin{align}
    \int_{A}u(t)dt &= \int_{t=0}^{t=1} 1_{A}(t)u(t)dt=\int_{t=0}^{t=1}\int_{s=0}^{s=1} 1_{A}(t)1_{[0,t]}(s)d(\pi_*\zeta)dt =\int_{s=0}^{s=1}\int_{t=0}^{t=1} 1_{A}(t)1_{[s,1]}(t)dtd(\pi_*\zeta)\\  &= \int_{s=0}^{s=1}\int_{t=s}^{t=1} 1_{A}(t)dtd(\pi_*\zeta)
\end{align}
The function $v(s) = \int_{t=s}^{t=1}1_{A}(t)dt$ is generally not a compactly supported $C^1((0,1))$ function but can be approximated by one since smooth bump functions can approximate the indicator. Therefore, by applying the fact that the distributional derivative of $\pi_*\rho$ is $\pi_* \zeta$,
\begin{align}
    \int_{s=0}^{s=1}\int_{t=s}^{t=1} 1_{A}(t)dtd(\pi_*\zeta) = \int_{s=0}^{s=1}1_{A}(s)d(\pi_*\rho) = \pi_*\rho(A)
\end{align}
Thus, $u$ is a density function of $\pi_*\rho$, and thus $\pi_* \rho$ is absolutely continuous with respect to the Lebesgue measure. Applying the disintegration theorem, we obtain the proposition.

\end{proof}

In light of this result, we frequently abuse terminology and refer to $\mu$, $\rho$, $\omega$, and $\zeta$ as \emph{paths of measures}, or simply as \emph{paths}.

\newenvironment{thisnote}{\par\color{blue}}{\par}

\section{The constrained Wasserstein-Fisher-Rao distance}
We will first focus on a version of constrained, unbalanced optimal transport, which leads to a constrained variant of the Wasserstein-Fisher-Rao distance \eqref{eqn:WFR_distance}. We start by defining the following constrained space of measures:
\begin{definition}[Constrained space] 
Given $\eich: \Omega\to\mathbb R^d$ and $\Ce\in \mathbb R^d$, for some integer $d \geq 1$, let $\MplushC$ be  the set  of all measures satisfying the affine constraint described by $\eich$ and $\Ce$, i.e.,  
\begin{equation}
    \MplushC := \left\{\rho \in \Mplus(\Omega)\middle| \int_{\Omega} \eich \; d\rho = \Ce\right\}.
\end{equation}
\end{definition}
Using this definition, it is straightforward to consider the corresponding constrained, unbalanced optimal transport problem:
% \todotom{Below is a proposal for some streamlined notation. I'll leave the old definition for now, in case there are any objections.}
\begin{definition}[Affine-constrained WFR problem]\label{def:WFR_problem_constant_time}
For a function $\eich:\Omega \to \R^d$ and a constant $C\in \R^d$, we say that a measure $\rho \in \Mplus([0,1] \times \Omega)$ \define{satisfies the affine constraint} with respect to $\eich$ and $C$ if it disintegrates in time ($\rho = dt \otimes \rho_t$) and 
$\rho_t \in \MplushC$ for all $t$. 
In this case, we write $\rho \in \AHC$. For $\rho_0, \rho_1\in \Mplus(\Omega)$, we consider the constrained set of measures
\[
\CEHC := \left\{\mu = (\rho,\omega,\zeta) \in \CE) \mid \rho \in \AHC\right\}.
\]
We define the  \textbf{affine-constrained Wasserstein-Fisher-Rao energy} with respect to $H$ and $C$ by
\begin{equation}
\label{eqn: constWFR_affine_time_constant}
\WFRconst\rhosq = \inf \left\{ \int_{[0,1]\times \Omega} f_\delta\left(\frac{d\mu}{d\lambda}\right)d \lambda \middle| 
    \begin{aligned}
         \mu = (\rho,\omega,\zeta)\in \CEHC
    \end{aligned}
    \right\},
\end{equation}
 where $\lambda$ is any nonnegative Borel measure on $[0,1]\times \Omega$ such that $\rho,\omega, \zeta \ll \lambda$. By the homogeneity of $f_\delta$, the integral in the definition does not depend on the choice of $\lambda.$ 
\end{definition}

We will later show that $\WFRconst$ defines a metric on $\MplushC$ under certain conditions on $\eich$ and $\Ce$ (Theorem \ref{thm:constrained_WFR_metric}). First, we will study the analytical properties of the optimization problem \eqref{eqn: constWFR_affine_time_constant}.

% \begin{definition} [Affine-constrained WFR problem]
% Let $\rho_0, \rho_1\in \mathcal{M}^+(\Omega)$. 
% For functions $h$ defined on $[0,1]\times \Omega$ and $f$ defined on $[0,1]$, define \textbf{the affine-constrained Wasserstein-Fisher-Rao energy}
% by
% \begin{equation}
% \label{eqn: constWFR_affine}
% \WFRAF\rhosq = \inf \left\{ \int_{[0,1]\times \Omega} f_{\delta}\left(\frac{d\mu}{d\lambda}\right)d \lambda \middle| 
% \begin{aligned}
%      \mu = (\rho,\omega,\zeta)\in \mathcal{CE}(\rho_0,\rho_1) \\
%     \int_{\Omega}h(t,x)d\rho_t=f(t) \textrm{ for a.e.} t\in [0,1]
% \end{aligned}
% \right\}
% \end{equation}
%  Here, $\lambda$ is any nonnegative Borel measure on $[0,1]\times \Omega$ such that $\rho,\omega, \zeta \ll \lambda$. By the homogeneity of $f_\delta$, the integral in the definition does not depend on the choice of $\lambda.$ We refer to the constraint $\int_{\Omega} h(t,x)d\rho_t = f(t)$ for a.e. $t\in [0,1]$ as \textbf{an affine constraint with respect to $h$ and $f$} and denote it as $\textrm{Aff}(h,f)$.
% \todomartin{Do we really want to go in the troubles of considering the constraint for a.e. $t$, instead of just assuming continuity? (Mao: I think Lemma \ref{lem: abs_cont_f} might be in trouble if we don't consider a.e. $t$.) }
% \end{definition}

\subsection{Existence of solutions.}
The main results of this paper concern the existence of solutions for the constrained WFR optimization problems. Therefore{,} we will need the following notions of finite energy and minimal energy paths:
\begin{definition}
For $\rho_0,\rho_1 \in \Mplus(\Omega)$, we say that $\mu = (\rho,\omega,\zeta) \in \CEHC$ is a \define{finite energy path with respect to $\eich$ and $\Ce$} (or just a \define{finite energy path}) if the objective of \eqref{eqn: constWFR_affine_time_constant} is finite; that is, if $\int_{[0,1]\times \Omega} f_{\delta}\left(\frac{d\mu}{d\lambda}\right)d \lambda<\infty$. If $\mu$ realizes the infimum of \eqref{eqn: constWFR_affine_time_constant}, we say that $\mu$ is a \define{minimal energy path with respect to $\eich$ and $\Ce$} (or just a \define{minimal energy path}). 
\end{definition}
We now state our first main result.

\begin{theorem}[Existence of minimizers]\label{thm: AffWFRDuality}
    Let $\rho_0,\rho_1\in \mathcal{M}^+(\Omega)$, let $\eich:\Omega \to \mathbb{R}^d$ be a continuous function and let $\Ce\in \mathbb{R}^d$.  If a finite energy path with respect to $\eich$ and $\Ce$ exists, then a minimal energy path exists.
\end{theorem}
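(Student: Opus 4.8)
The plan is to apply the direct method of the calculus of variations. By hypothesis a finite energy path exists, so the infimum in \eqref{eqn: constWFR_affine_time_constant} is finite; fix a minimizing sequence $\mu^k = (\rho^k,\omega^k,\zeta^k) \in \CEHC$ whose energies $E_k := \int_{[0,1]\times\Omega} f_\delta(d\mu^k/d\lambda)\,d\lambda$ decrease to this infimum. The proof then proceeds in three stages: (i) establish weak-$*$ precompactness of the minimizing sequence; (ii) show that the constraint set $\CEHC$ is weak-$*$ closed; and (iii) invoke lower semicontinuity of the energy functional. Combining these, the weak-$*$ limit will be an admissible competitor whose energy does not exceed the infimum, hence a minimizer.

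For (i), I would first bound the total mass $m^k(t) := \rho^k_t(\Omega)$ uniformly in $k$ and $t$. Testing \eqref{eqn:dist_continuity_equation} with functions $\phi(t,x)=\psi(t)$ shows that $t \mapsto m^k(t)$ has distributional derivative $\zeta^k_t(\Omega)$; together with the Cauchy--Schwarz estimate $|\zeta^k_t(\Omega)| \le \delta^{-1}\sqrt{2 e_k(t)\, m^k(t)}$, where $e_k(t)$ is the time-density of the energy, this makes $t \mapsto \sqrt{m^k(t)}$ Lipschitz with $L^1$-controlled derivative, yielding a uniform bound on $m^k(t)$ and hence on $\rho^k([0,1]\times\Omega)=\int_0^1 m^k(t)\,dt$. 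Two further Cauchy--Schwarz bounds, $|\omega^k|([0,1]\times\Omega)\le \sqrt{2E_k}\,\sqrt{\rho^k([0,1]\times\Omega)}$ and $|\zeta^k|([0,1]\times\Omega)\le \delta^{-1}\sqrt{2E_k}\,\sqrt{\rho^k([0,1]\times\Omega)}$, then control the total variations of $\omega^k$ and $\zeta^k$. Since $[0,1]\times\Omega$ is compact, the space of finite Radon measures is the dual of a separable Banach space, so Banach--Alaoglu furnishes a weak-$*$ convergent subsequence $\mu^k \rightharpoonup \mu^* = (\rho^*,\omega^*,\zeta^*)$.

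Step (ii) is where I expect the main obstacle to lie. The continuity equation \eqref{eqn:dist_continuity_equation} is linear in $\mu$ and tested against fixed functions in $C^1([0,1]\times\Omega)$, with the boundary data $\rho_0,\rho_1$ appearing on a fixed right-hand side; it therefore passes to the weak-$*$ limit, giving $\mu^* \in \CE$. The affine constraint $\rho^*\in\AHC$ is more delicate, because it is a condition on the disintegration $\{\rho^*_t\}$ rather than on $\rho^*$ directly, and weak-$*$ convergence does not give pointwise-in-$t$ convergence. The plan is to recast it in a weak-$*$ stable form: $\rho\in\AHC$ holds (for a.e.\ $t$) if and only if, for every $\psi\in C([0,1])$ and every component $i$,
\begin{equation}
\int_{[0,1]\times\Omega} \psi(t)\,\eich_i(x)\,d\rho(t,x) = \Ce_i \int_0^1 \psi(t)\,dt.
\end{equation}
Since $\eich$ is continuous and $\Omega$ is compact, each integrand $\psi\otimes\eich_i$ belongs to $C([0,1]\times\Omega)$, so every such identity is preserved in the weak-$*$ limit. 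This forces $\int_\Omega \eich\,d\rho^*_t = \Ce$ for a.e.\ $t$; as disintegrations are defined only up to a Lebesgue-null set, we may choose a representative with $\rho^*_t\in\MplushC$ for \emph{all} $t$. Finiteness of the limiting energy (guaranteed by lower semicontinuity below) ensures via Proposition \ref{prop: disintegration in time} that $\rho^*$ genuinely disintegrates and that $\omega^*,\zeta^*\ll\rho^*$, so $\mu^*\in\CEHC$.

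Finally, for (iii), the functional $\mu \mapsto \int_{[0,1]\times\Omega} f_\delta(d\mu/d\lambda)\,d\lambda$ is weak-$*$ lower semicontinuous because $f_\delta$ is convex, lower semicontinuous, and positively $1$-homogeneous; this is the standard lower semicontinuity theorem for integral functionals of measures (of Reshetnyak type, exactly as used to establish existence for the unconstrained WFR problem). Consequently the energy of $\mu^*$ is at most $\liminf_k E_k$, which equals the infimum in \eqref{eqn: constWFR_affine_time_constant}. Since $\mu^*\in\CEHC$ by step (ii), $\mu^*$ realizes the infimum and is thus a minimal energy path, completing the argument. The only genuinely nonroutine point is the reformulation of the affine constraint in step (ii), and the corresponding care needed to upgrade the a.e.-in-$t$ constraint on the limit to a statement valid for all $t$.
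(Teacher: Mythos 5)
Your proof is correct, but it takes a genuinely different route from the paper. The paper does not use the direct method at all: it proves the (more general, time-dependent) statement, Theorem \ref{thm: AffWFRDuality_time}, via the Fenchel--Rockafellar theorem. There, the constrained problem is exhibited as the \emph{dual} of a primal problem posed over $X = C^1([0,1]\times\Omega)\times C([0,1])^d$, with the affine constraint absorbed into the linear operator $A(\phi,\psi) = (\partial_t\phi + \sum_i \eich_i\psi_i, \nabla\phi, \phi)$ through the auxiliary multiplier functions $\psi_i$; one then checks that $G^*(-A^*y^*)$ is the indicator of $\CEHC$ and that $F^*$ is the WFR objective, so existence of a minimizer follows directly from the attainment clause of Fenchel--Rockafellar, with no compactness estimates or lower semicontinuity argument needed. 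Your multiplier-free reformulation of the constraint via test functions $\psi(t)\eich_i(x)$ is essentially the primal shadow of the paper's dual multipliers $\psi_i(t)$, and it is exactly the right observation to make the constraint set weak-$*$ closed; your handling of the a.e.-$t$ versus all-$t$ issue (modifying the disintegration on a null set) and the ordering of LSC before the absolute-continuity claims are also sound. What each approach buys: the duality proof is shorter once the machinery is in place, produces the dual problem as a byproduct (which the paper reuses for the optimality certificates of Theorem \ref{thm:opt_certificate}), and is the version that requires the parallelizability workaround on manifolds; your direct method is more self-contained and elementary, makes the compactness structure of minimizing sequences explicit via the mass bound $\sup_t \rho^k_t(\Omega) \leq \bigl(\sqrt{\rho_0(\Omega)} + \delta^{-1}\sqrt{E_k/2}\bigr)^2$, and extends with no change to the time-varying constraints of Theorem \ref{thm: AffWFRDuality_time}. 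Two small points of rigor to tighten: $t\mapsto\sqrt{m^k(t)}$ is absolutely continuous rather than Lipschitz (and the differentiation requires care where $m^k$ vanishes---a Gr\"{o}nwall-type integral inequality avoids this), and you should note explicitly that the Reshetnyak lower semicontinuity theorem applies because $f_\delta$, being the conjugate of the indicator of the convex set $B_\delta$, is convex, lower semicontinuous and positively $1$-homogeneous.
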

We will postpone the proof of this theorem to Section~\ref{sec:existence_of_minimizers_time_varying}, where we shall in fact prove a more general version of this result, where $\eich$ and $\Ce$ may also depend on the time $t$ (Theorem \ref{thm: AffWFRDuality_time}).

Our next goal is to prove the existence of finite energy paths (and consequently, by Theorem~\ref{thm: AffWFRDuality}, the existence of minimizers) assuming certain conditions on either $\eich$ or on the given measures $\rho_0$ and $\rho_1$:

\begin{theorem}[Existence of finite energy paths]
    \label{thm: Affh0 Finite Energy Path_time_constant}
    Let $\eich: \Omega \to \mathbb{R}^d$ be a continuous function and 
    let $\rho_0,\rho_1\in\Mplushf$. Assume, in addition, that one of the following holds:
    \begin{enumerate}
      \item $\Omega$ is convex, $d=1$, $\Ce>0$ and $\eich\in C^1(\Omega,[c,\infty))$ for some $c>0$.\label{ass:1}
    \item the constraint is linear, i.e., $\Ce=0$;\label{ass:2}
    \item there exists {a nonnegative Radon measure} $\bar \rho$ with respect to which the Radon-Nikodym derivative of both $\rho_0$ and $\rho_1$ is a continuous and positive function, i.e., ${\frac{d\rho_i}{d\bar{\rho}} }\in C(\Omega,\mathbb R_{>0})$, {where $\mathbb R_{>0}$ denotes the set of strictly positive real numbers.} \label{ass:3}
   
    \end{enumerate}
     Then there exists a finite energy path in $\CEHC$.
\end{theorem}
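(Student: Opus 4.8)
The plan is to treat the three hypotheses separately, constructing in each case an explicit admissible triple $(\rho,\omega,\zeta) \in \CEHC$ and verifying that (i) it solves the continuity equation with the prescribed endpoints, (ii) its time-disintegration $\rho_t$ satisfies $\int_\Omega H\,d\rho_t = C$ for every $t$, and (iii) its WFR energy is finite. The guiding principle is that the WFR functional permits both transport (through $\omega$) and creation/destruction of mass (through $\zeta$), so there is considerable freedom in designing a path; the only genuine difficulty is to respect the affine constraint at every intermediate time while keeping the energy integrable. Since we only need \emph{some} finite energy path (optimality then follows from Theorem~\ref{thm: AffWFRDuality}), explicit and possibly far-from-optimal constructions suffice.

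For assumption~\eqref{ass:3} I would interpolate densities linearly. Writing $\rho_i = f_i\bar\rho$ with $f_i\in C(\Omega,\mathbb R_{>0})$, set $f_t := (1-t)f_0 + tf_1$, $\rho := dt\otimes f_t\bar\rho$, $\omega := 0$ and $\zeta := dt\otimes(f_1-f_0)\bar\rho$. The continuity equation reduces to $\partial_t f_t = f_1 - f_0$, which holds by construction, and because the constraint is affine one has $\int_\Omega H f_t\,d\bar\rho = (1-t)C + tC = C$ for all $t$. Finiteness of the energy is exactly where continuity and positivity enter: on the compact set $\Omega$ the densities are bounded below by $m := \min(\min_\Omega f_0, \min_\Omega f_1) > 0$, so $f_t \ge m$ and the pure-source energy $\tfrac{\delta^2}{2}\int_0^1\!\int_\Omega \tfrac{(f_1-f_0)^2}{f_t}\,d\bar\rho\,dt$ is dominated by $\tfrac{\delta^2}{2m}\int_\Omega(f_1-f_0)^2\,d\bar\rho<\infty$.

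For assumption~\eqref{ass:2} (the linear case $C=0$) the zero measure lies in $\MplushC$, and I would route the path through it. The key observation is that rescaling a measure preserves the constraint $\int_\Omega H\,d\rho=0$; moreover the quadratic profile $s\mapsto (1-s)^2\rho_0$ contracts $\rho_0$ to the zero measure with finite Fisher--Rao (pure source) energy, namely $2\delta^2\,\rho_0(\Omega)$, as a direct computation with $\zeta_s = -2(1-s)\rho_0$ shows. I would therefore concatenate the contraction $\rho_0\leadsto 0$ on $[0,\tfrac12]$ with the time-reversed contraction $0\leadsto\rho_1$ on $[\tfrac12,1]$; each half preserves $\int_\Omega H\,d\rho_s = (1-s)^2\cdot 0 = 0$ and has finite energy, and the two pieces glue at the zero measure.

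Assumption~\eqref{ass:1} is the substantive case and the one I expect to be the main obstacle. Here $d=1$ and $H\ge c>0$, so I would pass to the weighted measures $\tilde\rho := H\rho$; the constraint $\int_\Omega H\,d\rho = C$ becomes the fixed-total-mass condition $\tilde\rho(\Omega) = C$, which is preserved by mass-conserving transport. Since $\Omega$ is compact and convex, the Benamou--Brenier displacement interpolation connects $\tilde\rho_0 = H\rho_0$ and $\tilde\rho_1 = H\rho_1$ (equal masses $C$) by a finite-energy, mass-preserving path $(\tilde\rho_t,\tilde\omega_t)$ that stays inside $\Omega$. The crux is to transfer this back to a path of $\rho_t = \tilde\rho_t/H$: a short computation using $\partial_t\tilde\rho_t + \nabla\!\cdot\tilde\omega_t = 0$ shows that $\omega_t := \tilde\omega_t/H$ and $\zeta_t := -(\nabla H\cdot\omega_t)/H$ solve the continuity equation for $\rho_t$, the source term $\zeta$ appearing precisely to absorb the spatial variation of $H$. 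Finiteness of the WFR energy then follows from the bounds $\|\omega_t\|^2/\rho_t \le c^{-1}\|\tilde\omega_t\|^2/\tilde\rho_t$ and $\zeta_t^2 \le c^{-2}\|\nabla H\|_\infty^2\,\|\omega_t\|^2$, using $H\ge c>0$ and $H\in C^1(\Omega)$ (so $\|\nabla H\|_\infty<\infty$); the right-hand sides integrate to a constant multiple of the finite transport energy of $\tilde\rho$. The delicate points to handle carefully are the measure-theoretic (rather than smooth) justification of this change of variables and the absolute-continuity conditions $\omega_t,\zeta_t\ll\rho_t$ needed to invoke Proposition~\ref{prop: disintegration in time}, together with the essential use of convexity of $\Omega$ to keep the displacement interpolation supported in $\Omega$.
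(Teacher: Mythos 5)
Your proposal is correct and follows essentially the same route as the paper: case~\eqref{ass:3} is the paper's linear interpolation with pure source (your explicit lower-bound argument $f_t\ge m>0$ just makes quantitative what the paper gets by citing smoothness of the Fisher--Rao metric on positive continuous densities), case~\eqref{ass:2} is exactly the paper's quadratic Fisher--Rao contraction through the zero measure (Theorem~\ref{prop:Affh0_Finite_Energy_Path}, with energy $4\delta^2(\rho_0(\Omega)+\rho_1(\Omega))$ after the time-rescaling to half-intervals), and case~\eqref{ass:1} is precisely the specialization to time-independent $\eich$ of the paper's weight-by-$\eich$, transport, unweight-and-absorb-into-source construction (Theorem~\ref{thm:general_finite_energy_existence} via Proposition~\ref{prop: GMf without zeros}). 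The only organizational difference is that the paper deduces cases~\eqref{ass:1} and~\eqref{ass:2} from its more general time-varying theorems, whereas you carry out the same constructions directly.
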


\begin{remark}\label{rem:continuity_time_constant}
    The continuity of $H$ in Theorem~\ref{thm: Affh0 Finite Energy Path_time_constant} is not actually necessary, and there is a finite energy path as long as the integral is defined. However, we only consider continuous functions here as this is the condition for $H$ in Theorem \ref{thm: AffWFRDuality}.
\end{remark}
\begin{proof}
Given assumption \eqref{ass:1} or assumption~\eqref{ass:2}, the proof follows directly from the corresponding more general results with time-varying constraints; see Theorems~\ref{thm:general_finite_energy_existence} and~\ref{prop:Affh0_Finite_Energy_Path} below, respectively. It remains to prove the result given that assumption~\eqref{ass:3} holds. Consider the path $(\rho,\omega,\zeta)$ with $\rho_t = (1-t) \rho_0 + t \rho_1$, $\omega$ identically zero and $\zeta = \partial_t \rho = \rho_1 - \rho_0$. This path clearly satisfies the affine constraint and the continuity equation. Moreover, the path is finite energy: since $\omega = 0$, {the energy is equal to the energy w.r.t. the Fisher-Rao metric} (that is, the objective function of the variational problem defining the distance \eqref{eqn:fisher_rao_distance}), which must be finite because the Fisher-Rao metric is a smooth, Riemannian metric on the space of all measures satisfying assumption~\eqref{ass:3}, i.e., in the space of all measures that have a continuous and strictly positive Radon-Nikodym derivative w.r.t. to some fixed measure $\bar \rho$ (see \cite{friedrich1991fisher,bauer2016uniqueness}). Note that this {is} not true {in} the space of all Radon-measures, and thus, the assumption is really needed, cf.~\cite[Section 1.2]{chizat2018interpolating}.   
\end{proof}

%Next we will show that this unbalanced optimal transport problem  gives rise to a distance function on the constrained space.

\subsection{The constrained WFR distance}\label{sec:constrained_WFR_distance}
In this subsection, we will show that the constrained WFR energy induces a metric structure on the set of all measures satisfying the given constraint. Furthermore, we will see that,
similar to the unconstrained setting, this metric structure can be realized as the geodesic distance of a (formal) Riemannian metric.

%We start by showing that the square root of the optimal constrained WFR energy defines a distance metric on this space. 
To this end, we will need the following formula, which is analogous to the unconstrained case in \cite{chizat2018interpolating} and \cite{Dolbeault_2008}:
\begin{lemma}
Let $\eich:\Omega \to \mathbb{R}^d$ and $\Ce\in \mathbb{R}^d$.
For any $\rho_0,\rho_1 \in \MplushC$, the following equivalent formulation of the constrained WFR energy holds: 
\begin{equation}
\label{eqn: equivalentWFR}
    \normalfont{\WFRconst(\rho_0, \rho_1)} = \inf \left\{ \int_{0}^{1}\left\{\int_{\Omega} f_{\delta}\left(\frac{d\mu_t}{d\lambda_t}\right)d \lambda_t \right\}^{1/2} dt \middle| 
\begin{aligned}
  \mu = (\rho,\omega,\zeta)\in \mathcal{CE}_{\eich,\Ce}(\rho_0,\rho_1)
\end{aligned}
\right\}
\end{equation}
where $\mu_t$ is the disintegration of $\mu$ in time. Here, $\lambda_t$ is any negative Borel measure so that $\mu_t \ll \lambda_t$.
Moreover, any minimal energy path $\mu$, solution to \eqref{eqn: constWFR_affine_time_constant}, satisfies
\begin{equation}
\label{eqn: constant_speed}
\normalfont{\WFRconst(\rho_0, \rho_1)^2=\int_{\Omega} f_{\delta}\left(\frac{d\mu_t}{d\lambda_t}\right)d \lambda_t }
\end{equation}
for a.e. $t\in [0,1]$.
\end{lemma}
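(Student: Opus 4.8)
The plan is to recognize this as the standard equivalence between the \emph{energy} (action) and \emph{length} formulations of a dynamic distance, now carried out in the constrained setting. For a path $\mu = (\rho,\omega,\zeta) \in \CEHC$ with time disintegration $\mu_t = (\rho_t,\omega_t,\zeta_t)$, write $\ell(t) := \int_\Omega f_\delta(d\mu_t/d\lambda_t)\,d\lambda_t$ for the instantaneous squared speed. Using that $f_\delta$ is $1$-homogeneous (so $\lambda$ may be chosen of the form $dt \otimes \lambda_t$ with $\lambda_t$ dominating $\rho_t,\omega_t,\zeta_t$), the quantity minimized in \eqref{eqn: constWFR_affine_time_constant} is the energy $E(\mu) = \int_0^1 \ell(t)\,dt$, while the right-hand side of \eqref{eqn: equivalentWFR} is the length $L(\mu) = \int_0^1 \sqrt{\ell(t)}\,dt$. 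Thus the two claims of the lemma reduce to the identity $\inf_\mu E(\mu) = (\inf_\mu L(\mu))^2$ (both infima over $\mu \in \CEHC$) and to the statement that energy minimizers have constant $\ell$.

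First I would dispatch the easy inequality. For every finite energy path the Cauchy--Schwarz inequality gives $L(\mu)^2 = (\int_0^1 \sqrt{\ell}\,dt)^2 \le (\int_0^1 \ell\,dt)(\int_0^1 1\,dt) = E(\mu)$. Since $x \mapsto x^2$ is increasing on $[0,\infty)$, taking infima yields $(\inf_\mu L)^2 = \inf_\mu L^2 \le \inf_\mu E$, which is one inequality in \eqref{eqn: equivalentWFR}.

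For the reverse inequality I would use a time-reparametrization argument, which is where essentially all the work lies. Given a monotone $s:[0,1]\to[0,1]$ with $s(0)=0$ and $s(1)=1$, set $\tilde\rho_\tau = \rho_{s(\tau)}$, $\tilde\omega_\tau = s'(\tau)\,\omega_{s(\tau)}$, and $\tilde\zeta_\tau = s'(\tau)\,\zeta_{s(\tau)}$. The two crucial points are: (i) the reparametrized triple still lies in $\CEHC$, because $\partial_\tau\tilde\rho_\tau = s'(\tau)\,(\partial_t\rho)_{s(\tau)}$ scales $\omega$ and $\zeta$ by exactly the same factor, so the continuity equation is preserved, and the affine constraint $\rho_t \in \MplushC$ is preserved since reparametrization merely relabels the same family of spatial measures; and (ii) the speed transforms as $\tilde\ell(\tau) = s'(\tau)^2\,\ell(s(\tau))$, so that $L$ is reparametrization-invariant by change of variables, while choosing $s$ to be the normalized arc-length parametrization makes $\tilde\ell$ constant, giving $E(\tilde\mu) = L(\tilde\mu)^2 = L(\mu)^2$. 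This produces $\inf_\mu E \le \inf_\mu L^2 = (\inf_\mu L)^2$, completing \eqref{eqn: equivalentWFR}. The main obstacle is the degenerate case in which $\ell$ vanishes on a set of positive measure, so the arc-length function fails to be strictly increasing and $s$ is not a genuine diffeomorphism; I would handle this exactly as in the unconstrained treatments of \cite{chizat2018interpolating,Dolbeault_2008}, the key point being that the affine constraint introduces no new difficulty precisely because it is untouched by reparametrization.

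Finally, for the constant-speed claim \eqref{eqn: constant_speed}, let $\mu$ be a minimizer, so that $E(\mu) = \inf_\mu E = (\inf_\mu L)^2$. Combining $L(\mu)^2 \le E(\mu)$ with $L(\mu) \ge \inf_\mu L = \sqrt{E(\mu)}$ forces $L(\mu)^2 = E(\mu)$, i.e.\ equality in Cauchy--Schwarz. Equality there holds if and only if $\sqrt{\ell}$ is a.e.\ constant, and the value of that constant is $\int_0^1 \ell\,dt = E(\mu) = \WFRconst(\rho_0,\rho_1)^2$, which is exactly \eqref{eqn: constant_speed}. In summary, the entire argument is the formal energy--length duality via Cauchy--Schwarz and reparametrization, with the only genuine technical care needed in the degenerate-speed reparametrization step.
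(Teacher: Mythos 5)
Your proof is correct and follows essentially the same route as the paper, which skips the argument entirely by citing it as a direct adaptation of Theorem 5.4 in \cite{Dolbeault_2008} --- precisely the Cauchy--Schwarz plus constant-speed reparametrization argument you give, where the key observation (as you note) is that the time-independent affine constraint is invariant under reparametrization. Your deferral of the degenerate vanishing-speed case to the standard treatments is consistent with the level of detail the paper itself adopts.
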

% \begin{proof}
%     The proof of Theorem 5.5 in \cite{Dolbeault_2008} can be easily modified to apply to our situation. The second statement can be obtained by applying the equality condition of Hölder's inequality.
% \end{proof}
We skip the proof for brevity as this is a direct adaptation of that of Theorem 5.4 in \cite{Dolbeault_2008}. This lemma now allows {us} to prove that the constrained WFR energy indeed defines a distance function on $\MplushC$. 

\begin{theorem}[WFR defines a metric]\label{thm:constrained_WFR_metric} Let $\eich:\Omega \to \mathbb{R}^d$ and $\Ce\in \mathbb{R}^d$ such that there is a finite energy path of the constrained WFR energy between any two measures in $\MplushC$. Then $\WFRconst$ defines a metric on $\MplushC$, and this metric can be interpreted as the geodesic distance of a (formal) Riemannian metric, which we will call the constrained WFR Riemannian metric. 
\end{theorem}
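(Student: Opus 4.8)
The plan is to check the four metric axioms for $\WFRconst$ on $\MplushC$ and then to recognize the resulting distance as the geodesic distance of the restriction to $\MplushC$ of the WFR Riemannian metric coming from Otto's submersion.

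First I would dispose of the easy axioms. Finiteness is exactly the hypothesis: since a finite energy path exists between any two elements of $\MplushC$, the infimum in \eqref{eqn: constWFR_affine_time_constant} is finite, so $\WFRconst$ is real-valued; non-negativity is immediate from $f_{\delta}\ge 0$. For symmetry I would use time reversal, sending $(\rho_t,\omega_t,\zeta_t)\mapsto(\rho_{1-t},-\omega_{1-t},-\zeta_{1-t})$. A short (routine) computation, substituting $s=1-t$ and testing against $\chi(s,\cdot)=\phi(1-s,\cdot)$, shows this maps $\mathcal{CE}_{H,C}(\rho_0,\rho_1)$ bijectively onto $\mathcal{CE}_{H,C}(\rho_1,\rho_0)$; the time-independent constraint $\rho_t\in\MplushC$ is clearly preserved under reindexing, and $f_{\delta}$ is unchanged since it depends only on $\|\omega\|^2$ and $\zeta^2$. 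For the identity of indiscernibles, one direction is realized by the stationary path $\rho_t\equiv\rho_0$, $\omega\equiv 0$, $\zeta\equiv 0$, which has zero energy. For the converse I would note that $\CEHC\subseteq\CE$, so restricting to the smaller admissible set only raises the infimum, giving $\WFRconst(\rho_0,\rho_1)\ge\WFR(\rho_0,\rho_1)$; if the left-hand side vanishes then $\WFR(\rho_0,\rho_1)=0$, and since the unconstrained WFR distance is already a genuine metric \cite{chizat2018interpolating}, we conclude $\rho_0=\rho_1$.

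The triangle inequality is the technical heart, and I would argue it through the length formulation \eqref{eqn: equivalentWFR}, whose integrand is invariant under time reparametrization. The key structural fact is that for an increasing reparametrization $t=\tau(s)$ the triple $(\rho_{\tau(s)},\tau'(s)\,\omega_{\tau(s)},\tau'(s)\,\zeta_{\tau(s)})$ still solves the distributional continuity equation, and by the $2$-homogeneity of $f_{\delta}$ in $(\omega,\zeta)$ its time-integrand $(\int_\Omega f_{\delta}\,d\lambda_s)^{1/2}$ acquires exactly the Jacobian factor $\tau'(s)$, so the total length is unchanged. Given $\rho_0,\rho_1,\rho_2\in\MplushC$, I would take minimizers for the two legs (these exist by Theorem \ref{thm: AffWFRDuality}, or else $\varepsilon$-minimizers), reparametrize the first onto $[0,\tfrac12]$ and the second onto $[\tfrac12,1]$, and concatenate. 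The concatenation lies in $\mathcal{CE}_{H,C}(\rho_0,\rho_2)$: the constraint holds at every time because it held on each leg, and testing \eqref{eqn:dist_continuity_equation} against $\phi\in C^1([0,1]\times\Omega)$ and splitting the time integral at $t=\tfrac12$ makes the two junction boundary terms, each equal to $\int_\Omega\phi(\tfrac12,\cdot)\,d\rho_1$ with opposite signs, cancel, leaving precisely the boundary data for $\rho_0$ and $\rho_2$. Additivity of the length over $[0,\tfrac12]$ and $[\tfrac12,1]$ then yields $\WFRconst(\rho_0,\rho_2)\le\WFRconst(\rho_0,\rho_1)+\WFRconst(\rho_1,\rho_2)$ after passing $\varepsilon\to 0$. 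I expect the main obstacle to be precisely this paragraph: establishing reparametrization invariance and the gluing of two distributional solutions at the level of Radon measures (not smooth densities), where one must check that the disintegrations and the junction terms behave as claimed.

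For the Riemannian interpretation I would invoke the Otto-type picture developed in Section~\ref{ssec:Otto_picture}: on the space of positive densities the unconstrained WFR distance is the geodesic distance of a formal Riemannian metric $G^{\mathrm{WFR}}$ whose squared norm $\|\dot\rho\|^2_{G^{\mathrm{WFR}}}$ is obtained by minimizing $\int_\Omega f_{\delta}(\rho,\omega,\zeta)$ over all $(\omega,\zeta)$ representing $\dot\rho$ through the continuity equation (the fiber minimization of the submersion). Since $\MplushC$ is the level set of the smooth affine map $\rho\mapsto\int_\Omega H\,d\rho$, it is a formal submanifold with tangent space $\{\dot\rho:\int_\Omega H\,d\dot\rho=0\}$, and I would define the constrained WFR Riemannian metric as the restriction of $G^{\mathrm{WFR}}$ to these tangent spaces. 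The formulation \eqref{eqn: equivalentWFR} then exhibits $\WFRconst(\rho_0,\rho_1)$ as the infimum over paths $t\mapsto\rho_t$ lying entirely in $\MplushC$ of $\int_0^1\|\dot\rho_t\|_{G^{\mathrm{WFR}}}\,dt$, because the inner optimization over $(\omega,\zeta)$ is exactly the fiber minimization computing the $G^{\mathrm{WFR}}$-norm; this is by definition the geodesic distance of the restricted metric, and the constant-speed identity \eqref{eqn: constant_speed} identifies the minimizers with constant-speed geodesics. Being a ``formal'' statement, I would keep this last part at the level of identifying the distance with a submanifold geodesic distance and defer the rigorous submersion analysis to Section~\ref{ssec:Otto_picture}.
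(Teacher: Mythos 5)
Your proposal is correct and follows essentially the same route as the paper: the easy axioms are handled identically (comparison with the unconstrained WFR distance for indiscernibility, time reversal for symmetry), the triangle inequality is proved by concatenating (near-)optimal paths and invoking the length formulation \eqref{eqn: equivalentWFR}, and the Riemannian interpretation is obtained by viewing $\MplushC$ as an affine submanifold and restricting the unconstrained WFR metric to it. The only cosmetic difference is that the paper concatenates exact energy minimizers (from Theorem \ref{thm: AffWFRDuality}) with an explicit factor-$2$ rescaling and uses the constant-speed identity \eqref{eqn: constant_speed}, whereas you phrase the same step via reparametrization invariance and $\varepsilon$-minimizers of the length functional.
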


\begin{proof}
To show that $\normalfont{\WFRconst}$ is a distance function, we adapt the proof in \cite{charon_length_2021} to our situation.
For any $\rho_0,\rho_1 \in \MplushC$, we trivially have $\WFRconst(\rho_0,\rho_1)\geq 0$. If $\WFRconst(\rho_0,\rho_1) = 0$, then so is the unconstrained WFR distance between $\rho_0$ and $\rho_1$ and thus we have $\rho_0=\rho_1$. Conversely, if $\rho_0=\rho_1=\rho$, the constant path where both the momentum and the source term are zero shows that $\WFRconst(\rho,\rho)=0$ for any $\rho\in\Mplushf$.  

To show that $\WFRconst$ is symmetric, we observe that any finite energy path $\mu = (dt \otimes\rho_t, dt \otimes\omega_t, dt \otimes\zeta_t)$ from $\rho_0$ to $\rho_1$ in $\AHC$  can be reversed in time to construct a finite energy path from $\rho_1$ to $\rho_0$ still in $\AHC$: $\mu' = (dt\otimes\rho_{1-t}, -dt \otimes\omega_{1-t}, -dt \otimes\zeta_{1-t})$. The time reversal operation is a bijection on the space of finite energy paths as repeating the process yields the identity map. Moreover, the operation preserves the energy, so reversing a minimal energy path is, again, a minimal energy path. Thus, we have  $\WFRconst(\rho_0,\rho_1) =\WFRconst(\rho_1,\rho_0)$. 

It remains to prove the triangle inequality. Let $\rho_0,\rho_1,\rho_2\in \Mplushf$ and consider a minimal energy path $\mu^{01} = (dt \otimes\rho_t^{01}, dt \otimes\omega_t^{01}, dt \otimes\zeta^{01}_t)$ from $\rho_0$ to $\rho_1$ and a minimal energy path $\mu^{12}= (dt \otimes\rho_t^{12}, dt \otimes\omega_t^{12}, dt \otimes\zeta^{12}_t)$ from $\rho_1$ to $\rho_2$. Note that the existence of minimal energy paths is guaranteed by~Theorem~\ref{thm: AffWFRDuality}.
We then define $\mu^{02}=(dt \otimes\rho_t^{02}, dt \otimes\omega^{02}_t, dt\otimes\zeta_t^{02})$ by
\begin{align}
    \rho^{02}_t = 
    \begin{cases}
        \rho^{01}_{2t} & 0\leq t\leq 1/2 \\
        \rho^{12}_{2t-1} & 1/2 \leq t \leq 1
    \end{cases}, \ \
    \omega^{02}_t = 
    \begin{cases}
        2\omega^{01}_{2t} & 0\leq t\leq 1/2 \\
        2\omega^{12}_{2t-1} & 1/2 \leq t \leq 1
    \end{cases}, \ \
    \zeta^{02}_t = 
    \begin{cases}
        2\zeta^{01}_{2t} & 0\leq t\leq 1/2 \\
        2\zeta^{12}_{2t-1} & 1/2 \leq t \leq 1
    \end{cases} 
\end{align}
\begin{comment}
We now prove the triangle inequality.  Let $\mu =(\rho_{t}\otimes dt,\omega_t\otimes dt, \zeta_t\otimes dt)$ be a minimal energy path from $\rho_0$ to $\rho_1$, and let $(\rho_{t}'\otimes dt,\omega_t'\otimes dt, \zeta_t'\otimes dt)$ be a minimal energy path from $\rho_1$ to $\rho_2$ where $\rho_2 \in \Mplushf$. Then a concatenation of $\mu$ and $\mu'$, or the path $\bar{\mu} = (\bar{\rho}_t\otimes dt, 2\bar{\omega}_t\otimes dt, 2\bar{\zeta}_t\otimes dt)$ where $\bar{\rho}_t = 1_{[0,1/2)}(t)\rho_{2t}+1_{[1/2,1]}(t)\rho'_{2t-1}$ and $\bar{\omega}_t$, $\bar{\zeta}_t$ defined similarly. By direct calculation, this satisfies the continuity equation and $\textrm{Aff}(h,f)$. Moreover, the energy of this path is the sum of the energy of $\mu$ and the one of $\mu'$. Therefore, by definition of 
\end{comment}
By direct calculation, we have $\mu^{02} \in \CEHC$. Using Equation \eqref{eqn: equivalentWFR},
we calculate
\begin{align}
    \WFRconst(\rho_0,\rho_2) &\leq   \int_{0}^{1/2}\left\{\int_{\Omega} f_{\delta}\left(\frac{d\mu_t^{02}}{d\lambda_t}\right)d \lambda_t \right\}^{1/2} dt +\int_{1/2}^{1}\left\{\int_{\Omega} f_{\delta}\left(\frac{d\mu_t^{02}}{d\lambda_t}\right)d \lambda_t \right\}^{1/2} dt \\& \leq \int_{0}^{1}\left\{\int_{\Omega} 4f_{\delta}\left(\frac{d\mu_{s}^{01}}{d\lambda_s}\right) d \lambda_s \right\}^{1/2} \frac{1}{2}ds +  \int_{0}^{1}\left\{\int_{\Omega} 4f_{\delta}\left(\frac{d\mu_{s'}^{12}}{d\lambda_{s'} }\right) d \lambda_{s'} \right\}^{1/2} \frac{1}{2}ds' \\ &= \left\{\int_{0}^{1}\int_{\Omega}f_{\delta}\left(\frac{d\mu_{s}^{01}}{d\lambda_s}\right) d \lambda_s  ds \right\}^{1/2} +\left\{\int_{0}^{1}\int_{\Omega}f_{\delta}\left(\frac{d\mu_{s'}^{12}}{d\lambda_{s'}}\right) d \lambda_{s'} ds' \right\}^{1/2} \\ &= \WFRconst(\rho_0,\rho_1) + \WFRconst(\rho_1,\rho_2)
\end{align}
Here, the first line is just the definition of the infimum, and the inequality in the second uses the fact that $\frac{\|2\omega\|^2+\delta^2(2\zeta)^2}{\rho} = 4\frac{\|\omega\|^2+\delta^2\zeta^2}{\rho}$ and the substitution $s=2t, s'=2t-1$.  The equality in the third line is just \eqref{eqn: constant_speed}. Finally, the last line follows from the fact that $\mu^{01}$ and $\mu^{12}$ are minimal energy paths.

Note that $\MplushC$ is an affine subspace and thus a submanifold of the infinite-dimensional manifold of all densities and that the definition of $\WFRconst$ coincides with the definition of the unconstrained WFR distance up to the fact that the set of allowable paths is constrained to this submanifold. Consequently, $\WFRconst$ is indeed the geodesic distance of a (formal) Riemannian metric, namely of the restriction of the Riemannian metric corresponding to the unconstrained WFR distance to the submanifold $\MplushC$. 
\end{proof}

Still on a formal level, one can further derive necessary optimality equations satisfied by constant-speed geodesics of the constrained WFR Riemannian metric. We will here restrict to measures $\rho,\omega,\zeta$ such that for all $t$, $\rho_t,\omega_t,\zeta_t$ are all positive smooth densities with respect to the Lebesgue measure $dx$ and also assume that the constraint function $H:\Omega\rightarrow \R^d$ is smooth. We will denote by $(H_i)_{i=1,\ldots,d}$ the coordinate functions of $H$. For our purpose, we first reformulate the constraints on $\rho$ as constraints on the momentum and source variables $\omega$ and $\zeta$ based on the following lemma:
\begin{lemma}
    \label{lem:affine_constraint_rho_zeta}
    Let $H = (H_1, \cdots , H_d) \in C^1(\Omega)^d$, $C = (C_1, \cdots C_d)\in \mathbb{R}^d$ and $\rho_0, \rho_1 \in \MplushC$. A finite energy path $(\rho, \omega, \zeta)$ that belongs to $\CE$ is in $\AFF(H, C)$ if and only if
    \begin{equation}
        \int_{\Omega}\nabla H_i\cdot d\omega_t + \int_{\Omega}H_i d\zeta_t = 0, \textrm{ a.e. } t\in [0,1] \textrm{ and }i=1,\cdots ,d \label{eqn:affine_constraint_rho_zeta}.
    \end{equation}
\end{lemma}
\begin{proof}
    For any $u\in C^1([0,1])$, since $H_i\in C^1(\Omega)$, we have $\phi=H _i(x)u_i(t)\in C^1([0,1]\times \Omega)$. Evaluating the continuity equation \eqref{eqn:dist_continuity_equation} with such a $\phi$ gives, by using the fact that $\rho_0, \rho_1 \in \MplushC$,
    \begin{equation}
        \int_{0}^1\left(\int_{\Omega}H_i(x)d\rho_t\right)  u'(t)dt + \int_{0}^1 u(t)\left(\int_{\Omega} \nabla H \cdot d\omega_t\right) dt + \int_{0}^1 u(t)\left(\int_{\Omega} H_i d\zeta_t\right) dt = C_i(u(1)-u(0)).
    \end{equation}
or equivalently
\begin{equation}
    \int_{0}^1\left(\int_{\Omega}H_i(x)d\rho_t - C_i\right)  u'(t)dt + \int_{0}^1 u(t)\left(\int_{\Omega} \nabla H_i \cdot d\omega_t + \int_{\Omega} H_i d\zeta_t\right) dt = 0
\end{equation}
Therefore, if $\int_{\Omega}H_i(x)d\rho_t = C_i$ for a.e. $t\in [0,1]$, since $u$ is arbitrary, we have $\int_{\Omega} \nabla H_i \cdot d\omega_t + \int_{\Omega} H d\zeta_t = 0$ for a.e. $t\in [0,1]$. Conversely, if $\int_{\Omega} \nabla H_i \cdot d\omega_t + \int_{\Omega} H_i d\zeta_t = 0$ for a.e. $t\in [0,1]$, again since $u$ is arbitrary, we have $\int_{\Omega}H_i(x)d\rho_t = C$ for a.e. $t\in [0,1]$.
\end{proof}

\begin{comment}
Then, for $\rho_0,\rho_1 \in {\MplushC}$ and $\mu=(\rho,\omega,\zeta) \in \mathcal{CE}_{\eich,\Ce}(\rho_0,\rho_1)$, applying \eqref{eqn:dist_continuity_equation} specifically to $\phi(t,x) = H_i(x) u(t)$ with $u$ a smooth function of $t$, leads to: 
\textcolor{blue}{The following up to (13) is essentially the same as the proof of Lemma 4.15 so I suggest moving Lemma 4.15 to here.}
\begin{align*}
&\int_0^1 \left(\int_{\Omega} H_i(x) \rho_t(x) dx \right) u'(t) dt + \int_0^1 \left(\int_{\Omega} \nabla H_i(x) \cdot \omega_t(x) dx \right) u(t) dt + \int_0^1 \left(\int_{\Omega} H_i(x) \zeta_t(x) dx \right) u(t) dt \\
&=u(1)\int_\Omega H_i(x) \rho_1(x) dx - u(0) \int_\Omega H_i(x) \rho_0(x) dx.
\end{align*}
Now, with the constraints $\int_\Omega H_i(x)\rho_t(x) dx =0$ for all $t \in [0,1]$, the above becomes equivalent to: 
\begin{equation*}
    \int_0^1 \left(\int_{\Omega} \nabla H_i(x) \cdot \omega_t(x) + H_i(x) \zeta_t(x) dx \right) u(t) dt =0
\end{equation*}
and, since this should hold for any smooth function $t\mapsto u(t)$, we see that the path constraints on $\rho_t$ can be alternatively expressed with respect to $\omega$ and $\zeta$ as
\begin{equation}
\label{eq:constraint_controls}
    \int_{\Omega} (\nabla H_i \cdot \omega_t + H_i \zeta_t) dx =0
\end{equation}
for all $t \in [0,1]$ and $i=1,\ldots,d$.
\end{comment}
{
Based on  Lemma \ref{lem:affine_constraint_rho_zeta}, we introduce the following Lagrangian functional for the reformulated problem with constraints on $\omega$ and $\zeta$:
\begin{align*}
    \mathcal{L}(\rho,\omega,\zeta,\Phi,\gamma) = &\int_0^1 \int_{\Omega} \frac{\|\omega\|^2 + \delta^2 \zeta^2}{2\rho} dx dt + \int_0^1 \int_{\Omega} \Phi(t,x)(\partial_t \rho + \nabla \cdot \omega - \zeta) dx dt \\
    &\hspace{2in}+\sum_{i=1}^d \int_0^1 g_i(t) \int_{\Omega} (\nabla H_i \cdot \omega + H_i \zeta) dx dt,
\end{align*}
in which the two functions $(t,x) \mapsto \Phi(t,x)$ and $t \mapsto g_i(t)$ act as Lagrange multipliers for the different constraints. We thus get the following first-order optimality conditions:
\begin{align*}
    &0=D_\rho \mathcal{L} \cdot \delta \rho = \int_0^1 \int_\Omega \left(-\frac{\|\omega\|^2 + \delta^2 \zeta^2}{2 \rho^2} -\partial_t \Phi\right) \delta \rho \, dx dt \\
    &0=D_\omega \mathcal{L} \cdot \delta \omega = \int_0^1 \int_\Omega \left(\frac{\omega}{\rho} -\nabla \Phi+\sum_{i=1}^dg_i(t) \nabla H_i \right) \cdot \delta \omega \, dx dt \\
    &0=D_\zeta \mathcal{L} \cdot \delta \zeta = \int_0^1 \int_\Omega \left(\delta^2 \frac{\zeta}{\rho} - \Phi +\sum_{i=1}^d g_i(t) H_i\right)\delta\zeta \, dx dt 
\end{align*}
for all $\delta \rho, \ \delta\omega, \ \delta\zeta$. This leads to
\begin{align}
\label{eq:opt_cond1}
    \partial_t \Phi + \frac{\|\omega\|^2+\delta^2 \zeta^2}{2\rho^2} =0, \ \ \omega = \left(\nabla \Phi - \sum_{i=1}^d g_i(t) \nabla H_i \right) \rho, \ \ \zeta = \frac{1}{\delta^2}\left(\Phi-\sum_{i=1}^d g_i(t)H_i \right)\rho.
\end{align}
Using the above expressions of $\omega$ and $\zeta$ in \eqref{eqn:affine_constraint_rho_zeta} leads to the following $(d \times d)$ linear system on $g(t)=(g_i(t))_{i=1,\ldots,d}$ for a.e. $t\in[0,1]$:
\begin{equation}
\label{eq:expr_bar_phi}
    G_{\rho_t} g(t) = b_t, \ \text{with } G_{\rho_t} = \left(\langle H_i,H_j\rangle_{W^{1,2}_{\rho_t}}\right)_{i,j=1,\ldots d} \ \text{and } b_{t} = \left(\langle H_i,\Phi(t,\cdot) \rangle_{W^{1,2}_{\rho_t}} \right)_{i=1,\ldots,d},
\end{equation}
where $\langle u,v \rangle_{W^{1,2}_{\rho_t}} = \int_\Omega \left[\frac{1}{\delta^2} u v +\nabla u \cdot \nabla v \right] \rho_t dx $ for any two smooth functions $u,v$ on $\Omega$. We note that $G_{\rho_t}$ corresponds to the Gram matrix of the constraint functions $H_i$ with respect to the $W^{1,2}_{\rho_t}$ metric and is thus invertible as long as these are linearly independent. Thus we obtain $g(t) = G_{\rho_t}^{-1} b_t$ and given the definition of $G_{\rho_t}$ and $b_t$, we see that the vector $g(t) \in \R^d$ gives the coordinates of the projection of $\Phi$ onto the subspace spanned by the $H_i$'s. Consequently, $\bar{\Phi}(t,x)=\sum_{i=1}^{d} g_i(t) H_i(x)$ is exactly the orthogonal projection of $\Phi(t,\cdot)$ onto $\text{span}\{H_1,\ldots,H_d\}$ with respect to the $W^{1,2}_{\rho_t}$ Hilbert metric. Going back finally to \eqref{eq:opt_cond1}, we have obtained the following conditions on the optimal path:
\begin{equation}
\label{eq:opt_cond2}
\left\{
\begin{aligned}
&\partial_t \Phi + \frac{1}{2} \left\|\nabla \Phi(t,\cdot) - \nabla\bar{\Phi}(t,\cdot) \right\|^2 +\frac{1}{2\delta^2} \left(\Phi(t,\cdot) - \bar{\Phi}(t,\cdot)\right)^2 =0 %=\|\Phi(t,\cdot)-\bar{\Phi}(t,\cdot)\|_{W^{1,2}_{\rho_t}}^2\\
\\
&\partial_t \rho +\nabla \cdot(\rho (\nabla\Phi-\nabla \bar{\Phi})) = \frac{1}{\delta^2}\left(\Phi-\bar{\Phi}\right)\rho.
\end{aligned}
\right.
\end{equation}
% \begin{equation}
% \label{eq:opt_cond2}
% \left\{
% \begin{aligned}
% &\partial_t \Phi = \frac{1}{2} \left\|\nabla \Phi - \frac{\langle H, \Phi \rangle_{W^{1,2}_{\rho_t}}}{\|H\|^2_{W^{1,2}_{\rho_t}}} \nabla H \right\|^2 +\frac{1}{2\delta^2} \left(\Phi - \frac{\langle H, \Phi \rangle_{W^{1,2}_{\rho_t}}}{\|H\|^2_{W^{1,2}_{\rho_t}}} H\right)^2\\
% &\partial_t \rho +\nabla \cdot \omega=\zeta \\
% &\omega=-\left(\nabla \Phi - \frac{\langle H, \Phi \rangle_{W^{1,2}_{\rho_t}}}{\|H\|^2_{W^{1,2}_{\rho_t}}} \nabla H\right)\rho \\
% &\zeta=-\frac{1}{\delta^2}\left(\Phi-\frac{\langle H, \Phi \rangle_{W^{1,2}_{\rho_t}}}{\|H\|^2_{W^{1,2}_{\rho_t}}}H\right)\rho
% \end{aligned}
% \right.
% \end{equation}
These can be seen as a coupled system of PDEs on the density trajectory $\rho(t,x)$ and the potential function $\Phi(t,x)$ which, on the one hand, extends those of the standard Wasserstein-FR metric and, as we shall elaborate in Section \ref{ex:WFR_prob}, those of the spherical Hellinger-Kantorovich model derived in \cite{jing2024machine}. {In section \ref{sec:suff-cond-optimal}, we will show that this equation is, in fact, a sufficient condition for optimality, suggesting that the argument here has the potential to be extended beyond a formal argument.}

\subsection{Geometric aspects of the constrained WFR distance}
\label{ssec:Otto_picture}
Finally, we will study geometric aspects of the proposed constrained UOT model; namely, we aim to extend Otto's Riemannian submersion from optimal transport to the situation {of UOT with time-independent constraints}:
a classical result~\cite{OttoPic} in (balanced) optimal transport asserts that the flat $L^2$-metric on the group of diffeomorphisms of $\Omega$ descends to a Riemannian metric on the space of probability densities, whose geodesic distance function is exactly the Wasserstein distance. The simple nature of the geometry of the $L^2$-metric -- geodesics are given by straight lines -- gives rise to a beautiful geometric interpretation of the correspondence between the static and dynamic formulation of optimal transport{; see also~\cite{klas2017geometry} for an expository presentation of the geometric description of OT.} More recently, this picture has been extended to the unbalanced situation by Gallouet and Vialard~\cite{gallouet2018camassa}, see also~\cite{gallouet2021regularity}. Before we describe the extension of this geometric interpretation to the constrained model of the present article, we will briefly recall the constructions for the unconstrained situations. In the following, we will restrict ourselves to the category of smooth functions (densities, resp.), i.e., we consider the space of all smooth, positive densities 
\begin{equation}
\operatorname{Dens}(\Omega):=\left\{\psi\rho_0 \mid   \psi\in C^{\infty}(\Omega,\mathbb R_{>0}) \right\}, 
\end{equation} 
where $\rho_0$ is a fixed element of $\mathcal{M}^+(\Omega)$. We will also consider the subset of  
all smooth probability densities, 
\begin{equation}
\operatorname{Dens}_1(\Omega):=\left\{\psi\rho_0\in \operatorname{Dens}(\Omega) \mid   \int_{\Omega} \psi\rho_0=1 \right\}. 
\end{equation} 
Next we consider the group of smooth diffeomorphisms $\operatorname{Diff}(\Omega)$ and equip it with the (flat) $L^2(\rho_0)$-Riemannian metric $G^{L^2}$, i.e.,
\begin{equation}
G^{L^2}_{\varphi}(\delta\varphi,\delta\varphi)=\int_{\Omega} \langle \delta\varphi,\delta\varphi\rangle \rho_0, \qquad \text{ for }\varphi\in \operatorname{Diff}(\Omega),\quad \delta\varphi\in T_{\varphi}\operatorname{Diff}(\Omega).
\end{equation}
In this setup, Otto's Riemannian submersion result~\cite{OttoPic} asserts that \begin{align*} \pi: \operatorname{Diff}(\Omega) & \to \operatorname{Dens}_1(\Omega) \text{ given by } \\ \varphi & \mapsto \varphi_*\rho_0
\end{align*} is a formal Riemannian submersion from $\operatorname{Diff}(\Omega)$ with the (flat) $L^2(\rho_0)$ metric onto the space of smooth probability densities equipped with the (formal) Riemannian metric corresponding to the Wasserstein distance. 
To extend this construction to the constrained, unbalanced setting{,} we follow the work of Gallou\"{e}t and Vialard~\cite{gallouet2018camassa}: let $\mathcal{C}(\Omega)$ be the cone over $\Omega$  and consider the automorphism group
$\operatorname{Aut}(\mathcal{C}(\Omega))$, which can be viewed as the semi-direct product of $\operatorname{Diff}(\Omega)$ and $C^\infty(\Omega,\R^+)$. Next{,} we need to generalize the $L^2(\rho_0)$-metric to this setting. {Therefore we equip the cone $\mathcal{C}(\Omega)$ with the cone metric
\begin{equation}
\mathfrak{g}_{x,m}((dx,dm),(dx,dm)):= m dx^2+\delta^2 \frac{dm^2}{m},
\end{equation}
and let $\rho_0^{\mathcal{C}(\Omega)}$ be the induced volume form on the cone. 
This then leads to a natural definition of the corresponding $L^2(\rho_0)$-metric on $\operatorname{Aut}(\mathcal{C}(\Omega))$ given via
\begin{equation}\label{eq:coneL2metric}
G^{L^2(\rho_0)}_{\varphi,f}\big((\delta\varphi,\delta f),(\delta\varphi,\delta f)\big)=\int_{\mathcal{C}(\Omega)} \mathfrak{g}_{\varphi,f}((\delta\varphi,\delta f),(\delta\varphi,\delta f))\rho_0^{\mathcal{C}(\Omega)}
\end{equation}
}
In this setup{,} it is shown in ~\cite{gallouet2018camassa}, that the projection $\pi_0: \operatorname{Aut}(\mathcal{C}(\Omega)) \to \operatorname{Dens}(\Omega)$ given by $(\varphi,f) \mapsto \varphi_*(f^2\rho_0)$ is again a (formal) Riemannian submersion, where $\operatorname{Aut}(\mathcal{C}(\Omega))$ is equipped with the (flat) $L^2(\rho_0)$ Riemannian metric and $\operatorname{Dens}(\Omega)$ is equipped with the (formal) Riemannian metric corresponding to the Wasserstein-Fisher-Rao distance.

Mimicking these constructions, we obtain the following result for the constrained and unbalanced situation. For a detailed proof in the unconstrained situation, which can be readily adapted to the constrained situation, we refer to~\cite{gallouet2018camassa}.  
\begin{proposition}\label{prob:otto}
Let $\eich:\Omega \to \mathbb{R}^d$ and $\Ce\in \mathbb{R}^d$. Denote by $\operatorname{Dens}_{\eich,\Ce}(\Omega)=\operatorname{Dens}(\Omega)\cap\MplushC$ the set of smooth densities satisfying the constraint prescribed by $\eich$ and $\Ce$ and by $\operatorname{Aut}_{\eich,\Ce}(\mathcal{C}(\Omega))$ the corresponding subset of $\operatorname{Aut}(\mathcal{C}(\Omega))$ such that $\pi_0(\operatorname{Aut}_{\eich,\Ce}(\mathcal{C}(\Omega)))=\operatorname{Dens}_{\eich,\Ce}(\Omega)$. 
Then the projection
\begin{align*} \pi_0: \operatorname{Aut}_{\eich,\Ce}(\mathcal{C}(\Omega)) & \to \operatorname{Dens}_{\eich,\Ce}(\Omega) \text{ given by } \\ (\varphi,f) & \mapsto \varphi_*(f^2\rho_0)
\end{align*} 
 is a (formal) Riemannian submersion, where $\operatorname{Aut}_{\eich,\Ce}(\mathcal{C}(N))$ is equipped with the restriction of the (flat) $L^2(\rho_0)$ Riemannian metric{~\eqref{eq:coneL2metric}} and $\operatorname{Dens}_{\eich,\Ce}(\Omega)$ is equipped with the restriction of the (formal) Riemannian metric corresponding to the Wasserstein-Fisher-Rao distance.
\end{proposition}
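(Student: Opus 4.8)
The plan is to deduce the result from the unconstrained Riemannian submersion of Gallou\"et--Vialard together with a general principle: a (formal) Riemannian submersion always restricts to a Riemannian submersion over the preimage of a submanifold of the base, once both spaces are endowed with their induced metrics. Concretely, writing $\pi_0 \colon \operatorname{Aut}(\mathcal{C}(\Omega)) \to \operatorname{Dens}(\Omega)$ for the unconstrained submersion, I would observe that $\operatorname{Dens}_{\eich,\Ce}(\Omega) = \operatorname{Dens}(\Omega) \cap \MplushC$ is cut out by the affine constraint $\int_\Omega \eich\, d\rho = \Ce$ and is therefore a (formal) submanifold of $\operatorname{Dens}(\Omega)$, while $\operatorname{Aut}_{\eich,\Ce}(\mathcal{C}(\Omega))$ is by definition precisely the preimage $\pi_0^{-1}(\operatorname{Dens}_{\eich,\Ce}(\Omega))$. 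The whole statement then reduces to checking that this restriction inherits the submersion and isometry properties.

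First I would record the linear-algebraic heart of the argument at a fixed point $p \in \operatorname{Aut}_{\eich,\Ce}(\mathcal{C}(\Omega))$ with image $q = \pi_0(p)$. Since $\operatorname{Aut}_{\eich,\Ce}(\mathcal{C}(\Omega)) = \pi_0^{-1}(\operatorname{Dens}_{\eich,\Ce}(\Omega))$, its tangent space is $T_p \operatorname{Aut}_{\eich,\Ce}(\mathcal{C}(\Omega)) = (d\pi_0)_p^{-1}\big(T_q \operatorname{Dens}_{\eich,\Ce}(\Omega)\big)$; in particular the vertical space $V_p = \ker (d\pi_0)_p$ is contained in it, so the fibers of the restricted map coincide with those of $\pi_0$. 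Consequently the vertical space of $\pi_0|_{\operatorname{Aut}_{\eich,\Ce}}$ is again $V_p$, and the horizontal space---the orthogonal complement of $V_p$ inside $T_p\operatorname{Aut}_{\eich,\Ce}(\mathcal{C}(\Omega))$ with respect to the restricted flat $L^2(\rho_0)$ metric---is exactly $H_p \cap T_p\operatorname{Aut}_{\eich,\Ce}(\mathcal{C}(\Omega))$, where $H_p = V_p^\perp$ is the original horizontal space. This is the key identity: because $V_p \subseteq T_p\operatorname{Aut}_{\eich,\Ce}(\mathcal{C}(\Omega))$, taking the orthogonal complement and intersecting with the subspace commute.

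With this in hand the isometry property is immediate. From the Gallou\"et--Vialard result, $(d\pi_0)_p$ maps $H_p$ isometrically onto $T_q \operatorname{Dens}(\Omega)$ equipped with the WFR metric. Since $(d\pi_0)_p$ is injective on $H_p$ and surjects onto $T_q\operatorname{Dens}(\Omega) \supseteq T_q\operatorname{Dens}_{\eich,\Ce}(\Omega)$, its restriction to $H_p \cap T_p\operatorname{Aut}_{\eich,\Ce}(\mathcal{C}(\Omega))$ is a linear isomorphism onto $T_q\operatorname{Dens}_{\eich,\Ce}(\Omega)$; being the restriction of an isometry to a subspace, it is itself an isometry onto $T_q\operatorname{Dens}_{\eich,\Ce}(\Omega)$ with the metric induced from the WFR metric. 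By Theorem~\ref{thm:constrained_WFR_metric} this induced metric is precisely the constrained WFR Riemannian metric, which identifies the target geometry with the one in the statement, and the same isomorphism gives the surjectivity of $(d\pi_0)_p$ needed for the submersion property.

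I expect the genuinely delicate point to be not the algebra above---which is purely pointwise and therefore transfers verbatim to the formal, infinite-dimensional setting---but rather the verification that $\operatorname{Dens}_{\eich,\Ce}(\Omega)$ is a bona fide (formal) submanifold and that $\operatorname{Aut}_{\eich,\Ce}(\mathcal{C}(\Omega)) = \pi_0^{-1}(\operatorname{Dens}_{\eich,\Ce}(\Omega))$ is smooth with the expected tangent space. Since $\operatorname{Dens}_{\eich,\Ce}(\Omega)$ is the trace of the affine subspace $\MplushC$ on $\operatorname{Dens}(\Omega)$, its tangent space at $\rho$ is the linear subspace $\{\sigma : \int_\Omega \eich\, d\sigma = 0\}$, so it is a smooth submanifold provided the constraint functionals are (formally) submersive, i.e.\ the component functions $\eich_i$ are linearly independent as functionals on tangent densities---the same nondegeneracy condition that makes the Gram matrix $G_{\rho_t}$ invertible in \eqref{eq:opt_cond2}. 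The preimage $\operatorname{Aut}_{\eich,\Ce}(\mathcal{C}(\Omega))$ is then a submanifold by the formal submersion theorem, exactly as in the unconstrained construction of~\cite{gallouet2018camassa}, to which we defer for the analytic details.
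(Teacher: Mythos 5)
Your proposal is correct and follows essentially the same route as the paper, which contents itself with invoking the unconstrained submersion of Gallou\"{e}t--Vialard~\cite{gallouet2018camassa} and asserting that the construction ``can be readily adapted'' to the constrained setting. In fact, your write-up supplies precisely the detail the paper omits: the pointwise linear-algebra fact that a (formal) Riemannian submersion restricts to one over the preimage of a submanifold of the base, since the vertical spaces coincide and taking orthogonal complements commutes with intersecting against a subspace that contains the vertical space.
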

The above result allows us to introduce a static (Monge) formulation of the constrained WFR metric:
\begin{definition}[Monge-formulation]
Let $\eich:\Omega \to \mathbb{R}^d$ and $\Ce\in \mathbb{R}^d$. For $\rho_0,\rho_1\in \Dens_{\eich,\Ce}(\Omega)$ the Monge formulation of the constrained WFR is given by
\begin{equation}
\operatorname{M}\text{-}\WFRAF(\rho_0,\rho_1)=\inf_{\varphi,f}\left\{d_{\operatorname{Aut}_{\eich,\Ce}}\big((\operatorname{id},1),(\varphi,f)\big): \varphi_*(f^2 \rho_0)=\rho_1\right\},
\end{equation}
where $d_{\operatorname{Aut}_{\eich,\Ce}}$ denotes the geodesic distance of the $L^2$-metric on the automorphism group restricted to the submanifold $\operatorname{Aut}_{\eich,\Ce}(\mathcal{C}(N))$. 
\end{definition}
It is clear that we have $\operatorname{M}\text{-}\WFRAF(\rho_0,\rho_1)\geq \WFRAF(\rho_0,\rho_1)$ as the corresponding infimum for the constrained WFR distance is taken over a much larger space. For the unconstrained version, it has been shown in~\cite{gallouet2021regularity}, that under certain assumption{s} on $\rho_0$ and $\rho_1$ and the underlying domain $\Omega$, the two formulations are indeed equivalent, i.e., that one has 
$\operatorname{M}\text{-}\WFRAF(\rho_0,\rho_1)=\WFRAF(\rho_0,\rho_1)$. In future work{,} it would be interesting to obtain a similar statement for the constrained version, but this seems difficult as the induced geodesic distance on the constrained automorphism group is significantly more complicated than in the unconstrained case, where this reduces to the standard $L^2$-metric on the cone.

\subsection{Case of probability densities: connection with the spherical Hellinger-Kantorovich metric}\label{ex:WFR_prob}
We conclude this section by specifying the above result to one important particular case, namely the restriction of the WFR metric to the space of probability densities. In the following, let $\operatorname{Prob}(\Omega)$ denote the space of all probability densities. The restriction of the WFR metric onto this space {was} first studied by Laschos and Mielke in~\cite{laschos2019geometric}, where they proved that the space of all densities equipped with the Wasserstein-Fisher-Rao metric can be viewed as a conic extension over the space of probability densities, with the restriction of this (formal) Riemannian metric. This in turn allowed them to derive an explicit formula for the geodesic distance of the Wasserstein-Fisher-Rao metric on the space of proba{b}ility densities, which they refer to as the \emph{spherical Hellinger-Kantorovich distance}. 

We first observe that $\operatorname{Prob}(\Omega)=\operatorname{Dens}_{1,1}(\Omega)$, i.e., this case fits the present article's setting by choosing $H(x)=1$ and $C=1$. Perhaps the most direct way to evidence the connection with the spherical Hellinger-Kantorovich model is to examine the geodesic equations \eqref{eq:opt_cond2} for the specific metric $\operatorname{WFR}^{1,1}_\delta$. In that case, since $d=1$ and $H=1$, $\nabla H =0$ and one finds $g(t) = \int_{\Omega} \Phi(t,x) \rho_t(x) dx$ from which it follows that $\bar{\Phi}(t,\cdot)$ is here the constant function of $x$ given by the expected value of $\Phi$ on $\Omega$ with respect to the density $\rho_t$. Then \eqref{eq:opt_cond2} becomes:
\begin{equation*}
\left\{
\begin{aligned}
&\partial_t \Phi + \frac{1}{2} \|\nabla \Phi\|^2 +\frac{1}{2\delta^2} \left(\Phi - \bar{\Phi}\right)^2 =0 \\
&\partial_t \rho +\nabla \cdot(\rho \nabla \Phi) = \frac{1}{\delta^2}\left(\Phi-\bar{\Phi}\right)\rho
\end{aligned}
\right.
\end{equation*}
which precisely matches, up to changing $\Phi$ to $-\Phi$, the geodesic equations of the spherical Hellinger-Kantorovich metric established in \cite{jing2024machine} (Section 3.2). Therefore, the result of~\cite{laschos2019geometric} can be reframed within our framework as follows:
\begin{theorem}[Theorem 3.4 of~\cite{laschos2019geometric}]
The space $(\Dens,\operatorname{WFR}_{\delta})$ of positive densities equipped with the WFR distance can be identified with the cone over the spherical space $\operatorname{Prob}(M)$ equipped with the distance corresponding to the constrained, unbalanced metric $\operatorname{WFR}^{1,1}_\delta$.
\end{theorem}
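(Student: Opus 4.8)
The plan is to make the conic decomposition explicit at the level of the (formal) Riemannian metric and then pass to geodesic distances. Every positive density $\rho\in\Dens$ factors uniquely as $\rho=m\sigma$ with total mass $m=\rho(\Omega)\in\R_{>0}$ and normalized measure $\sigma=\rho/m\in\Prob$, giving a bijection $\Dens\cong\R_{>0}\times\Prob$ which I aim to upgrade to an isometry onto the cone over $(\Prob,\operatorname{WFR}^{1,1}_\delta)$. First I would record the pointwise form of the WFR Riemannian inner product: for a tangent vector $\dot\rho$ realized by controls via $\dot\rho=-\nabla\cdot\omega+\zeta$, the optimal controls are $\omega=\rho\nabla\phi$ and $\zeta=\delta^{-2}\rho\phi$ for a potential $\phi$, so that $g_\rho(\dot\rho,\dot\rho)=\tfrac12\int_\Omega(\|\nabla\phi\|^2+\delta^{-2}\phi^2)\,\rho$, with $\phi$ determined by $\dot\rho=-\nabla\cdot(\rho\nabla\phi)+\delta^{-2}\rho\phi$.

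Second, I would establish the $g_\rho$-orthogonal splitting of $T_\rho\Dens$ into the radial line $\{c\rho:c\in\R\}$ and the mass-preserving hyperplane $\{\dot\rho:\int_\Omega\dot\rho=0\}$. The radial direction is realized by a constant potential ($\omega=0$, $\zeta=c\rho$), and polarizing the quadratic form gives $g_\rho(c\rho,\dot\rho)=\tfrac{c}{2}\int_\Omega\rho\phi=\tfrac{c\delta^2}{2}\int_\Omega\zeta$; integrating the constraint over $\Omega$ (the divergence term drops) shows $\int_\Omega\dot\rho=\int_\Omega\zeta$, so this cross term is proportional to the total mass change and therefore vanishes exactly on mass-preserving vectors. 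This orthogonality is precisely the probability constraint $\int_\Omega\zeta=0$ arising from $H=1$, $C=1$ in \eqref{eq:constraint_controls}, so the mass-preserving block is intrinsically the constrained problem defining $\operatorname{WFR}^{1,1}_\delta$.

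Third, I would compute the two induced metrics. Along a pure radial path $\rho_t=m(t)\sigma$ the energy is $\tfrac{\delta^2\dot m^2}{2m}$, so the change of variable $r=\delta\sqrt{2m}$ turns it into $\dot r^2$; along a mass-preserving path $\rho_t=m\sigma_t$ the controls rescale as $(\omega_\rho,\zeta_\rho)=(m\omega_\sigma,m\zeta_\sigma)$ and the energy equals $m$ times the $\operatorname{WFR}^{1,1}_\delta$ energy of $\sigma_t$, i.e.\ $\tfrac{r^2}{2\delta^2}$ times it. Hence, in the coordinates $(r,\sigma)$, the WFR metric is exactly the cone metric $dr^2+r^2g_{\mathrm{Prob}}$ with $g_{\mathrm{Prob}}=\tfrac{1}{2\delta^2}g_{\operatorname{WFR}^{1,1}_\delta}$.

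Finally, I would invoke the general principle that the geodesic distance of a metric cone $dr^2+r^2g_X$ is the cone distance over the base geodesic distance $d_X$ (with the angular distance truncated at $\pi$) to conclude that $\operatorname{WFR}_\delta$ is the cone distance over $(\Prob,\operatorname{WFR}^{1,1}_\delta)$. This last step is the crux and the main obstacle: it must be carried out at the level of distances rather than formal metrics, which requires showing that optimal paths between positive measures decompose into a radial part and a reparametrized spherical part and that the cone tip (total extinction of mass) is avoided along optimizers. This is exactly the content of Theorem 3.4 of \cite{laschos2019geometric}; having already matched our geodesic equations \eqref{eq:opt_cond2} with the spherical Hellinger–Kantorovich equations of \cite{jing2024machine}, I would identify $\operatorname{WFR}^{1,1}_\delta$ with their spherical Hellinger–Kantorovich distance and quote their result to finish.
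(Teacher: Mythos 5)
Your proposal is correct in its formal computations and ends, like the paper, by quoting Theorem 3.4 of \cite{laschos2019geometric} for the distance-level statement; however, the route you take to identify the constrained metric with the spherical structure is genuinely different from the paper's. The paper's own justification is shorter and purely equation-based: it specializes the constrained geodesic system \eqref{eq:opt_cond2} to $H=1$, $C=1$ (so that $\bar{\Phi}(t,\cdot)$ becomes the $\rho_t$-mean of $\Phi(t,\cdot)$), observes that the resulting PDE system coincides with the spherical Hellinger--Kantorovich geodesic equations of \cite{jing2024machine}, and then reads off the cone identification from \cite{laschos2019geometric}. You instead exhibit the cone structure directly at the level of the formal Riemannian metric: the $g_\rho$-orthogonal splitting of the tangent space into the radial line and the mass-preserving hyperplane (correctly recognized as the $H=1$ constraint \eqref{eq:constraint_controls}), the change of variables $r=\delta\sqrt{2m}$, and the resulting cone metric $dr^2+r^2 g_{\mathrm{Prob}}$ with $g_{\mathrm{Prob}}=\tfrac{1}{2\delta^2}\,g_{\operatorname{WFR}^{1,1}_\delta}$. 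What your route buys: it pins down the exact conformal factor relating the constrained WFR metric to the spherical base metric, and the orthogonality computation is closer in spirit to the Otto-submersion picture of Section~\ref{ssec:Otto_picture}, making the ``conic extension'' claim concrete rather than implicit. What the paper's route buys: it is shorter, connects directly to published geodesic equations, and sidesteps the delicate point you rightly flag as the crux---that passing from a formal cone metric to an identification of geodesic \emph{distances} (truncation of the angular distance at $\pi$, avoidance of the cone tip along optimizers) cannot be carried out formally and must be delegated to \cite{laschos2019geometric}. Neither your argument nor the paper's is a self-contained proof; both are reframings of the cited theorem, and yours is a legitimate and somewhat more informative one.
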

To the best of our knowledge{,} the analogue of Otto's submersion picture, as obtained in Section~\ref{ssec:Otto_picture}, is new even for the specific situation of unbalanced OMT constrained to the space of probability densities. In the following{,} we will briefly discuss Proposition \ref{prob:otto} for this special case, where it remains to investigate the constrained space of automorphisms $\operatorname{Aut}_{1,1}(\mathcal{C}(\Omega))$.
\begin{corollary}[Otto's Riemannian submersion for the spherical Hellinger-Kantorovich distance]
Let $\mathcal S_1$ be the $L^2(\rho_0)$ unit sphere in $C^{\infty}(\Omega)$.
Then the projection
\begin{align*} \pi_0: \operatorname{Diff}(\Omega)\ltimes\mathcal S_1 & \to \operatorname{Prob}(\Omega) \text{ given by } \\ (\varphi,f) & \mapsto \varphi_*(f^2\rho_0)
\end{align*} 
 is a (formal) Riemannian submersion, where $\operatorname{Diff}(\Omega)\ltimes\mathcal S_1$ is equipped with the restriction of the (flat) $L^2(\rho_0)$ Riemannian metric{~\eqref{eq:coneL2metric}} and $\operatorname{Prob}(\Omega)$ is equipped with the (formal) Riemannian metric corresponding to the spherical Hellinger-Kantorovich distance.
\end{corollary}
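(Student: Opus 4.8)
The plan is to obtain this corollary as the specialization of Proposition~\ref{prob:otto} to the affine data $\eich = 1$ and $\Ce = 1$, for which the general constrained submersion picture is already established; the only genuine work is to make the two spaces in the statement explicit and to name the induced metrics. First I would record that $\operatorname{Dens}_{1,1}(\Omega) = \operatorname{Dens}(\Omega) \cap \Meas^+_{1,1}(\Omega)$ is precisely $\operatorname{Prob}(\Omega)$, since the affine constraint $\int_\Omega 1 \, d\rho = 1$ exactly says that $\rho$ is a probability density; this is the observation already recorded above that $\operatorname{Prob}(\Omega) = \operatorname{Dens}_{1,1}(\Omega)$.

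Next I would identify the constrained automorphism group $\operatorname{Aut}_{1,1}(\mathcal{C}(\Omega)) = \pi_0^{-1}(\operatorname{Prob}(\Omega))$ explicitly. For $(\varphi,f) \in \operatorname{Aut}(\mathcal{C}(\Omega)) = \operatorname{Diff}(\Omega) \ltimes C^\infty(\Omega,\R^+)$, pushforward by the diffeomorphism $\varphi$ preserves total mass, so the image measure has mass
\[
\int_\Omega d\big(\varphi_*(f^2 \rho_0)\big) = \int_\Omega f^2 \, d\rho_0 = \|f\|_{L^2(\rho_0)}^2 .
\]
Hence the membership condition $\pi_0(\varphi,f) \in \operatorname{Prob}(\Omega)$ is equivalent to $\|f\|_{L^2(\rho_0)} = 1$ and imposes no restriction on $\varphi$. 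Since $f$ already ranges over $C^\infty(\Omega,\R^+)$, this selects exactly the positive unit sphere $\mathcal{S}_1$, giving $\operatorname{Aut}_{1,1}(\mathcal{C}(\Omega)) = \operatorname{Diff}(\Omega) \ltimes \mathcal{S}_1$, which is the source space in the statement. Surjectivity of $\pi_0$ onto $\operatorname{Prob}(\Omega)$ is transparent: for a target density $\rho$ one takes $\varphi = \operatorname{id}$ and $f = \sqrt{\rho/\rho_0}$, which lies in $\mathcal{S}_1$ because $\int_\Omega f^2 \rho_0 = \int_\Omega \rho = 1$.

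It then remains to identify the metrics on both sides. On the source, the object produced by Proposition~\ref{prob:otto} is by construction the restriction of the flat $L^2(\rho_0)$ metric on $\operatorname{Aut}(\mathcal{C}(\Omega))$ to $\operatorname{Diff}(\Omega) \ltimes \mathcal{S}_1$, matching the statement. On the target, Proposition~\ref{prob:otto} equips $\operatorname{Dens}_{1,1}(\Omega)$ with the restriction of the (formal) Riemannian WFR metric; by the theorem of Laschos and Mielke recalled above~\cite{laschos2019geometric} — equivalently, by the matching of the geodesic equations~\eqref{eq:opt_cond2} (in the case $d=1$, $H=1$) with those of~\cite{jing2024machine} — the geodesic distance of this restricted metric on $\operatorname{Prob}(\Omega)$ is exactly the spherical Hellinger-Kantorovich distance. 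Substituting these two identifications into Proposition~\ref{prob:otto} yields the asserted submersion.

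I expect no new obstacle here beyond what Proposition~\ref{prob:otto} already absorbs: the one delicate point in the general statement is that the horizontal distribution of the restricted submersion must be the intersection of the ambient horizontal distribution with the tangent space to the constrained total space, and one must check this is compatible with the fibers of $\pi_0$. For the present specialization this is especially benign, because the constraint $\|f\|_{L^2(\rho_0)} = 1$ involves only the $f$-factor of the semi-direct product and is invariant along the $\operatorname{Diff}(\Omega)$-directions; consequently the main effort is the bookkeeping of the preceding paragraphs rather than a new geometric estimate.
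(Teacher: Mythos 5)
Your proposal is correct and follows essentially the same route as the paper's own proof: the key step in both is the observation that pushforward by a diffeomorphism preserves total mass, so the constraint $\varphi_*(f^2\rho_0)\in \operatorname{Prob}(\Omega)$ reduces to $\int_\Omega f^2\,d\rho_0 = 1$, identifying $\operatorname{Aut}_{1,1}(\mathcal{C}(\Omega))$ with $\operatorname{Diff}(\Omega)\ltimes\mathcal{S}_1$ and letting Proposition~\ref{prob:otto} do the rest. Your additional remarks on surjectivity and on identifying the restricted WFR metric with the spherical Hellinger-Kantorovich metric via~\cite{laschos2019geometric} are harmless elaborations of points the paper treats as already established in the surrounding discussion.
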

\begin{proof}
To see that this result follows from Proposition \ref{prob:otto}, we observe that the action of the group of diffeomorphisms on the space of densities preserves the total volume. Thus for $\rho_0\in\operatorname{Prob}(\Omega)$ we have that $\varphi_*(f^2\rho_0)\in \operatorname{Prob}(\Omega)$ if and only if $\int_{\Omega}f^2\rho_0=1$, i.e., if $f$ is an element of the $L^2(\rho_0)$ unit sphere $\mathcal S_1$. Consequently, we can represent the constrained space as the semidirect product $\operatorname{Aut}_{1,1}(\mathcal{C}(\Omega))=\operatorname{Diff}(\Omega)\ltimes\mathcal S_1$. 
\end{proof}

\begin{example}[Higher order moment constraints]
\label{ex:WFR_area_measures}
Beyond imposing constraints on the total mass, one may also consider situations in which some higher order moments may be fixed, for instance the expected value or variance. In the former case, restricting to measures with a prescribed expected value $m \in \mathbb{R}^d$ corresponds to a constraint of the form $\int_\Omega x d\rho(x) = m$ which exactly fits the model of this section by setting $H(x) = x$ and $C=m$. More generally, setting $H(x) = (x_{i_1} - c_1) \ldots (x_{i_n}-c_n)$ for $n \in \mathbb{N}$, $\{i_1,\ldots,i_n\}$ a set of indices in $\{1,\ldots,d\}$ and $c_1,\ldots,c_n \in \mathbb{R}$ allows to constrain any specific moment of order $n$ to a given value. The results of this section show that the corresponding metrics $\WFRAF$ can be used to compare and interpolate between measure distributions in these constrained subspaces (assuming finite energy paths exist, notably under the conditions of Theorem \ref{thm: Affh0 Finite Energy Path_time_constant}).    
\end{example}

\section{Unbalanced optimal transport with time-dependent constraints}
Having previously examined the case of constraints which are stationary in time, we seek to extend this analysis to time-dependent ones. We should point out right away that, in this case, one cannot expect the metric setting of Section \ref{sec:constrained_WFR_distance} to generalize and our primary focus will be on the issue of existence of solutions to the constrained unbalanced optimal transport problem. We introduce the following variant of the constrained, unbalanced optimal transport energy, which obviously generalizes the problem in Definition \ref{def:WFR_problem_constant_time}.

\begin{definition}[WFR problem with time-varying constraints]\label{def:wfr_problem_time_varying}
For functions $\eich:[0,1] \times \Omega \to \R^d$ and $\ef:[0,1] \to \R^d$, we say that a measure $\rho \in \Mplus([0,1] \times \Omega)$ \define{satisfies an affine constraint} with respect to $\eich$ and $\ef$ if it disintegrates in time ($\rho = dt \otimes \rho_t$) and 
\begin{equation}
\int_\Omega \Htx d\rho_t(x) = \Ft \qquad \mbox{for a.e. $t \in [0,1]$}.
\end{equation}
In this case, we write $\rho \in \AHF$. For $\rho_0, \rho_1\in \Mplus(\Omega)$, we consider the constrained set of measures
\[
\CEHF:= \left\{\mu = (\rho,\omega,\zeta) \in \mathcal{CE}(\rho_0,\rho_1) \mid \rho \in \AHF\right\}.
\]
We define the  \textbf{Wasserstein-Fisher-Rao energy with time-varying affine-constraints} by
\begin{equation}
\label{eqn: constWFR_affine}
\WFRAF\rhosq = \inf \left\{ \int_{[0,1]\times \Omega} f_\delta\left(\frac{d\mu}{d\lambda}\right)d \lambda \middle| 
    \begin{aligned}
         \mu = (\rho,\omega,\zeta)\in \CEHF
    \end{aligned}
    \right\},
\end{equation}
 where $\lambda$ is any nonnegative Borel measure on $[0,1]\times \Omega$ such that $\rho,\omega, \zeta \ll \lambda$. By the homogeneity of $f_\delta$, the integral in the definition does not depend on the choice of $\lambda.$ 
\end{definition}
\begin{example}[Total mass constraints and moving barrier]\label{ex:totalmass_movingbarrier}
Here we want to describe two specific examples, that can be formulated in our setting. The first one,  time dependent total mass constraints, is a direct extension of {Section}~\ref{ex:WFR_prob} to the time dynamic situation, i.e., $\eich=1$ but $\ef$ can be an arbitrary time dependent function that describes the evolution of the total mass. Such a situation naturally occurs in applications, e.g., when modeling a population density over time, where one only observes the total population but not its spatial distribution at intermediate times. Under mild conditions on $\ef$ we will show the existence of minimizers for this model in Theorems~\ref{thm: AffWFRDuality_time} and~\ref{prop:Affh0_Finite_Energy_Path}. 

As a second example, we aim to consider the situation of a domain $\Omega$ with a time-dependent barrier, i.e., a region $\Gamma(t)\subset \Omega$ through which no mass can be transported or created. Assuming that for each $t \in [0,1]$, $\Gamma(t)$ is an open subdomain of $\Omega$ that varies continuously in time, we can formulate this in our setting by choosing $\ef(t)=0$ and $\eich:[0,1] \times \Omega$ a contin{u}ous function with $\eich(t,x)>0$ for $x \in \Gamma(t)$ and $H(t,x)=0$ if $x \in \Omega \setminus \Gamma(t)$. It is indeed clear that the condition $\int_{\Omega} H(t,x) d \rho_t(x) =0$ is then equivalent to $\rho_t(\Gamma(t))=0$, in other words that no mass appears in $\Gamma(t)$. The existence of minimizers for such a model is again guaranteed by Theorems~\ref{thm: AffWFRDuality_time} and~\ref{prop:Affh0_Finite_Energy_Path} below as long as the initial and the terminal measures $\rho_0$ and $\rho_1$ stays inside the barrier at almost all time, i.e., $\rho_0(\Gamma(t))=\rho_1(\Gamma(t))=0$ for a.e. $t$. 
\end{example}

\subsection{Existence of minimizers of the WFR energy with time-varying constraints}\label{sec:existence_of_minimizers_time_varying}
We are now able to state our general results on {the} existence of minimizers. The finite and minimal energy path terminology is defined in analogy with the previous section.

\begin{theorem}[Existence of minimizers for time-varying affine-constraints]
    \label{thm: AffWFRDuality_time}    Let $\rho_0,\rho_1\in\Mplus(\Omega)$, let $\eich:[0,1]\times \Omega \to \mathbb{R}^d$ be a continuous function and let $\ef=(\ef_1,\ldots,\ef_d):[0,1]\to \mathbb{R}^d$ be a function with integrable component functions $\ef_i$. If a finite energy path with respect to $\eich$ and $\ef$ exists, then a minimal energy path exists. 
\end{theorem}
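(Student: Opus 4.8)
The plan is to argue by the direct method of the calculus of variations. Write $\mathcal{E}(\mu)$ for the objective functional in \eqref{eqn: constWFR_affine}. Since a finite energy path is assumed to exist, the infimum is finite; take a minimizing sequence $\mu^k=(\rho^k,\omega^k,\zeta^k)\in\CEHF$, so that $\mathcal{E}(\mu^k)\le E_0$ for some constant $E_0$ and all large $k$. I would then extract a weak-$*$ convergent subsequence, show the limit $\mu^\infty$ still lies in $\CEHF$, and show that the energy does not increase in the limit; the existence of a minimizer follows. The whole scheme parallels the unconstrained WFR existence proof of \cite{chizat2018interpolating}, and the only genuinely new ingredient is the stability of the affine constraint under weak-$*$ limits.

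First I would establish compactness. The energy bound controls the vector and source terms through Cauchy--Schwarz: $|\omega^k|([0,1]\times\Omega)\le\big(2E_0\,\rho^k([0,1]\times\Omega)\big)^{1/2}$ and analogously for $|\zeta^k|$. It therefore suffices to bound the total mass $\rho^k([0,1]\times\Omega)=\int_0^1\rho^k_t(\Omega)\,dt$, which I would obtain from a Gr\"onwall-type estimate on $t\mapsto\rho^k_t(\Omega)$ by testing the continuity equation against functions of $t$ alone and using $E_0$ to bound the source contribution; this mass-growth estimate is independent of the constraint. With uniform total-variation bounds on $\rho^k,\omega^k,\zeta^k$, Banach--Alaoglu yields a subsequence with $\mu^k\rightharpoonup^*\mu^\infty=(\rho^\infty,\omega^\infty,\zeta^\infty)$, and $\rho^\infty\in\Mplus(\Omega)$ by positivity of weak-$*$ limits.

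Next I would check lower semicontinuity and membership in $\CE$. Using the Legendre representation $f_\delta(\rho,\omega,\zeta)=\sup\{a\rho+b\cdot\omega+c\zeta : a+\tfrac12|b|^2+\tfrac1{2\delta^2}c^2\le0\}$, the energy becomes a supremum of weak-$*$ continuous linear functionals, hence is weak-$*$ lower semicontinuous, giving $\mathcal{E}(\mu^\infty)\le\liminf_k\mathcal{E}(\mu^k)$. Since \eqref{eqn:dist_continuity_equation} is linear in $(\rho,\omega,\zeta)$ and tested against a fixed $\phi\in C^1([0,1]\times\Omega)$, passing to the limit shows $\mu^\infty\in\CE$; being of finite energy, $\mu^\infty$ disintegrates in time by Proposition \ref{prop: disintegration in time}.

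The main obstacle, and the only step departing from the unconstrained case, is showing that the affine constraint survives the limit, the difficulty being that weak-$*$ convergence of $\rho^k$ on $[0,1]\times\Omega$ gives no control over individual time-slices $\rho^k_t$. I would circumvent this by recasting the pointwise-in-$t$ constraint in integrated form: for each component $i$ and each $u\in C([0,1])$, disintegration and Fubini give $\int_{[0,1]\times\Omega}u(t)\eich_i(t,x)\,d\rho^k(t,x)=\int_0^1 u(t)\ef_i(t)\,dt$. Because $\eich$ is continuous and $\Omega$ compact, $u(t)\eich_i(t,x)$ is an admissible test function for weak-$*$ convergence, so the left-hand side passes to the limit and $\int_{[0,1]\times\Omega}u\,\eich_i\,d\rho^\infty=\int_0^1 u\,\ef_i\,dt$ for every $u\in C([0,1])$ (integrability of $\ef_i$ ensuring the right-hand side is well defined). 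Applying this to a countable dense family of $u$, together with the disintegration of $\rho^\infty$ and the fundamental lemma of the calculus of variations, recovers $\int_\Omega\eich_i(t,\cdot)\,d\rho^\infty_t=\ef_i(t)$ for a.e.\ $t$, so $\mu^\infty\in\CEHF$. Combined with the lower semicontinuity above, this shows $\mu^\infty$ attains the infimum and is the desired minimal energy path.
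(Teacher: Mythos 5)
Your proposal is correct, but it takes a genuinely different route from the paper. The paper does not use the direct method at all: it applies the Fenchel--Rockafellar theorem (Theorem \ref{thm:Fenchel_Rockafellar}) to a primal problem posed over test functions $(\phi,\psi)\in C^1([0,1]\times\Omega)\times C([0,1])^d$, with $A(\phi,\psi)=\bigl(\partial_t\phi+\sum_{i=1}^d \eich_i\psi_i,\nabla\phi,\phi\bigr)$, $F$ the integral of the convex indicator of $B_\delta$, and $G$ collecting the boundary terms and the pairing $-\sum_i\int_0^1\psi_i\ef_i\,dt$. The affine constraint is thus encoded by the extra dual variable $\psi$: computing $G^*(-A^*y^*)$ shows it is exactly the convex indicator of $\CEHF$, the conjugate $F^*$ is identified with the WFR objective via \cite[Theorem 5]{rockafellar_integrals_1971}, and existence of the minimizer comes for free from the attainment clause of Fenchel--Rockafellar, with no compactness estimates whatsoever. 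Your argument instead works on the measure side: the Gr\"onwall mass bound, the Cauchy--Schwarz total-variation bounds, and weak-$*$ lower semicontinuity replicate the unconstrained existence proof of \cite{chizat2018interpolating}, and your integrated reformulation of the constraint---testing against $u(t)\eich_i(t,x)$, which is an admissible test function precisely because $\eich$ is continuous and $\Omega$ compact, then recovering the a.e.-in-$t$ constraint by the fundamental lemma once the limit is known to disintegrate via Proposition \ref{prop: disintegration in time}---is a correct substitute for the paper's dual variable $\psi$; indeed the two devices are the same pairing seen from the two sides of the duality. As for what each buys: the paper's route additionally produces the dual formulation underlying the optimality certificates of Theorem \ref{thm:opt_certificate}, and its hypotheses on $\eich$ and $\ef$ enter only through well-definedness of $A$ and $G$; your route is more self-contained and quantitative (explicit a priori mass bounds on minimizing sequences), though note that the Legendre representation of the energy that you invoke for lower semicontinuity is itself essentially the same Rockafellar integral-functional result the paper cites, so the two proofs share that ingredient.
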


Note that this theorem also implies the corresponding result for stationary constraints, cf. Theorem~\ref{thm: AffWFRDuality}. The proof of the theorem uses the Fenchel-Rockafellar theorem from convex analysis, which we state here for the convenience of the reader.
\begin{theorem}[Fenchel-Rockafellar strong duality~\cite{borwein2005techniques}]
\label{thm:Fenchel_Rockafellar}
Let $X, Y$ be Banach spaces, let ${\mathcal G}:X\to \mathbb{R}\cup \{+\infty\}, {\mathcal F}:Y\to \mathbb{R}\cup \{+\infty\}$ be convex functions and let $A:X\to Y$ be a bounded linear map. If there exists $x_0\in X$ such that $\mathcal{G}(x_0)<\infty, \mathcal{F}(Ax_0)<\infty $ and $\mathcal{F}$ is continuous at $Ax_0$, we have
\begin{equation}
    \inf_{x\in X}\{{\mathcal F}(Ax)+{\mathcal G}(x)\} = \sup_{y^*\in Y^*}\{-{\mathcal F}^*(y^*)-{\mathcal G}^*(-A^*y^*)\}
\end{equation}
and there exists a $y^*\in Y^*$ which attains the sup on the right-hand side if it is finite.
\end{theorem}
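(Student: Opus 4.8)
The plan is to prove strong duality by the classical \emph{perturbation} (value function) technique, which reduces the entire statement to the subdifferentiability of a single scalar convex function at one point; this is precisely where the constraint qualification is used. I would begin by disposing of the easy half, \emph{weak duality}. For arbitrary $x\in X$ and $y^*\in Y^*$, the Fenchel--Young inequality gives $F(Ax)\ge \langle y^*,Ax\rangle - F^*(y^*)$ and $G(x)\ge \langle -A^*y^*,x\rangle - G^*(-A^*y^*)$. Adding these and using $\langle y^*,Ax\rangle=\langle A^*y^*,x\rangle$, the two linear terms cancel, so $F(Ax)+G(x)\ge -F^*(y^*)-G^*(-A^*y^*)$. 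Taking the infimum over $x$ and the supremum over $y^*$ yields $\inf\ge\sup$, and it remains only to establish the reverse inequality together with attainment.

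Next I would introduce the value function $h:Y\to\mathbb{R}\cup\{\pm\infty\}$ defined by $h(y):=\inf_{x\in X}\{F(Ax+y)+G(x)\}$, so that $h(0)$ is the primal value. Since $(x,y)\mapsto F(Ax+y)+G(x)$ is jointly convex (a convex function precomposed with an affine map, plus a convex function), its partial minimization $h$ is convex. A change of variables $z=Ax+y$ in the definition of the conjugate decouples the two suprema and gives $h^*(\eta)=F^*(\eta)+G^*(-A^*\eta)$; hence $h^{**}(0)=\sup_{\eta}\{-h^*(\eta)\}$ is exactly the dual value. Thus strong duality is equivalent to the identity $h(0)=h^{**}(0)$, and a dual maximizer is precisely an element of $\partial h(0)$, by the Fenchel--Young equality $\eta\in\partial h(0)\iff h(0)+h^*(\eta)=0$.

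It then suffices to show $\partial h(0)\ne\emptyset$, and here the hypotheses enter. Because $F$ is continuous at $Ax_0$, it is bounded above, say by $M$, on some neighborhood $Ax_0+V$ of $Ax_0$. Then for every $y\in V$ we have $h(y)\le F(Ax_0+y)+G(x_0)\le M+G(x_0)<\infty$, so $h$ is bounded above on a neighborhood of $0$. If $h(0)=-\infty$, the primal value is $-\infty$, weak duality forces the dual value to be $-\infty$ as well, equality holds, and the attainment clause is vacuous; so I may assume $h(0)$ is finite. A convex function bounded above on a neighborhood of a point is continuous there, hence $h$ is continuous at $0$, and a convex function finite and continuous at a point is subdifferentiable there: separating the point $(0,h(0))$ from the interior of the epigraph of $h$ (nonempty by continuity) via the geometric Hahn--Banach theorem produces a supporting functional which, by continuity, lies in $Y^*$. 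This gives $\eta\in\partial h(0)\subseteq Y^*$, whence $-F^*(\eta)-G^*(-A^*\eta)=-h^*(\eta)=h(0)$; combined with weak duality, this proves equality of the two problems and exhibits $\eta$ as a dual maximizer.

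I expect the \textbf{main obstacle} to be the last two steps, namely passing from the constraint qualification to the nonemptiness of $\partial h(0)$. In infinite dimensions this is a genuine Hahn--Banach argument: one must guarantee that the epigraph of $h$ has nonempty interior---which is exactly what boundedness above on a neighborhood, inherited from the continuity of $F$ at $Ax_0$, provides---and, more delicately, that the separating functional is \emph{continuous}, i.e. belongs to $Y^*$ rather than being merely algebraically linear. The finite-dimensional shortcut through relative interiors is unavailable, so the topological continuity hypothesis on $F$ cannot be dropped, and it is precisely this point that makes the infinite-dimensional statement nontrivial.
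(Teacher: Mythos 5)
Your proof is correct. Note that the paper itself does not prove this statement: it is imported as a black box, with a citation to Borwein--Zhu, and used only as a tool in the proof of Theorem~\ref{thm: AffWFRDuality_time}, so there is no in-paper argument to match against. Your route is the classical perturbation (value-function) proof, as in Ekeland--Temam or Brezis: weak duality via Fenchel--Young; the identity $h^*(\eta)=F^*(\eta)+G^*(-A^*\eta)$ for $h(y)=\inf_x\{F(Ax+y)+G(x)\}$, obtained by the change of variables $z=Ax+y$; and the constraint qualification used exactly once, to make $h$ bounded above near $0$, hence continuous, hence subdifferentiable, with $\partial h(0)$ identified as the set of dual maximizers. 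This differs from the cited source: Borwein and Zhu's book deliberately avoids Hahn--Banach separation and derives Fenchel duality from a decoupling lemma built on smooth variational principles, which buys generality in the Banach-space machinery at the cost of heavier infrastructure; your separation argument is more elementary and self-contained, and it is what the constraint qualification in the statement is tailored to. Two details you handle well: the hypothesis forces $h(0)\le F(Ax_0)+G(x_0)<+\infty$, so the only degenerate case is $h(0)=-\infty$, which you correctly dispatch by weak duality with the attainment clause vacuous; and attainment requires no biconjugation or lower semicontinuity of $h$, since $\eta\in\partial h(0)$ directly yields $-h^*(\eta)=h(0)$, which weak duality then pins as the common optimal value. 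The one step you gloss---that $h$ finite at $0$ and bounded above near $0$ cannot take the value $-\infty$ anywhere nearby (else convexity would propagate $-\infty$ to $h(0)$), which is needed before invoking the ``bounded above implies continuous'' lemma for convex functions---is standard and harmless.
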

\begin{proof}[Proof of Theorem \ref{thm: AffWFRDuality_time}]
    To apply the Fenchel-Rockafellar theorem, we introduce the primal problem
    \begin{equation}
    \inf_{x=(\phi,\psi)\in X}\{{\mathcal F}(Ax)+{\mathcal G}(x)\},
\end{equation}
with the following spaces and functionals:
\begin{itemize}[leftmargin=*]
    \item The Banach spaces $X$ and $Y$ are 
    \[
    X= C^1([0,1]\times \Omega)\times C([0,1])^d \quad \mbox{and} \quad Y= C([0,1]\times \Omega)\times C([0,1]\times \Omega)^n\times C([0,1]\times \Omega),
    \]
    respectively.
    \item The convex function ${\mathcal F}: Y\to \mathbb{R}\cup \{+\infty\}$ is  defined by
    \[
    {\mathcal F}(\alpha,\beta,\gamma) = \int_{[0,1]\times \Omega}\iota_{B_\delta}(\alpha(t,\theta),\beta(t,\theta),\gamma(t,\theta))dtd\theta,
    \]
    where 
    \begin{equation}
        B_{\delta} = \left\{(a,b,c)\in \mathbb{R}\times \mathbb{R}^d\times \mathbb{R} \middle| a+\frac{1}{2}\left(\|b\|^2+\frac{c^2}{\delta^2}\right) \leq 0\right\}
    \end{equation}
    and $\iota_{B_\delta}$ is the convex indicator function of $B_\delta$; that is,\\
    \begin{equation}
         \iota_{B_\delta}(a,b,c)  = \begin{cases}
                                        0 & (x,y,z)\in B_{\delta} \\
                                        +\infty & \textrm{otherwise.}
                                    \end{cases}
    \end{equation}
    Note that the Fenchel conjugate of $\iota_{B_\delta}$ is the Wasserstein-Fisher-Rao infinitesimal cost $f_\delta$ used to define $\WFRAF$.

    \item The convex function ${\mathcal G}:X\to \mathbb{R}\cup \{+\infty\}$ is defined by
    \[
    {\mathcal G}(\phi,\psi) = -\sum_{i=1}^{d}\int_{0}^{1}\psi_i(t)\ef_i(t)dt+\int_{\Omega}\phi(0,\cdot)d\rho_0 - \int_{\Omega}\phi(1,\cdot)d\rho_1.
    \]
    where $\psi_i$ and {$\ef_i$} are the $i$-th component of $\psi$ and {$\ef$}, respectively.
    \item The bounded linear map $A:X\to Y$ is defined by 
    \[
    A(\phi,\psi) = \left(\partial_{t}\phi+\sum_{i=1}^{d}\eich_i\psi_i, \nabla\phi, \phi\right).
    \]
\end{itemize}
Theorem \ref{thm:Fenchel_Rockafellar} indeed applies to this problem as $x_0=(\phi_0,\psi_0)$, where $\phi_0(t) = -\delta^2t$ and $\psi_0(t) = (0,\cdots ,0)$, satisfies the necessary conditions. Therefore, 
\begin{equation}\label{eqn:fenchel_rockafellar_equation}
    \inf_{x\in X}\{{\mathcal F}(Ax)+{\mathcal G}(x)\} = \sup_{y^*\in Y^*}\{-{\mathcal F}^*(y^*)-{\mathcal G}^*(-A^*y^*)\}
\end{equation}
By the Riesz-Markov-Kakutani Theorem \cite[Theorem 7.17]{folland2013real}, we can identify 
\[
Y^* \approx \mathcal{M}([0,1]\times \Omega)\times\mathcal{M}([0,1]\times \Omega)^n\times  \mathcal{M}([0,1]\times \Omega).
\]
Our goal is to calculate each term on the right-hand side inside the supremum in \eqref{eqn:fenchel_rockafellar_equation}. We start with the $G^\ast$ term. In the following, we always use $\langle\cdot,\cdot\rangle$ to denote the canonical pairing of a vector space with its dual. We have
\begin{align}
    {\mathcal G}^*(-A^*y^*) &= \sup_{(\phi,\psi)=x\in X}\{\langle -A^*y^*,x\rangle-{\mathcal G}(x)\} = \sup_{x\in X}\left\{-\langle y^*,Ax\rangle-{\mathcal G}(x)\right\} \\ 
    % &=\sup_{x\in X}\left\{-(y^*,Ax)-\left(-\int_{0}^{1}\psi(t)f(t)dt+\int_{\Omega}\phi(0,\cdot)d\rho_1-\int_{\Omega}\phi(1,\cdot)d\rho_0\right) \right\} \\ 
    &=\sup_{x\in X}\left\{-\langle y^*,Ax\rangle+\sum_{i=1}^{d}\int_{0}^{1}\psi_i(t)\ef_i(t)dt+\int_{\Omega}\phi(1,\cdot)d\rho_1-\int_{\Omega}\phi(0,\cdot)d\rho_0\right\}.
\end{align}
Using the notation $y^*=(\rho,m,\zeta)$, we have, for $x = (\phi,\psi)\in X$,
\begin{align}
    \langle y^*,Ax\rangle &= \langle y^*, A(\phi,\psi)\rangle = \langle y^*,(\partial_{t}\phi+\textstyle \sum_{i=1}^{d}\eich_i\psi_i, \nabla\phi, \phi)\rangle \\ &= \int_{[0,1]\times \Omega} \partial_{t}\phi d\rho + \int_{[0,1]\times \Omega}\nabla\phi \cdot d\omega +\int_{[0,1]\times \Omega}\phi d\zeta +\sum_{i=1}^{d}\int_{[0,1]\times \Omega} \psi_i(t)\eich_i(t,x)d\rho \\
    &= \int_{[0,1]\times \Omega} \partial_{t}\phi d\rho + \int_{[0,1]\times \Omega}\nabla\phi \cdot d\omega +\int_{[0,1]\times \Omega}\phi d\zeta +\sum_{i=1}^{d}\int_{0}^{1}\psi_i(t)\int_{\Omega}\eich_i(t,x)d\rho_t dt.   
\end{align}
The calculations so far show that ${\mathcal G}^*(-A^*y)$ equals to
\begin{align*}
    &\sup_{\phi}\left\{-\int_{[0,1]\times \Omega} \partial_{t}\phi d\rho - \int_{[0,1]\times \Omega}\nabla\phi \cdot d\omega -\int_{[0,1]\times \Omega}\phi d\zeta +\int_{\Omega}\phi(1,\cdot)d\rho_1 -\int_{\Omega}\phi(0,\cdot)d\rho_0 \right\} \\
    &\hspace{2in} +\sum_{i=1}^{d}\sup_{\psi_i}\left\{\int_{0}^{1} \left(\ef_i(t)-\int_{\Omega}\eich_i(t,x)d\rho_t(x)\right)\psi_i(t)dt\right\}
\end{align*}
and it follows that 
\[
{\mathcal G}^*(-A^*y) = \left\{\begin{array}{rl}
0 & \mbox{if } (\rho, \omega, \zeta)\in \CEHF \\
+\infty & \mbox{otherwise.}
\end{array}\right.
\]
This implies that
\begin{equation}
    \sup_{y^*\in Y^*}\{-{\mathcal F}^*(y^*)-{\mathcal G}^*(-A^*y^*)\} = -\inf\left\{{\mathcal F}^*(y^*)\middle|\begin{aligned}
     y^* = (\rho,\omega, \zeta) \in \CEHF
\end{aligned}\right\},
\end{equation}
 so the existence of a minimal energy path follows by showing that $F^*(y^*)$ matches the objective of \eqref{eqn: constWFR_affine}. This step is done in the same way as in \cite{chizat2018interpolating}, using \cite[Theorem 5]{rockafellar_integrals_1971}. Following that, since the infimum is finite, we have that there exists $y^*\in \CE$ that attains the infimum by Fenchel-Rockafeller.
\end{proof}

\begin{remark}
It is worth pointing out that the statement and proof of Theorem \ref{thm: AffWFRDuality_time} could be easily adapted to constrained balanced optimal transport as well{,} i.e.{,} to the setting in which $\rho_0$ and $\rho_1$ are in addition assumed to be probability measures and only transport is considered (i.e. $\zeta =0$), assuming again that a finite energy path of probability measures exist. Such a model with a very specific constraint was analyzed in \cite{charon_length_2021}.   
\end{remark}

\subsection{Existence of finite energy paths for the WFR energy with time-varying constraints.}
According to Theorem \ref{thm: AffWFRDuality_time}, to guarantee the existence of a minimal energy path between two measures, it suffices to find a finite energy path. As we shall see, unlike the original unconstrained model, this turns out to be a non-trivial question {and we are unfortunately unable to answer this question in full generality. }
%The proof of these results will be based on constructing paths satisfying the specific constraints, whose Fisher-Rao energy can be controlled. i.e., we entirely ignore the transport aspect of the energy and concentrate solely on the source term.

Let us first evacuate a few straightforward cases:
\begin{itemize}[leftmargin=*]
    \item If $\eich=0$, then there is no feasible solution unless $\ef=0$ for a.e. $t\in[0,1]$.
    \item If $\ef=\eich=0$, then the problem is unconstrained (i.e., $\AFF(0,0) = \Mplus([0,1] \times \Omega)$), so there is a finite energy path, as shown in \cite{chizat2018interpolating}. 
\end{itemize}
Our next goal is to examine the existence of finite energy paths for various relevant choices of $\eich$ and $\ef$. The main results are contained in the following theorems {(Theorems~\ref{prop:Affh0_Finite_Energy_Path} and~\ref{thm:general_finite_energy_existence}); the first one considers vector-valued constraints but assumes them to be linear (i.e. $\ef=0$), whereas the second considers 
general affine constraints but restricts them to be scalar-valued.} 
\begin{theorem}[Finite energy paths for linear vector valued constraints]
    \label{prop:Affh0_Finite_Energy_Path}
    Let $\eich:[0,1]\times \Omega \to \mathbb{R}^d$ be a continuous function and let $\rho_0,\rho_1 \in \mathcal{M}^+(\Omega)$ satisfy $\int_{\Omega}\Htx d\rho_0(x) = 0$ and $\int_{\Omega}\Htx d\rho_1(x) = 0$ for a.e. $t\in [0,1]$. Then there exists a finite energy path in $\mathcal{CE}_{\eich,0}(\rho_0,\rho_1)$ { and we have the upper bound
  \begin{equation}
    \WFRAFho\rhosq \leq 4\delta^2 (\rho_0(\Omega)+\rho_1(\Omega)).
    \end{equation}}
\end{theorem}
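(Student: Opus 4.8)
The plan is to produce an explicit admissible path that uses only the growth/decay (Fisher--Rao) mechanism, with no transport component, so that the homogeneity of the constraint does the work of keeping the path admissible. The key structural fact is that when $\ef=0$ the constraint is preserved under nonnegative combinations: if $\int_\Omega \Htx\,d\rho_0 = 0$ and $\int_\Omega \Htx\,d\rho_1 = 0$ for a.e. $t$, then $\int_\Omega \Htx\,d\rho_t = 0$ for a.e. $t$ whenever $\rho_t = a(t)\rho_0 + b(t)\rho_1$ with $a,b\ge 0$. The naive candidate is the linear interpolation $\rho_t = (1-t)\rho_0 + t\rho_1$ with $\omega=0$; it lies in $\AFF(\eich,0)$, but its Fisher--Rao energy is generically infinite. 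Writing $\rho_0,\rho_1$ via densities $r_0,r_1$ against a common dominating measure, the time integral of $\tfrac{\delta^2(r_1-r_0)^2}{2\rho_t}$ diverges logarithmically at every point where the supports of $\rho_0$ and $\rho_1$ fail to overlap. This divergence is the main obstacle, and it stems from $\rho_t$ vanishing only to first order in $t$ there.

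The remedy is to reweight the interpolation quadratically. Concretely, I would take
\[
\rho = dt\otimes\rho_t, \quad \rho_t = (1-t)^2\rho_0 + t^2\rho_1, \quad \omega = 0, \quad \zeta = dt\otimes\zeta_t, \quad \zeta_t = -2(1-t)\rho_0 + 2t\rho_1.
\]
The endpoints $\rho_0$ (at $t=0$) and $\rho_1$ (at $t=1$) are correct, and each $\rho_t$ is again a nonnegative combination of $\rho_0$ and $\rho_1$, so by the observation above $\rho \in \AFF(\eich,0)$ for a.e. $t$. The family is weak-$*$ continuous in $t$, hence disintegrable as required.

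Next I would verify the continuity equation \eqref{eqn:dist_continuity_equation}. With $\omega=0$ and after Fubini, the left-hand side splits into the $t$-integrals of $\tfrac{d}{dt}\bigl[\phi(t,x)(1-t)^2\bigr]$ tested against $\rho_0$ and of $\tfrac{d}{dt}\bigl[\phi(t,x)t^2\bigr]$ tested against $\rho_1$; these telescope to exactly $\int_\Omega \phi(1,\cdot)\,d\rho_1 - \int_\Omega \phi(0,\cdot)\,d\rho_0$, confirming $(\rho,\omega,\zeta)\in\CE$.

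Finally, the quadratic weights render the energy finite. Since $\omega=0$, the objective is $\int_0^1\int_\Omega \tfrac{\delta^2 \zeta_t^2}{2\rho_t}$, whose integrand in densities equals $2\delta^2\,\tfrac{[(1-t)r_0 - t r_1]^2}{(1-t)^2 r_0 + t^2 r_1}$. A single application of Cauchy--Schwarz (with $p=(1-t)\sqrt{r_0}$, $q=t\sqrt{r_1}$) gives $\tfrac{[(1-t)r_0 - t r_1]^2}{(1-t)^2 r_0 + t^2 r_1} \le r_0 + r_1$, so the integrand is dominated by $2\delta^2(r_0+r_1)$ and the total energy is at most $2\delta^2\bigl(\rho_0(\Omega)+\rho_1(\Omega)\bigr)<\infty$. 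The mismatched-support case is now harmless: near such points $\rho_t$ vanishes to second order in $t$, exactly cancelling the blow-up that doomed the linear path. This exhibits a finite energy element of $\mathcal{CE}_{\eich,0}(\rho_0,\rho_1)$. I would emphasize that—unlike the scalar affine case of Theorem~\ref{thm:general_finite_energy_existence}—no convexity or regularity hypotheses on $\Omega$, $\rho_0$, $\rho_1$ are required, precisely because the homogeneous constraint is compatible with pure growth/decay.
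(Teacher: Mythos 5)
Your proposal is correct, and it follows the same basic strategy as the paper—an explicit pure growth/decay path with $\omega=0$ and quadratic-in-$t$ weights, exploiting the fact that the homogeneous constraint ($\ef=0$) is preserved under nonnegative combinations of $\rho_0$ and $\rho_1$—but the concrete path and the energy estimate differ. The paper takes the concatenation of Fisher--Rao geodesics through the zero measure: $\rho_t=(1-2t)^2\rho_0$ on $[0,1/2]$ and $\rho_t=(2t-1)^2\rho_1$ on $(1/2,1]$, with $\zeta_t=\partial_t\rho_t$. Because each $\rho_t$ is then a scalar multiple of a \emph{single} endpoint measure, the paper's energy computation is an exact, elementary integral yielding $4\delta^2(\rho_0(\Omega)+\rho_1(\Omega))$. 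Your single mixture path $\rho_t=(1-t)^2\rho_0+t^2\rho_1$ avoids collapsing to the zero measure, but its denominator mixes the two densities, so you need the Cauchy--Schwarz bound
\begin{equation}
\frac{\left[(1-t)r_0-t r_1\right]^2}{(1-t)^2 r_0+t^2 r_1}\leq r_0+r_1,
\end{equation}
which is correct and gives the sharper estimate $\WFRAFho\rhosq\leq 2\delta^2\left(\rho_0(\Omega)+\rho_1(\Omega)\right)$, improving the constant in the paper's corollary by a factor of $2$. Your verification of the continuity equation (the integrands telescope to $\frac{d}{dt}[\phi(t,x)(1-t)^2]$ and $\frac{d}{dt}[\phi(t,x)t^2]$) and of the constraint (linearity plus the a.e.\ hypotheses on $\rho_0,\rho_1$) are both sound, and your observation that no convexity of $\Omega$ or regularity of the measures is needed matches the paper. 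In short: same mechanism, different (arguably more natural) interpolant, slightly better quantitative bound at the cost of one extra inequality.
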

%As a direct consequence of the proof of Theorem~\ref{prop:Affh0_Finite_Energy_Path}, we will also obtain the following upper bound for the value of the constrained WFR energy:
\begin{proof}
Similar to the proof of \cite[Theorem 1]{kondratyev2016new}, define $\mu=(\rho,m,\zeta)=(dt \otimes \rho_t, dt \otimes \omega_t, dt \otimes \zeta_t)$ by
    \begin{equation}
        \rho_t = 
        \begin{cases}
            (1-2t)^2\rho_0, & 0\leq t\leq 1/2 \\
            (2t-1)^2\rho_1 & 1/2 < t \leq 1 
        \end{cases}, \quad
        \omega_t = 0, \quad \mbox{and} \quad
        \zeta_t =
        \begin{cases}
            -4(1-2t)\rho_0, & 0\leq t\leq 1/2 \\
            4(2t-1)\rho_1 & 1/2 < t \leq 1.
        \end{cases}
    \end{equation}
We note that $\rho$ is a concatenation of the Fisher-Rao geodesic (\cite{chizat2018interpolating}, Theorem 3.1) from $\rho_0$ to $0$ with the one from $0$ to $\rho_1$. By direct calculation, it is easy to show that $\mu$ satisfies the constraint and the continuity equation. We now calculate the integral. Define $\lambda = \rho$. Clearly, $\rho, \omega, \zeta \ll \lambda$, and we have
\begin{equation}
    d\rho/d\lambda = 1, \quad
    d\omega/d\lambda = 0, \quad \mbox{and } \quad
    d\zeta/d\lambda = 
    \begin{cases}
        -\frac{4}{1-2t} & 0\leq t < 1/2 \\ 
        \frac{4}{2t-1} & 1/2 < t \leq 1.
    \end{cases}
\end{equation}
Therefore, we obtain
\begin{align}
    &\int_{[0,1]\times \Omega} f_{\delta}\left(\frac{d\mu}{d\lambda}\right)d \lambda = \int_{0}^{1} \int_{\Omega} \frac{0^2 + \delta^2(d\zeta/d\rho)^2}{2\cdot 1}d\rho_t dt\\ &= \frac{\delta^2}{2}\int_{0}^{1/2} \int_{\Omega} \left(\frac{-4}{1-2t}\right)^2 (1-2t)^2d\rho_0dt \nonumber +\frac{\delta^2}{2}\int_{1/2}^{1} \int_{\Omega} \left(\frac{4}{2t-1}\right)^2 (2t-1)^2d\rho_1dt = 4\delta^2(\rho_0(\Omega)+\rho_1(\Omega))<\infty.
\end{align}
\end{proof}  
{Next, we will present the existence of finite energy paths for affine scalar constraints:}
\begin{theorem}[Finite energy paths for affine scalar constraints]
\label{thm:general_finite_energy_existence}
    Suppose $\Omega$ is convex. Let $\eich:[0,1]\times \Omega \to \R$ be a differentiable function with bounded partial derivatives such that $\eich(t,x) > 0$ for all $(t,x) \in [0,1] \times \Omega$. {Let $\ef\in H^1([0,1])$, such that $\ef(t) \geq 0$ for all $t \in [0,1]$ and such that 
    \begin{equation}\label{int_assumption}
        \int_0^1 \frac{F'(t)^2}{F(t)}dt< \infty.
    \end{equation}
    Let $\rho_0,\rho_1$ be measures such that $\int_{\Omega}\eich(0,x)d\rho_0 = \ef(0)$ and $\int_{\Omega}\eich(1,x)d\rho_1 = \ef(1)$. Then, there exists a finite energy path in $\CEHF$. }
\end{theorem}

\begin{remark}[Decay Conditions for $\ef$]
{In this remark we will explore  assumption~\eqref{int_assumption} in more detail. We first note that this is clearly satisfied if $\ef(t)>0$ for all $t\in[0,1]$; using that the supremum norm of $\frac{1}{\ef}$ is bounded, this follows from the assumption that $\ef'$ is square integrable. }

Moreover assumption~\eqref{int_assumption} allows for constraint function $\ef$ to have zeros, but requires a certain decay towards these zeros. Therefore we introduce the following notation: for a function $\psi(t)$ we write $\psi(t)=\Theta(t^\alpha)$ as $t\to 0^+$ for some $\alpha>0$ if there exist $K_1,K_2,\epsilon>0$, such that $K_1 t^\alpha \leq \psi(t)\leq K_2t^\alpha$ for all $t \leq \epsilon$. It is easy to show that $\ef$ satisfies the assumption if the following two conditions are verified
\begin{itemize}
    \item $\ef(t) = 0$ only on finitely many subintervals $[a_i,b_i]$, $i=1,\cdots,n$, with $0\leq a_1 < b_1 < a_2 < b_2 < \cdots < a_n < b_n \leq 1$; and
    \item  The derivative $\ef'$ satisfies $\ef'(t)=\Theta((a_i- t)^{\alpha_i})$ as $t\to a_i^-$ and $\ef'(t)=\Theta((t-b_i)^{\beta_i})$ as $t\to b_i^+$ for some choices of $\alpha_i,\beta_i>0$.
\end{itemize}
\end{remark}

{Observe that Theorem \ref{prop:Affh0_Finite_Energy_Path} and Theorem \ref{thm:general_finite_energy_existence} imply cases \eqref{ass:1} and \eqref{ass:2} of Theorem \ref{thm: Affh0 Finite Energy Path_time_constant} as special cases.
Furthermore, as already pointed out in Remark \ref{rem:continuity_time_constant}, we emphasize that the continuity condition for $H$ in Theorem~\ref{prop:Affh0_Finite_Energy_Path} is not actually necessary, but we only consider continuous functions here as this is the condition for $\eich$ in Theorem~\ref{thm: AffWFRDuality_time}.}

\begin{proof}[Proof of Theorem~\ref{thm:general_finite_energy_existence}]
{We start by assuming that the function $\eich=1$ and that $\ef$ is everywhere positive. We then choose some} finite energy path $(\rho,\omega, 0)$ between $\frac{1}{\rho_0(\Omega)}\rho_0$ and $\frac{1}{\rho_1(\Omega)}\rho_1$, which exists since $\Omega$ is a convex compact set and this is balanced, unconstrained transport \cite[Theorem 5.28]{santam}. Since it has finite energy, $\rho$ and $\omega$ are disintegrable in time, $\rho=dt \otimes \rho_t, \omega=dt \otimes \omega_t$. Now define a new path between $\rho_0$ and $\rho_1$ by $\mu=(\bar{\rho},\bar{\omega},\bar{\zeta})=(\ef(t)\rho, \ef(t)\omega, \ef'(t)\rho)$. We show that $\mu$ satisfies the continuity equation as follows: for any $\phi\in C^{1}([0,1]\times \Omega)$,
     \begin{align}
         \int_{0}^{1}\int_{\Omega}\partial_t\phi(t,x) \ef(t)d\rho_t(x) dt &= \int_{0}^{1}\int_{\Omega}(\partial_t(\phi(t,x)\ef(t))-\phi(t,x)\ef'(t))d\rho_t(x) dt \\ 
         &= C-\int_{0}^{1}\int_{\Omega}\nabla\phi(t,x)\ef(t)\cdot d\omega_t(x) dt -\int_{0}^{1}\int_{\Omega}\phi(t,x) \ef'(t)d\rho_t(x) dt \label{eqn:nonzero_case_1}\\ 
         &= C-\int_{0}^{1}\int_{\Omega}\nabla \phi(t,x)\cdot d\bar{\omega}_t(x) dt -\int_{0}^{1}\int_{\Omega}\phi(t,x)d\zeta_t(x) dt ,\label{eqn:nonzero_case_2}
     \end{align}
     where $C=\int_{\Omega}\phi(1,\cdot)d\rho_1-\int_{\Omega}\phi(0,\cdot)d\rho_0$. Equation \eqref{eqn:nonzero_case_1} follows from the fact that $(\rho,\omega, 0)$ satisfies the continuity equation and by substituting $\phi\mapsto \phi(t,x)\ef(t)$. While this function is not of regularity $C^1$ in general, we can approximate it by $C^1$ functions by a density argument. By definition, $\bar{\omega}, \bar{\zeta} \ll \bar{\rho}$ so taking $\lambda=\bar{\rho}$ allows us to calculate the energy. We have
     \begin{align}
         \int_{[0,1]\times \Omega} f_{\delta}\left(\frac{d\mu}{d\lambda}\right)d \lambda & =  \int_{[0,1]\times \Omega}f_\delta\left(\frac{d\bar{\rho}} {d\lambda}, \frac{d\bar{\omega}} {d\lambda}, 0\right)d\lambda + \delta^2\int_{[0,1]\times \Omega}f_\delta\left(\frac{d\bar{\rho}} {d\lambda}, 0, \frac{d\bar{\zeta}} {d\lambda}\right)d\lambda \\ &=  \int_{[0,1]\times \Omega} \ef(t)f_\delta\left(\frac{d\rho} {d\lambda}, \frac{d\omega} {d\lambda}, 0\right)d\lambda + \frac{\delta^2}{2}\int_{0}^{1} \frac{\ef'(t)^2}{\ef(t)}dt \\ &\leq M\int_{[0,1]\times \Omega} f_\delta\left(\frac{d\rho} {d\lambda}, \frac{d\omega} {d\lambda}, 0\right)d\lambda + \frac{\delta^2}{2m}\|\ef'(t)\|_{L^2}^2
     \end{align}
     where $M=\textrm{max}_{t\in[0,1]}\Ft$ and $m=\textrm{min}_{t\in[0,1]}\Ft>0$ which exists because $\Ft$ is continuous on a compact domain $[0,1]$. Here, the continuity of $\ef$ follows from the assumption that $F\in H^1([0,1])$ and the Sobolev embedding theorem.
     The last expression is finite because the first term is the energy of $(\rho,\omega,0)$, and the second term is finite by the $L^2$ assumption on the derivative{, which concludes the proof assuming that $\eich=1$ and that $\ef>0$.}

    Next, we will show the existence of finite energy parts still assuming that $\eich=1$, but assuming that $\ef$ has at least one zero. w.l.o.g. we then assume that $\ef(.5)=0$. Note that we do not assume that this is the only zero of $\ef$. In this case{,} we will simply scale both $\rho_0$ and $\rho_1$ onto the zero measure and concatenate these paths. We only present the construction for $\rho_1${;} the construction for $\rho_0$ is equivalent. For $t\in[\frac12,1]$ we then
     define the path \(\mu=(\rho, \omega, \zeta) = (dt \otimes \rho_t, dt \otimes \omega_t, dt \otimes \zeta_t)\) by 
    \[
        \rho_t = \frac{\Ft}{\ef(1)}\rho_1, \qquad \omega_t = 0, \quad \mbox{and} \quad \zeta_t = \frac{\ef'(t)}{\ef(1)}\rho_1.
    \]
    It is easy to check that this path satisfies the continuity equation and that $\zeta \ll \rho$. Now, take {$\lambda=dt\otimes \rho_1$, so that
    \[
        \frac{d\rho}{d\lambda} = \frac{F(t)}{F(1)}, \qquad \frac{d\omega}{d\lambda} = 0, \quad \mbox{and} \quad \frac{d\zeta}{d\lambda} = \frac{\ef'(t)}{\ef(1)}.
    \]
    Therefore,
    \begin{equation}
        \int_{[\frac12,1]\times \Omega}f_{\delta}\left(\frac{d\mu}{d\lambda}\right)d\lambda = \int_{[\frac12,1]\times \Omega} \frac{\delta^2 \ef'(t)^2}{2F(1)\ef(t)}d\lambda 
        = \int_{\frac12}^{1}\int_{\Omega} \frac{\delta^2 \ef'(t)^2}{2F(1)\ef(t)}d\rho_1 dt 
        = \frac{\delta^2}{2}\int_{\frac12}^{1}\frac{\ef'(t)^2}{\ef(t)}dt,
    \end{equation}
    where the last equality follows from $\ef(1)=\rho_1(\Omega)$.} {The finiteness of the last integral is precisely (the restriction to $[\frac12,1]$ of)  assumption~\eqref{int_assumption}.} 
    
 % We now use the asymptotic assumption. For $t<\epsilon$, we have $\ef'(t)^2\leq K^2_2t^{2\alpha}$, and $\Ft=\int_{0}^{t}\ef'(s)ds \geq \int_{0}^{t}K_1 t^{\alpha}ds = \frac{K_1}{\alpha+1} t^{\alpha+1}$. Thus we have 
 %    \begin{equation}
 %        \int_{0}^{1} \frac{\ef'(t)^2}{\ef(t)}dt \leq \int_{0}^{\epsilon} \frac{(\alpha+1)K_2^2t^{2\alpha}}{K_1t^{\alpha+1}}dt + \int_{\epsilon}^{1}\frac{\ef'(t)^2}{\ef(t)}dt =\frac{(\alpha+1)K_2^2}{\alpha K_1}\epsilon^{\alpha}+\int_{\epsilon}^{1}\frac{\ef'(t)^2}{\ef(t)}dt
 %    \end{equation}
 %    Finally, we note that $1/\ef(t)$ is a continuous function on $[\epsilon,1]$ as $\ef(t)\neq 0$ on the interval, so it is bounded. Using Hölder's inequality, we observe that the second integral is finite: 
 %    \[
 %    \int_{\epsilon}^{1}\frac{\ef'(t)^2}{\ef(t)}dt \leq \|\ef'(t)^2\|_{L^1} \left\|\frac{1}{\ef(t)}\right\|_{L^{\infty}}=\|\ef'(t)\|_{L^2}^2\left\|\frac{1}{\ef(t)}\right\|_{L^{\infty}}<\infty.
 %    \]}
 %    {It is easy to see that the above argument directly generalizes to $\ef$ having multiple zeros: namely, we construct a finite energy path by concatenating paths constructed in the previous step. On the intervals where $\ef$ vanishes we can simply consider constant paths.  Since we are concatenating finitely many finite energy paths --here we need the assumption, that $\ef$ only vanishes on finitely many subintervals --  the resulting path also has finite energy and we have obtained the desired finite energy path.}
    
    {Thus we have concluded the proof assuming that $\eich=1$. To obtain the result for general $\eich$ we now consider} a finite energy path $(\rho, \omega, \zeta)$ in $\mathcal{CE}_{1,\ef}(\rho_0,\rho_1)$ that connects the measure $\eich(0,\cdot) \rho_0$ to the measure $\eich(1,\cdot) \rho_1$---{this exists by the above arguments}. Then we scale the path by $g=1/\eich$; that is, we consider the path $\tilde{\mu} = (\tilde{\rho},\tilde{\omega},\tilde{\zeta}) = (g\rho, g\omega, g\zeta + (\partial_t g) \rho + (\nabla_x g)\cdot \omega)$ where the source term is modified so that it satisfies the continuity equation. We first note that $g$ is bounded. Indeed, since $\eich$ is continuous and positive-valued, compactness of $[0,1]\times \Omega$ implies that there exists $c > 0$ such that $\Htx \geq c$ for all $(t,x) \in [0,1]\times \Omega$, and we have $|g|= g = 1/\eich\leq 1/c$. Furthermore, the derivatives of $g$ are bounded. Indeed, $\eich$ is assumed to have bounded derivatives, so there exists a constant $M$ such that $|\partial_t g| = |\partial_t 1/\eich| = |\partial_t \eich/\eich^2| \leq M/c^2$ and $\nabla_x \eich$ is also bounded by the same argument. \\
    % \begin{itemize}[leftmargin=*]
    %     \item since $\eich$ is assumed to be continuous and positive-valued, compactness of $[0,1]\times \Omega$ implies that there exists $c > 0$ such that $\Htx \geq c$ for all $(t,x) \in [0,1]\times \Omega$, and we have $|g|= g = 1/\eich\leq 1/c$,
    %     \item since $\eich$ is assumed to have bounded derivatives, there exists a constant $M$ such that $|\partial_t g| = |\partial_t 1/\eich| = |\partial_t \eich/\eich^2| \leq M/c^2 $, and
    %     \item $\nabla_x \eich$ is bounded by the same argument as the time derivative.
    % \end{itemize}
The measure $\tilde{\rho}$ satisfies the desired affine constraint because $d\tilde{\rho}_t(x) = g(t,x)d\rho_t(x)$, so 
\begin{equation}
\int_{\Omega}\Htx d\tilde{\rho}_t(x) =\int_{\Omega}\Htx g(t,x)d\rho_t(x) 
=\int_{\Omega}d\rho_t(x)=\rho_t(\Omega)=\Ft.
\end{equation}
We now show that the path $\tilde{\mu}$ satisfies the continuity equation. For any $\phi\in C^1([0,1]\times \Omega)$, by the Product Rule,
\begin{align}
\int_{[0,1]\times \Omega}(\partial_t \phi )d\tilde{\rho} &= \int_{[0,1]\times \Omega}(\partial_t \phi )gd\rho =\int_{[0,1]\times \Omega}(\partial_t(\phi g)-\phi\partial_t g)d\rho \\
\int_{[0,1]\times \Omega}(\nabla_x \phi )\cdot d\tilde{\omega} &= \int_{[0,1]\times \Omega}(\nabla_x \phi )\cdot gd\omega =\int_{[0,1]\times \Omega}(\nabla_x(\phi g)-\phi\nabla_x g)\cdot d\omega
\end{align}
Moreover, by definition,
\begin{align}
    \int_{[0,1]\times \Omega}\phi d\tilde{\zeta}-\int_{[0,1]\times \Omega}\phi\partial_t g d\rho -\int_{[0,1]\times \Omega}\phi(\nabla_x g)\cdot d\omega = \int_{[0,1]\times \Omega}\phi gd\zeta
\end{align}
Therefore,
\begin{align}
    &\int_{[0,1]\times \Omega}\partial_t \phi d\tilde{\rho} + \int_{[0,1]\times \Omega}(\nabla_x \phi )\cdot d\tilde{\omega}+ \int_{[0,1]\times \Omega}\phi d\tilde{\zeta} \\ 
    &= \int_{[0,1]\times \Omega}\partial_t(\phi g )d\rho + \int_{[0,1]\times \Omega}\nabla_x (\phi g)\cdot d\omega +\left(\int_{[0,1]\times \Omega}\phi d\tilde{\zeta}-\int_{[0,1]\times \Omega}\phi\partial_t g d\rho -\int_{[0,1]\times \Omega}\phi(\nabla_x g)\cdot d\omega\right) \\ 
    &= \int_{[0,1]\times \Omega}\partial_t(\phi g )d\rho + \int_{[0,1]\times \Omega}\nabla_x (\phi g)\cdot d\omega + \int_{[0,1]\times \Omega}\phi g d\zeta \label{eqn: continuity_phig_LHS}\\ 
    &= \int_{\Omega}\phi(1,\cdot)g(1,\cdot)d\rho_1 - \int_{\Omega}\phi(1,\cdot)g(0,\cdot)d\rho_0 \label{eqn: continuity_phig_RHS} = \int_{\Omega}\phi(1,\cdot)d\tilde{\rho}_1 -\int_{\Omega}\phi(0,\cdot)d\tilde{\rho}_0,
\end{align}
where we have used the assumption that $(\rho,\omega, \zeta)$ satisfies the continuity equation. Although $\phi g$ is not $C^1$ in general, we can approximate the function by a $C^1$ function by a density argument.

Next, we show that the path $\tilde{\mu}$ has a finite energy. Take any Borel nonnegative measure $\lambda$ such that  $\rho, \omega, \zeta \ll \lambda$. Then we obtain that
\begin{align}
    \frac{d \tilde{\rho}}{d\lambda} = g\frac{d\rho}{d\lambda}, \ \
    \frac{d \tilde{\omega}}{d\lambda} = g\frac{d\omega}{d\lambda} \ \ \text{and }
    \frac{d \tilde{\zeta}}{d\lambda} = g\frac{d\zeta}{d\lambda} + \partial_t g \frac{d\rho}{d\lambda} + (\nabla_x g)\cdot \frac{d\omega}{d\lambda}
\end{align}
For brevity, let us denote $\tilde{\rho}_\lambda = \frac{d \tilde{\rho}}{d\lambda}, \tilde{\omega}_\lambda = \frac{d \tilde{\omega}}{d\lambda}, \tilde{\zeta}_\lambda = \frac{d \tilde{\zeta}}{d\lambda}$ and similarly for Radon-Nykodim derivatives of $(\rho,\omega,\zeta)$. Observe that the \emph{Wasserstein term} in the WFR objective is bounded as
\begin{equation}\label{eqn:wasserstein_energy}
    \int_{[0,1]\times \Omega} \frac{\|\tilde{\omega}_\lambda \|^2}{\tilde{\rho}_\lambda }d\lambda  = \int_{[0,1]\times \Omega} g\frac{\|\omega_\lambda \|^2}{\rho_\lambda }d\lambda \leq \frac{1}{c} \int_{[0,1]\times \Omega} \frac{\|\omega_\lambda \|^2}{\rho_\lambda }d\lambda,
\end{equation}
where $c>0$ is the lower bound of $h$ introduced earlier. Since the quantity on the right is a constant multiple of the Wasserstein term in the WFR objective evaluated on $(\rho, \omega, \zeta)$, the right hand side of \eqref{eqn:wasserstein_energy} is finite. Now, for the \emph{Fisher-Rao term} in the energy, we have 
\begin{align}&\frac{\tilde{\zeta}_\lambda^2}{\tilde{\rho}_\lambda} =  \frac{g^2\zeta_\lambda^2 + (\partial_t g)^2 \rho_\lambda^2 + (\nabla_x g\cdot \omega_\lambda)^2 +2g\partial_t g \zeta_\lambda \rho_\lambda +2 \partial_t g \rho_\lambda \nabla_x g\cdot \omega_\lambda +2g\zeta_\lambda (\nabla_x g \cdot \omega_\lambda)}{g\rho_\lambda}\end{align}
We now investigate the integral with respect to $\lambda$ of each term of $\frac{\tilde{\zeta}_\lambda^2}{\tilde{\rho}_\lambda}$.
\begin{itemize}[leftmargin=*]
    \item $\frac{g^2 \zeta_\lambda^2}{g\rho_\lambda } = g\frac{\zeta_\lambda^2}{\rho_\lambda }$ has a finite integral because $g$ is bounded and $(\rho,\omega,\zeta)$ has finite energy.
    \item $\frac{(\partial_t g)^2 \rho_\lambda^2}{\rho_\lambda} = \frac{(\partial_t g)^2}{g}\rho_\lambda$ has a finite integral as we integrate a bounded function $\frac{(\partial_t {g})^2}{g}$ by a finite measure $\rho$. 
    \item By Cauchy-Schwartz, we have $\frac{(\nabla_x g\cdot \omega_\lambda)^2}{g\rho_\lambda}\leq \frac{\|\nabla_x g \|^2\|\omega_\lambda\|^2}{g\rho_\lambda} = \frac{g^4\|\nabla_x \eich \|^2\|\omega_\lambda\|^2}{g\rho_\lambda} = g^3\frac{\|\nabla_x \eich \|^2\|\omega_\lambda\|^2}{\rho_\lambda}$ which has a finite integral because of the boundedness and the finite energy assumption. Here, we used $\|\nabla_x g\|= \frac{\|\nabla_x \eich\|}{\eich^2}=g^2\|\nabla_x \eich\|$.

    \item $\frac{2g (\partial_t g) \zeta_\lambda \rho_\lambda}{g\rho_\lambda} = 2(\partial_t g) \zeta_\lambda$ has a finite integral because it is an integral of a bounded function $2\partial_t g$ by a finite measure $\zeta$.

    \item $\frac{2(\partial_t g)\rho_\lambda \nabla_x g\cdot \omega_\lambda}{g\rho_\lambda} = \frac{2(\partial_t g) \nabla_x g\cdot \omega_\lambda}{g} = \left(\frac{2(\partial_t g)\nabla_x g}{g}\right)\cdot \omega_\lambda$ has a finite integral because it is an integral of a bounded function $\frac{2\partial_t g\nabla_x g}{g}$ by a bounded vector measure $\omega${, i.e., $\omega$ is a vector measure with $\|\omega\|(\Omega)<\infty$ \cite[p. 5]{diestel1977vector} where $\|\omega\|$ is the semivariation of $\omega$. This condition is true since it is equivalent to the fact that the range of $\omega$ is bounded \cite[Proposition 11-(b)]{diestel1977vector} and each component of $\omega$ is a Radon measure on a compact set $\Omega$. By the integral inequality $\|\int f d\omega \| \leq \|f\|_{\infty}\|\omega\|(\Omega)$ in \cite[p. 6]{diestel1977vector}, we obtain the finiteness.}

    \item {For the last term, if $\Ft=0$ at some point we can take a finite energy path $(\rho,\omega,\zeta)$ with $\omega=0$ so the last term is zero, and it has a finite integral. If $\Ft>0$ with $\ef'(t)\in L^2$, we can take the path $(\rho,\omega,\zeta)= (\ef(t)\rho^B, \ef(t)\omega^B, \ef'(t)\rho^B)$ where $(\rho^B,\omega^B)$ is a finite energy path in a balanced case as discussed before. }Therefore, 
    \begin{equation}
        \frac{2g\zeta_\lambda (\nabla_x g \cdot \omega_\lambda)}{g\rho_\lambda} = \frac{2g\ef' \rho_\lambda^B (\ef\nabla_x g\cdot \omega_\lambda^B)}{g\ef\rho_\lambda^B} = 2\ef'\nabla_x g \cdot \omega_\lambda^B = 2\ef'\sqrt{\rho_\lambda^B}\cdot \frac{\nabla_x g\cdot \omega_\lambda^B}{\sqrt{\rho_\lambda^B}}
    \end{equation}
    Now, by Cauchy-Schwartz,
    \begin{equation}
        \left\|\ef'(t)\sqrt{\rho_\lambda^B}\cdot \frac{\nabla_x g\cdot \omega_\lambda^B}{\sqrt{\rho_\lambda^B}}\right\|_{L^1(\lambda)} \leq \left\|f'(t)\sqrt{\rho_\lambda^B}\right\|_{L^2(\lambda)}\left\|\frac{\nabla_x g\cdot \omega_\lambda^B}{\sqrt{\rho_\lambda^B}}\right\|_{L^2(\lambda)}
    \end{equation}
    and each term on the right-hand side is finite because
    \begin{align}
        \left\|\ef'\sqrt{\rho_\lambda^B}\right\|^{2}_{L^2(\lambda)} &= \int_{\Omega\times [0,1]}\ef'^2 \rho_\lambda^B d\lambda =\int_{\Omega\times [0,1]}\ef'^2 d\rho^B =\int_{0}^{1}\int_{\Omega}\ef'^2d\rho_t^B dt \\ &= \int_{0}^{1}\ef'^2\rho_t^B(\Omega)dt  = \int_{0}^{1}\ef'^2\cdot 1dt = \|\ef'\|_{L^2}^2<\infty
    \end{align}
    Note that $\rho_t^B(\Omega)=1$ for all $t$ since $\rho^B$ is a path for balanced transport. The other term is finite because
    \begin{align}
    \left\|\frac{\nabla_x g\cdot\omega_\lambda^B}{\sqrt{\rho_\lambda^B}}\right\|^2_{L^2(\lambda)} &= \int_{[0,1]\times \Omega}  \frac{(\nabla_x g \cdot \omega_\lambda^B)^2}{\rho_\lambda^B}d\lambda 
    \end{align}
    which is finite by using Cauchy-Schwartz again and applying the boundedness and the finite energy assumption. Overall, the path has a finite energy. \qedhere
\end{itemize}
\end{proof}
  {
\begin{remark}[A bound for the WFR distance with $\eich=1$ and $\ef>0$.]
  In the case that $\eich=1$ and $\ef>0$ the proof of  Theorem~\ref{thm:general_finite_energy_existence} leads to the upper bound
        \begin{equation}
        \normalfont{\WFRGM}\rhosq \leq M\cdot \mathrm{W}_{2}\left(\frac{\rho_0}{\rho_0(\Omega)},\frac{\rho_1}{\rho_1(\Omega)}\right)^2 + \frac{\delta^2}{2m}\|\ef'(t)\|^2_{L^2},
        \end{equation}
        where $M=\max_{t\in[0,1]}\ef(t), m=\min_{t\in[0,1]}\ef(t)$ and $\mathrm{W}_2$ is the Wasserstein 2-distance (i.e., the square root of \eqref{eqn:kantorovich_ot}).
\end{remark}}

{The results of Theorem~\ref{thm:general_finite_energy_existence} presented several sufficient conditions that guarantee the existence of finite energy paths. In this theorem we assumed that the function $\ef$ is in $H^1([0,1])$ and thus in particular absolutely continuous. We will conclude this section by showing that in the case of $\eich$ being constant absolute continuity for $\ef$ is indeed necessary:}
\begin{remark}
    \label{lem: abs_cont_f}
    Suppose that $\mu=(\rho,\omega ,\zeta)\in \mathcal{CE}_{1,\ef}(\rho_0,\rho_1)$ is a finite energy path. We claim that there exists an absolutely continuous function $u:[0,1]\to \mathbb{R}$ such that $u(t)=\ef(t)$ for almost every $t \in [0,1]$. Indeed, since $\mu$ is a finite energy path, Proposition \ref{prop: disintegration in time} ensures that $\rho$, $\omega$ and $\zeta$ are disintegrable in time, i.e., $\rho=dt \otimes \rho_t,\omega=dt \otimes \omega_t, \zeta = dt \otimes \zeta_t$.  Consider a $C^1$ function which is independent of the space variables, $\phi=\phi(t)\in C^1([0,1])$. Evaluating the continuity equation \eqref{eqn:dist_continuity_equation} with such a $\phi$ gives
    \begin{equation}\label{eq: distributional_derivative}
        \int_{0}^{1} \phi'(t) \rho_t(\Omega)dt + \int_{0}^{1}\phi(t) \zeta_t(\Omega)dt = \phi(1)\rho_1(\Omega) - \phi(0)\rho_0(\Omega)
    \end{equation}
    Define $u:[0,1]\to \mathbb{R}$ by $u(t) = \int_{0}^{t}\zeta_s(\Omega)ds+\rho_0(\Omega)$. Then $u$ is absolutely continuous, and $u'(t)=\zeta_t(\Omega)$ almost everywhere. Applying \eqref{eq: distributional_derivative} to the constant function $\psi(t) = 1$ implies that $\int_{0}^{1}\zeta_t(\Omega)dt = \rho_1(\Omega)-\rho_0(\Omega)$, so $u(1)=\rho_1(\Omega), u(0)=\rho_0(\Omega)$. Integration by parts yields
    \begin{align}
        \int_{0}^{1}\phi(t) \zeta_t(\Omega)dt &= \int_{0}^{1}\phi(t) u'(t)dt = \phi(1)u(1)-\phi(0)u(0)-\int_{0}^{1}\phi'(t)u(t)dt
    \end{align}
    Substituting this into \eqref{eq: distributional_derivative} gives us 
    \begin{align}
        \int_{0}^{1}\phi'(t)\rho_t(\Omega)dt = \int_{0}^{1}\phi'(t) u(t)dt.
    \end{align}
    Since this equation is true for any $\phi(t)\in C^1([0,1])$, we obtain $\rho_t(\Omega)=\ef(t)=u(t)$ almost everywhere.
\end{remark}

    % \begin{proof}
    %     Take $(\rho,\omega)$ in the proof above as the optimal transport path.
    % \end{proof}

\subsection{Optimality certification}
In the following, we examine sufficient conditions for $\mu$ to be an optimal point of the constrained unbalanced optimal transport problem.
\subsubsection{{Optimality certification for Time-dependent constraints}}
In the spirit of Theorem 2.3 of \cite{chizat2018interpolating} in the unconstrained case, we first recall an important result on which we will rely to prove an optimality certification theorem:
\begin{lemma}[Lemma 2.1, \cite{chizat2018interpolating}]
    Denote the objective integral in (\ref{eqn:WFR_distance}) as $D_\delta(\mu)$. The subdifferential of $D_\delta$ at $\mu=(\rho, \omega, \mu)$ such that $D_\delta(\mu) < +\infty$ contains the set
    \begin{equation}
        \partial^c D_\delta (\mu) = \left\{(\alpha,\beta,\gamma)\in C([0,1]\times \Omega;B_{\delta})\middle| \alpha+\frac{1}{2}\left(|\beta|^2+\frac{\gamma^2}{\delta^2}\right)=0-\rho \textrm{ a.e.},\beta\rho=\omega \textrm{ and }\gamma\rho=\delta^2\zeta\right\}
    \end{equation}
\end{lemma}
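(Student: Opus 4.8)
The plan is to verify directly that any $(\alpha,\beta,\gamma)$ in the candidate set $\partial^c D_\delta(\mu)$ satisfies the defining subgradient inequality
\[
D_\delta(\mu') \geq D_\delta(\mu) + \langle(\alpha,\beta,\gamma),\, \mu'-\mu\rangle
\]
for every $\mu' = (\rho',\omega',\zeta')$, where $\langle(\alpha,\beta,\gamma),(\rho',\omega',\zeta')\rangle := \int\alpha\,d\rho' + \int\beta\cdot d\omega' + \int\gamma\,d\zeta'$ is the canonical pairing of continuous functions with measures. Since the claim is only the containment $\partial^c D_\delta(\mu)\subseteq \partial D_\delta(\mu)$, rather than a full characterization of the subdifferential, I can avoid the measurable-selection and interchange-of-subdifferentiation-and-integration machinery of Rockafellar and argue purely through the pointwise Fenchel--Young inequality. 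The essential ingredient, already recorded in the excerpt, is that $f_\delta$ is the Fenchel conjugate of $\iota_{B_\delta}$; equivalently $(f_\delta,\iota_{B_\delta})$ form a dual pair.

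For the ``easy half'' I would first fix a common nonnegative reference measure $\lambda$ dominating all six measures $\rho,\omega,\zeta,\rho',\omega',\zeta'$, and write $r=d\rho/d\lambda$, $m=d\omega/d\lambda$, $z=d\zeta/d\lambda$, with $r',m',z'$ the analogues for $\mu'$; by $1$-homogeneity of $f_\delta$ the integrals below do not depend on the choice of $\lambda$. Because $(\alpha,\beta,\gamma)$ takes values in $B_\delta$ we have $\iota_{B_\delta}(\alpha,\beta,\gamma)=0$ pointwise, so Fenchel--Young applied to the dual pair gives, $\lambda$-a.e.,
\[
f_\delta(r',m',z') \;\geq\; \alpha\, r' + \beta\cdot m' + \gamma\, z'.
\]
Integrating against $\lambda$ yields $D_\delta(\mu')\geq \langle(\alpha,\beta,\gamma),\mu'\rangle$ for all $\mu'$, with no further hypotheses.

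The substantive step is the equality case at $\mu$ itself, namely $D_\delta(\mu)=\langle(\alpha,\beta,\gamma),\mu\rangle$; subtracting this from the previous display produces precisely the subgradient inequality. Here the complementary-slackness conditions of the candidate set are used. On $\{r>0\}$ the measure identities $\beta\rho=\omega$ and $\gamma\rho=\delta^2\zeta$ read $\beta r=m$ and $\gamma r=\delta^2 z$ $\lambda$-a.e., and substituting these together with the boundary identity $\alpha+\tfrac12(\|\beta\|^2+\gamma^2/\delta^2)=0$ (which holds $\rho$-a.e., i.e.\ $\lambda$-a.e.\ on $\{r>0\}$) shows that both $f_\delta(r,m,z)$ and $\alpha r+\beta\cdot m+\gamma z$ equal $\tfrac{r}{2}\bigl(\|\beta\|^2+\gamma^2/\delta^2\bigr)$, hence coincide. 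On the complementary set $\{r=0\}$, finiteness of $D_\delta(\mu)$ forces $m=0$ and $z=0$ $\lambda$-a.e.\ (otherwise $f_\delta=+\infty$), so both sides vanish. Integrating this pointwise equality recovers $D_\delta(\mu)=\langle(\alpha,\beta,\gamma),\mu\rangle$, and the two halves combine to give $(\alpha,\beta,\gamma)\in\partial D_\delta(\mu)$.

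I expect the main obstacle to be careful measure-theoretic bookkeeping rather than any deep estimate. The key points to dispatch are: that the pairing and all integrals are well defined, which follows since $(\alpha,\beta,\gamma)$ is continuous on the compact set $[0,1]\times\Omega$ and hence bounded, so it is integrable against the finite measures; that the identities $\beta\rho=\omega$ and $\gamma\rho=\delta^2\zeta$ are consistent with $\omega,\zeta\ll\rho$, which is guaranteed by $D_\delta(\mu)<\infty$; and that the null set $\{r=0\}$ is handled correctly, which is exactly why the boundary condition is imposed only $\rho$-a.e.\ rather than $\lambda$-a.e. Once these are addressed, the argument closes immediately.
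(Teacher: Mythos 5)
Your proposal is correct. Note that the paper itself gives no proof of this lemma---it is imported verbatim (as Lemma 2.1) from the cited reference of Chizat et al.---so the only comparison available is with that source, and your argument is essentially the same standard one used there: the Fenchel--Young inequality for the dual pair $(f_\delta,\iota_{B_\delta})$ gives $D_\delta(\mu')\geq\langle(\alpha,\beta,\gamma),\mu'\rangle$ for every $\mu'$, and the complementary-slackness conditions defining $\partial^c D_\delta(\mu)$ (the boundary identity $\rho$-a.e.\ on $\{r>0\}$, the measure identities $\beta\rho=\omega$, $\gamma\rho=\delta^2\zeta$, and the vanishing of $m,z$ on $\{r=0\}$ forced by finiteness) yield equality at $\mu$, whence the subgradient inequality follows by subtraction. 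Your handling of the measure-theoretic details (boundedness of the continuous certificate, independence of $\lambda$ by $1$-homogeneity, translation of the measure identities into Radon--Nikodym identities) is accurate and closes the argument.
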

To prove the main result of this section{,} we will need one more technical lemma. In the following we slightly abuse the notation $\AHF$ and use it for the convex set of all  $\mu = (\rho,\omega,\zeta) \in \Mplus([0,1]\times \Omega)\times \Meas([0,1]\times \Omega)^n \times \Meas([0,1]\times \Omega)$ such that $\rho \in \AHF$. 
\begin{lemma}
    Let $\eich = (H_1, \cdots H_d) \in C([0,1]\times \Omega)^d$ and $F:[0,1]\times \Omega \to \mathbb{R}^d$. The normal cone {$N(\mu)$} of {$\CEHF$}, i.e., the subdifferential of {$\iota_{\CEHF}$} at $\mu=(\rho,\omega,\zeta) \in {\CEHF,}$ {contains the set} 
    \begin{equation}
    {N_\rho(\mu)}=\left\{\left(\sum_{i=1}^{d}H_i(t,x)g_i(t),0,0\right)\in C([0,1]\times \Omega; \mathbb{R}\times\mathbb{R}^n\times \mathbb{R})|g_i\in C([0,1]),i=1,\cdots ,d\right\}
    \end{equation} 
\end{lemma}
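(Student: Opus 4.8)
The plan is to treat $\AHF$ as an affine subset of $Y^\ast = \Meas([0,1]\times\Omega)\times\Meas([0,1]\times\Omega)^n\times\Meas([0,1]\times\Omega)$ and to identify its normal cone with the annihilator of the underlying direction space. (The positivity constraint $\rho\in\Mplus$ plays no role here: in the optimality certification it is already enforced by the energy $D_\delta$, so $\iota_{\AHF}$ may be taken as the indicator of the affine constraint alone, which is what justifies calling $\AHF$ affine.) Fixing $\mu=(\rho,\omega,\zeta)\in\AHF$ and writing $V=\AHF-\mu$ for the direction space, I would first check that $V$ consists exactly of triples $(\sigma,\eta,\xi)$ with $\sigma=dt\otimes\sigma_t$ satisfying $\int_\Omega H_i(t,\cdot)\,d\sigma_t=0$ for a.e.\ $t$ and all $i$, while $\eta\in\Meas([0,1]\times\Omega)^n$ and $\xi\in\Meas([0,1]\times\Omega)$ are arbitrary. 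Since the normal cone of an affine set at any of its points is the annihilator $V^\perp\subset Y$, it remains to compute the set of $(\alpha,\beta,\gamma)\in Y$ with $\langle(\alpha,\beta,\gamma),(\sigma,\eta,\xi)\rangle=0$ for all $(\sigma,\eta,\xi)\in V$.

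Next I would dispose of $\beta$ and $\gamma$. Because $\eta$ and $\xi$ range freely, the requirements $\int\beta\cdot d\eta=0$ and $\int\gamma\,d\xi=0$ for all such $\eta,\xi$ force $\beta\equiv 0$ and $\gamma\equiv 0$ (test against signed Dirac masses). Thus $V^\perp$ reduces to the triples $(\alpha,0,0)$ with $\alpha\in C([0,1]\times\Omega)$ satisfying $\int_{[0,1]\times\Omega}\alpha\,d\sigma=0$ for every disintegrable $\sigma=dt\otimes\sigma_t$ whose fibers annihilate all $H_i(t,\cdot)$. The inclusion $\supseteq$ of the lemma is then immediate: if $\alpha(t,x)=\sum_i H_i(t,x)g_i(t)$ with $g_i\in C([0,1])$, then $\int\alpha\,d\sigma=\int_0^1\sum_i g_i(t)\big(\int_\Omega H_i(t,\cdot)\,d\sigma_t\big)\,dt=0$, so $(\alpha,0,0)\in V^\perp$.

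The substance of the proof is the reverse inclusion, which I would establish fiberwise. Fix $t_0\in[0,1]$ and a signed measure $\nu_0$ on $\Omega$ with $\int_\Omega H_i(t_0,\cdot)\,d\nu_0=0$ for all $i$; the goal is to deduce $\int_\Omega\alpha(t_0,\cdot)\,d\nu_0=0$. The difficulty, which I expect to be the main obstacle, is that since $H$ depends on $t$ the fixed measure $\nu_0$ need not satisfy the constraint at nearby times, so one cannot simply insert $\sigma=dt\otimes(\psi(t)\nu_0)$ into the hypothesis. I would remedy this by correcting $\nu_0$ to a continuous family: assuming (as is natural, and as is used elsewhere in the paper via the Gram matrix) that $H_1(t,\cdot),\dots,H_d(t,\cdot)$ are linearly independent, choose points $x_1,\dots,x_d\in\Omega$ so that the matrix $M(t)=(H_i(t,x_j))_{i,j}$ is invertible at $t_0$, hence on a neighborhood of $t_0$. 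Setting $c(t)=-M(t)^{-1}\big(\int_\Omega H_i(t,\cdot)\,d\nu_0\big)_i$ and $\nu_t=\nu_0+\sum_j c_j(t)\delta_{x_j}$ gives a continuous family with $\int_\Omega H_i(t,\cdot)\,d\nu_t=0$ near $t_0$ and $c(t_0)=0$. For a nonnegative approximate identity $\psi$ concentrating at $t_0$, the measure $\sigma=dt\otimes(\psi(t)\nu_t)$ lies in $V$, so $\int\alpha\,d\sigma=0$; letting the support of $\psi$ shrink and using continuity of $\alpha$ together with $c(t_0)=0$ yields $\int_\Omega\alpha(t_0,\cdot)\,d\nu_0=0$. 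A standard linear-algebra fact—a linear functional vanishing on the intersection of the kernels of finitely many linear functionals lies in their span—then gives $\alpha(t_0,\cdot)\in\mathrm{span}\{H_1(t_0,\cdot),\dots,H_d(t_0,\cdot)\}$, i.e.\ $\alpha(t_0,\cdot)=\sum_i g_i(t_0)H_i(t_0,\cdot)$.

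Finally I would upgrade this pointwise representation to a continuous one. Under the same independence assumption, the local formula $g(t)=M(t)^{-1}(\alpha(t,x_j))_j$ expresses the coefficients through the continuous, locally invertible matrix $M(t)$ and the continuous data $\alpha(t,x_j)$, so each $g_i$ is continuous near every $t_0$ and hence $g_i\in C([0,1])$. This produces $\alpha=\sum_i H_i g_i$ with $g_i\in C([0,1])$ and completes the reverse inclusion. The degenerate case where the $H_i(t,\cdot)$ fail to be linearly independent would be handled by passing to a maximal linearly independent subset, at the cost of losing uniqueness (but not existence) of the coefficients $g_i$.
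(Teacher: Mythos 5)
The paper offers no proof to compare against: it dismisses the lemma with the remark that ``this is the normal cone of an affine set, which is well-studied.'' Your proposal is therefore strictly more detailed than the source, and its main line is the natural one: identify the normal cone with the annihilator of the direction space $V=\AHF-\mu$, eliminate the $\beta$ and $\gamma$ components by testing against arbitrary $\eta,\xi$, obtain the easy inclusion by disintegration, and prove the reverse inclusion fiberwise by correcting a test measure $\nu_0$ with Dirac masses placed at points where the matrix $\bigl(H_i(t,x_j)\bigr)_{i,j}$ is invertible, recovering continuity of the coefficients $g_i$ from the resulting local linear system. Your decision to ignore the positivity constraint also matches the paper's implicit convention, and is harmless for the way the lemma is used: Theorem~\ref{thm:opt_certificate} only needs the inclusion $N(\mu)\subseteq\partial\iota_{\AHF}(\mu)$, which is your easy direction and requires no nondegeneracy at all.

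The genuine gap is your final sentence. The degenerate case cannot be ``handled by passing to a maximal linearly independent subset,'' because when the functions $H_i(t,\cdot)$ fail to be linearly independent for some $t$ the stated equality is simply false, so no repair of the argument can succeed. Concretely, take $d=1$, $\eich(t,x)=t$, $\ef=0$. A direction $\sigma=dt\otimes\sigma_t$ then satisfies $t\,\sigma_t(\Omega)=0$ for a.e.\ $t$, hence $\sigma_t(\Omega)=0$ for a.e.\ $t$, and so the constant function $\alpha\equiv 1$ annihilates every such $\sigma$:
\begin{equation}
\int_{[0,1]\times\Omega}1\,d\sigma=\int_0^1\sigma_t(\Omega)\,dt=0,
\end{equation}
placing $(1,0,0)$ in the normal cone; yet $1=t\,g(t)$ admits no continuous (even bounded) solution $g$, so $(1,0,0)$ does not belong to the set $N(\mu)$ of the lemma. (Keeping the positivity constraint makes the discrepancy worse, not better: the only feasible $\rho$ is then $0$, and the normal cone contains every triple $(\alpha,0,0)$ with $\alpha\in C([0,1]\times\Omega)$.) The correct fix is to promote pointwise linear independence of $H_1(t,\cdot),\dots,H_d(t,\cdot)$, for every $t\in[0,1]$, from a working assumption to an explicit hypothesis---the same nondegeneracy the paper itself invokes when asserting invertibility of the Gram matrix $G_{\rho_t}$ in Section~\ref{sec:constrained_WFR_distance}. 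Under that hypothesis your fiberwise argument is complete: independence at $t_0$ yields points $x_1,\dots,x_d$ with the evaluation matrix invertible at $t_0$, continuity keeps it invertible on a neighborhood, and your local formula for the coefficients then gives $g_i\in C([0,1])$.
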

{
\begin{proof}
    Let $\mu = (\rho, \omega, \zeta) \in \CEHF$. The normal cone of $\CEHF$ at $\mu$ is the set of all $(\alpha, \beta, \gamma) \in C([0,1]\times \Omega; \mathbb{R}\times\mathbb{R}^n\times \mathbb{R})$ such that
    \begin{equation}
        \langle (\alpha, \beta, \gamma), \nu - \mu \rangle \leq 0 \quad \forall \nu \in \CEHF.
    \end{equation}
    Equivalently, for any $\nu = (\bar{\rho}, \bar{\omega}, \bar{\zeta}) \in \CEHF$, we should have
    \begin{equation}
        \int_{[0, 1]\times \Omega}\alpha d(\bar{\rho}-\rho) + \int_{[0, 1]\times \Omega}\beta \cdot d(\bar{\omega} - \omega) + \int_{[0, 1]\times \Omega}\gamma d(\bar{\zeta} - \zeta) \leq 0.
    \end{equation}
    By substituting $\alpha = \sum_{i=1}^{d}H_i(t,x)g_i(t)$, $\beta = 0$ and $\gamma = 0$ for some $g_i \in C([0,1])$, we have
    \begin{align}
        \int_{[0, 1]\times \Omega}\sum_{i=1}^{d}H_i(t,x)g_i(t) d(\bar{\rho}-\rho) = \sum_{i=1}^{d}\int_{0}^{1}g_i(t)\left(\int_{\Omega}H_i(t,x)d\bar{\rho}_t - \int_{\Omega}H_i(t,x)d\rho_t\right)dt = 0
    \end{align}
    Therefore, $(\sum_{i=1}^{d}H_i(t,x)g_i(t),0,0) \in N(\mu)$.
\end{proof}
}

We are now able to show our main result of this subsection:% now state our sufficient condition for optimality. {I wonder whether we need to assume that the optimal cost is also finite for the following argument to hold, namely that we are in a case where there exists a finite energy path.}
\begin{theorem}[Optimality certificate]
\label{thm:opt_certificate}
    Let $\rho_0,\rho_1\in \mathcal{M}_+(\Omega)$. For $\mu=(\rho,\omega, \zeta)\in \CEHF$, if there exists $\phi \in C^1([0,1]\times \Omega)$ such that 
    \begin{equation}
        (\partial_t\phi, \nabla\phi, \phi) \in \partial^c D_\delta(\mu)+N(\mu) 
    \end{equation}
    then $\mu$ is a %{unique(?)} 
    minimizer for the infimum in $(\ref{eqn: constWFR_affine})$. We refer to such $\phi$ as an optimality certificate for $\mu$.
\end{theorem}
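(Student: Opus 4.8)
The plan is to run the standard convex-optimality argument: a feasible $\mu$ minimizes the convex functional $D_\delta$ over $\CEHF$ as soon as $D_\delta$ admits a subgradient at $\mu$ that pairs nonnegatively with every admissible perturbation, and the certificate $\phi$ supplies exactly such a subgradient. Concretely, I would first unwind the hypothesis $(\partial_t\phi,\nabla\phi,\phi)\in\partial^c D_\delta(\mu)+N(\mu)$ as
\[
(\partial_t\phi,\nabla\phi,\phi)=(\alpha,\beta,\gamma)+\Bigl(\sum_{i=1}^d H_i g_i,\,0,\,0\Bigr),
\]
with $(\alpha,\beta,\gamma)\in\partial^c D_\delta(\mu)$ and $g_i\in C([0,1])$; equivalently $\alpha=\partial_t\phi-\sum_i H_i g_i$, $\beta=\nabla\phi$, $\gamma=\phi$. (This presupposes $D_\delta(\mu)<\infty$, since otherwise the displayed subdifferential is empty.) Because the preceding lemma (Lemma 2.1 of \cite{chizat2018interpolating}) asserts $\partial^c D_\delta(\mu)\subseteq\partial D_\delta(\mu)$, the triple $(\alpha,\beta,\gamma)$ is a genuine subgradient, so for every competitor $\tilde\mu=(\tilde\rho,\tilde\omega,\tilde\zeta)\in\CEHF$ the subgradient inequality yields
\[
D_\delta(\tilde\mu)\ge D_\delta(\mu)+\int\alpha\,d(\tilde\rho-\rho)+\int\beta\cdot d(\tilde\omega-\omega)+\int\gamma\,d(\tilde\zeta-\zeta).
\]

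The heart of the proof is then to verify that the linear term on the right vanishes for every such $\tilde\mu$, whence $D_\delta(\tilde\mu)\ge D_\delta(\mu)$. Substituting the expressions for $\alpha,\beta,\gamma$ splits this term into two pieces. The first,
\[
\int\partial_t\phi\,d(\tilde\rho-\rho)+\int\nabla\phi\cdot d(\tilde\omega-\omega)+\int\phi\,d(\tilde\zeta-\zeta),
\]
vanishes because both $\mu$ and $\tilde\mu$ satisfy the distributional continuity equation \eqref{eqn:dist_continuity_equation} with the \emph{same} boundary data $\rho_0,\rho_1$: each sends the left-hand side to the identical boundary term $\int_\Omega\phi(1,\cdot)\,d\rho_1-\int_\Omega\phi(0,\cdot)\,d\rho_0$, so their difference is zero. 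The second piece is $-\sum_{i=1}^d\int_{[0,1]\times\Omega}H_i g_i\,d(\tilde\rho-\rho)$; disintegrating in time (built into the definition of $\AHF$) rewrites it as $-\sum_i\int_0^1 g_i(t)\bigl(\int_\Omega H_i(t,\cdot)\,d\tilde\rho_t-\int_\Omega H_i(t,\cdot)\,d\rho_t\bigr)\,dt$, which is zero because $\tilde\rho,\rho\in\AHF$ force both inner integrals to equal $\ef_i(t)$ for a.e.\ $t$. Hence $D_\delta(\tilde\mu)\ge D_\delta(\mu)$ for all $\tilde\mu\in\CEHF$, which is precisely the assertion that $\mu$ attains the infimum \eqref{eqn: constWFR_affine}.

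I do not anticipate a genuine obstacle: the result is the familiar statement that feasibility together with $0\in\partial D_\delta(\mu)+N_{\CEHF}(\mu)$ certifies optimality, the certificate $\phi$ playing the role of the Lagrange multiplier for the continuity constraint and the functions $g_i$ the multipliers for the path constraint. The only points requiring care are bookkeeping: that the pairings of the continuous functions $\alpha,\beta,\gamma$ against the finite signed measures $\tilde\rho-\rho$, $\tilde\omega-\omega$, $\tilde\zeta-\zeta$ on the compact set $[0,1]\times\Omega$ are well defined; that the inclusion $\partial^c D_\delta(\mu)\subseteq\partial D_\delta(\mu)$ is invoked correctly so the subgradient inequality holds in the intended duality; and that competitors of infinite energy are dispatched trivially, since then $D_\delta(\tilde\mu)=+\infty\ge D_\delta(\mu)$ and nothing needs to be checked.
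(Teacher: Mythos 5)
Your proof is correct and follows essentially the same convex-analysis argument as the paper's: the paper groups the constraint into the functional via $\partial^c D_\delta(\mu)+N(\mu)\subset \partial(D_\delta+\iota_{\AHF})(\mu)$ and kills the entire pairing $\langle(\partial_t\phi,\nabla\phi,\phi),\mu'-\mu\rangle$ at once using the shared continuity equation, whereas you keep $D_\delta$ alone and separately verify that the normal-cone piece pairs to zero via the affine constraint --- the same ingredients, merely rearranged. Both routes conclude $D_\delta(\mu')\geq D_\delta(\mu)$ for every competitor in $\CEHF$, so nothing is missing.
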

\begin{proof}
    We note that $\partial^cD_\delta(\mu)+N(\mu)\subset \partial D_\delta(\mu)+\partial\iota_{\CEHF}(\mu)\subset \partial(D_\delta+\iota_{\CEHF})(\mu)$, so if $(\partial_t\phi, \nabla\phi, \phi) \in \partial^c D_\delta(\mu)+N(\mu)$, the triple belongs to the subgradient of $D_\delta(\mu)+\iota_{\AHF}$, i.e., for any $\mu'\in {\CEHF}$,
    \begin{align}
        D_\delta(\mu')- D_\delta(\mu)\geq \langle(\partial_t\phi, \nabla\phi, \phi), \mu'-\mu\rangle =0 
    \end{align}
    where the right-hand side equals zero by definition of the distributional continuity equation. Therefore, we have that $D_\delta(\mu')\geq D_\delta(\mu)$ for any $\mu'\in \CEHF$, which concludes the proof. 
\end{proof}
\begin{example}[Scaling with prescribed total mass]
As an application of the above optimality certification, we will find a minimizer of a pure scaling with a prescribed total mass constraint, i.e., let $\ef:[0,1]\to (0,\infty)$ be a $C^2$ function and consider the affine constraint $\AFF(1,\ef)$. Furthermore let $\rho_0\in \mathcal{M}^+(\Omega)$ such that $\rho_0(\Omega)=\ef(0)$ and let $\rho_1=\frac{\ef(1)}{\ef(0)}\rho_0$ be a pure scaling of $\rho_0$. Then 
\begin{equation}
    \rho = dt \otimes \frac{\ef(t)}{\ef(0)}\rho_0
\end{equation}
defines a minimizer for $(\ref{eqn: constWFR_affine})$ with $\eich=1$, and 
\begin{equation}
    \WFRGM(\rho_0,\rho_1) = \frac{\delta}{\sqrt{2}}\left(\int_{0}^{1} \frac{\ef'(t)^2}{\ef(t)}dt\right)^{1/2}.
\end{equation}
To see this we  define the function $\phi \in C^1([0,1]\times \Omega)$ by 
    \begin{equation}
    \phi(t,x) = \delta^2 \frac{\ef'(t)}{\ef(t)}.
    \end{equation}
    We obtain 
    \begin{equation}
        (\partial_t \phi, \nabla\phi, \phi) =\left( \delta^2 \frac{\ef''(t)\ef(t)- (\ef'(t))^2}{\ef(t)^2}, 0, \delta^2 \frac{\ef'(t)}{\ef(t)}\right)
    \end{equation}
    We will see that this triple decomposes to the sum of elements of $\partial^c D_\delta(\mu)$ and $N(\mu)$ where $\mu =(\rho, 0, dt\otimes \frac{f'(t)}{f(0)}\rho_0)$. Write down $\partial^c D_\delta(\mu)$ in this special case to see that
    \begin{equation}
        \partial^c D_\delta (\mu) = \left\{(\alpha,\beta,\gamma)\middle| \alpha+\frac{1}{2}\left(|\beta|^2+\frac{\gamma^2}{\delta^2}\right)=0-\rho \textrm{ a.e.},\beta=0-\rho\textrm{ a.e.} \textrm{ and }\gamma=\delta^2\frac{\ef'(t)}{\ef(t)}-\rho\textrm{ a.e.}\right\}
    \end{equation}
    Therefore, we observe that 
    \begin{equation}
        \left(-\frac{\delta^2}{2}\frac{\ef'(t)^2}{\ef(t)^2}, 0, \delta^2\frac{\ef'(t)}{\ef(t)}\right) \in \partial^c D_\delta(\mu)
    \end{equation}
    We will now consider $N(\mu)$. Since $\eich=1$, we have $N(\mu)=\{(g,0,0)|g\in C([0,1])\}$, so  
    \begin{equation}
        \left( \delta^2\frac{\ef''(t)\ef(t)- (1/2)(\ef'(t))^2}{\ef(t)^2}, 0, 0 \right)\in N(\mu)
    \end{equation}
    Finally, we have 
    \begin{align}
     &\left(-\frac{\delta^2}{2}\frac{\ef'(t)^2}{\ef(t)^2}, 0, \delta^2\frac{\ef'(t)}{\ef(t)}\right) + \left( \delta^2\frac{\ef''(t)\ef(t)- (1/2)(\ef'(t))^2}{\ef(t)^2}, 0, 0 \right) \\&= \left( \delta^2 \frac{\ef''(t)\ef(t)- (\ef'(t))^2}{\ef(t)^2}, 0, \delta^2 \frac{\ef'(t)}{\ef(t)}\right) = (\partial_t \phi, \nabla\phi, \phi) \in \partial^cD_\delta(\mu) +N(\mu)
     \end{align}
     Thus, $\phi$ is an optimal certificate, and $\mu$ is a minimizer of the problem. We now calculate the optimal energy. By definition, $\rho$ and the source term $dt\otimes \frac{\ef'(t)}{\ef(0)}\rho_0$ are absolutely continuous with respect to $\lambda =dt\otimes \rho_0$, so
    \begin{equation}
        \WFRGM\rhosq = \delta^2\int_{0}^{1}\int_{\Omega} \frac{(\ef'(t)/\ef(0))^2}{2(\ef(t)/\ef(0))}d\rho_0dt = \frac{\delta^2}{2} \int_{0}^{1} \frac{\ef'(t)^2}{\ef(t)}dt. 
    \end{equation}
\end{example}
{
\subsubsection{Optimality certification for time-independent constraints}
\label{sec:suff-cond-optimal}
In the situation of time-independent constraint functions $H$ and $F$, and assuming sufficient regularity on $H$, one can derive a slightly different form for the sufficient optimality certificate by expressing the constraints on $\omega$ and $\zeta$ instead of $\rho$, as in~Lemma \ref{lem:affine_constraint_rho_zeta}. This will allow us to directly relate it to the geodesic equations formally derived in Section \ref{sec:constrained_WFR_distance}. We first prove the following:
\begin{lemma}
    Let $H \in C^1(\Omega)$ and $C \in \mathbb{R}^d$. The normal cone $N(\mu)$ of $\CEHC$, i.e., the subdifferential of $\iota_{\CEHC}$ at $\mu=(\rho,\omega,\zeta) \in \CEHC$, contains the set
    \begin{equation}
        N_{\omega,\zeta}(\mu) = \left\{\left(0, \sum_{i=1}^{d}\nabla H_i(x) g_i(t), \sum_{i=1}^{d}H_i(x) g_i(t)\right)\middle| g_i\in C([0,1])\right\}.
    \end{equation}
\end{lemma}
\begin{proof}
    Let $\mu = (\rho, \omega, \zeta) \in \CEHC$. The normal cone of $\CEHC$ at $\mu$ is the set of all $(\alpha, \beta, \gamma) \in C([0,1]\times \Omega; \mathbb{R}\times\mathbb{R}^n\times \mathbb{R})$ such that
    \begin{equation}
        \langle (\alpha, \beta, \gamma), \nu - \mu \rangle \leq 0 \quad \forall \nu \in \CEHC.
    \end{equation}
    Equivalently, for any $\nu = (\bar{\rho}, \bar{\omega}, \bar{\zeta}) \in \CEHC$, we should have
    \begin{equation}
        \int_{[0, 1]\times \Omega}\alpha d(\bar{\rho}-\rho) + \int_{[0, 1]\times \Omega}\beta \cdot d(\bar{\omega} - \omega) + \int_{[0, 1]\times \Omega}\gamma d(\bar{\zeta} - \zeta) \leq 0.
    \end{equation}
    By substituting $\alpha = 0$, $\beta = \sum_{i=1}^{d}\nabla H_i(x)g_i(t)$ and $\gamma = \sum_{i=1}^{d}H_i(x)g_i(t)$ for some $g_i \in C([0,1])$, we have
    \begin{align}
        \int_{[0, 1]\times \Omega}\sum_{i=1}^{d}\nabla H_i(x)g_i(t) d(\bar{\omega}-\omega) + \int_{[0, 1]\times \Omega}\sum_{i=1}^{d}H_i(x)g_i(t) d(\bar{\zeta}-\zeta) = 0
    \end{align}
    by Lemma \ref{lem:affine_constraint_rho_zeta}. Therefore, $(0, \sum_{i=1}^{d}\nabla H_i(x) g_i(t), \sum_{i=1}^{d}H_i(x) g_i(t)) \in N(\mu)$.
\end{proof}
We will now show that the geodesic equation is a sufficient condition for optimality.
\begin{theorem}
\label{thm:opt_cond_stat}
    Let $\rho_0, \rho_1 \in \MplushC, H\in C^1(\Omega)$ and $C \in \mathbb{R}^d$. For $\mu=(\rho,\omega, \zeta)\in \CEHC$, if there exist $\Phi \in C^1([0,1]\times \Omega), g_i\in C([0,1]),i=1,\cdots, d$ such that
    \begin{align}
        &\partial_t \Phi + \frac{1}{2}\|\nabla \Phi(t, \cdot) - \nabla \bar{\Phi}(t, \cdot)\|^2 + \frac{1}{2\delta^2}(\Phi(t, \cdot) - \bar{\Phi}(t, \cdot))^2 \leq 0, \ \text{with equality } \rho \text{-a.e.} \\
        &\omega = (\nabla \Phi -  \nabla \bar{\Phi})\rho, \\  
        &\zeta = \frac{1}{\delta^2}(\Phi - \bar{\Phi})\rho
    \end{align}
    with $\bar{\Phi}(t,x) = \sum_{i=1}^{d}H_i(x)g_i(t)$, then $\mu$ is a minimizer of $(\ref{eqn: constWFR_affine})$. 
    % In this case, the continuity equation is formally written as
    % \begin{equation}
    %     \partial_t \rho - \nabla\cdot (\nabla \Phi - \nabla \bar{\Phi})\rho = -\frac{1}{\delta^2}(\Phi - \bar{\Phi})\rho
    % \end{equation}
    % and for a.e. $t \in [0,1]$, $g(t)=(g_i(t))_{i=1,\ldots,d}$ must solve the linear system of equations given in \eqref{eq:expr_bar_phi}.   
\end{theorem}
\begin{proof}
We indeed have $(\partial_t \Phi, \nabla \Phi - \nabla \bar{\Phi}, \Phi - \bar{\Phi}) \in \partial^c D_\delta(\mu)$ from the first condition, while on the other hand one has $(0, \sum_{i=1}^{d} \nabla H_i(x) g_i(t), \sum_{i=1}^{d}H_i(x) g_i(t)) \in N_{\omega, \zeta} \subset N(\mu)$. It follows that: 
\begin{equation*}
    (\partial_t \Phi, \nabla \Phi, \Phi) = (\partial_t \Phi, \nabla \Phi - \nabla \bar{\Phi}, \Phi - \bar{\Phi}) + (0, \sum_{i=1}^{d} \nabla H_i(x) g_i(t), \sum_{i=1}^{d}H_i(x) g_i(t)) \in \partial^c D_\delta(\mu) + N(\mu)
\end{equation*}
and thus Theorem \ref{thm:opt_certificate} implies that $\mu$ is a minimizer.
\end{proof}
\begin{remark}
It is worth noting that under the conditions of Theorem \ref{thm:opt_cond_stat} on $\omega$ and $\zeta$, and using the fact that $\mu \in \AFF(H, C)$, it follows necessarily that for all $i=1,\ldots,d$ and a.e. $t \in [0,1]$,
\begin{align*}
    \int_{\Omega} \nabla H_i \cdot (\nabla \Phi - \nabla \bar{\Phi}) d\rho_t +\frac{1}{\delta^2} \int_{\Omega} H_i(\Phi - \bar{\Phi}) d\rho_t = 0,
\end{align*}
which can be rewritten, using the notations of Section \ref{sec:constrained_WFR_distance}, as:
\begin{align*}
    \sum_{j=1}^{d} \langle H_i, H_j \rangle_{W^{1,2}_{\rho_t}} \, g_j(t) = \langle H_i, \Phi(t,\cdot) \rangle_{W^{1,2}_{\rho_t}}.
\end{align*}
This is exactly the same linear system on $(g_i(t))$ as \eqref{eq:expr_bar_phi} and thus $\bar{\Phi}(t,\cdot)$ is again the orthogonal projection of $\Phi(t,\cdot)$ onto the space spanned by the $H_i$'s with respect to $\langle\cdot,\cdot\rangle_{W^{1,2}_{\rho_t}}$.    
As a consequence, the above sufficient condition of Theorem \ref{thm:opt_cond_stat} becomes identical to the necessary condition \eqref{eq:opt_cond2} in the case where $(\rho,\omega,\zeta)$ are all densities since the continuity equation then writes as:
    \begin{equation*}
        \partial_t \rho + \nabla\cdot (\nabla \Phi - \nabla \bar{\Phi})\rho = \frac{1}{\delta^2}(\Phi - \bar{\Phi})\rho
    \end{equation*}
    for all $(t,x)$ in $[0,1] \times \Omega$.
\end{remark}
}

\section{Constrained unbalanced optimal transport on Riemannian manifolds}\label{sec:manifolds}
In this section{,} we will briefly outline how one can generalize the developed theory to the manifold domain case, albeit under certain restrictive assumptions. Let us denote by $M$ a Riemannian manifold of dimension $m$. There are several specific issues with dealing with measures on $M$. {The} first one is the definition of the transport measure $\omega$. Following the setting of \cite{gallouet2021regularity}, we define a momentum field on $M$ as follows:
\begin{definition}[Momentum field] A \textbf{momentum field} on a Riemannian manifold $M$ is an element of the dual to $\Gamma_0([0,1] \times M)$, the space of continuous functions $v:[0,1]\times M\to TM$ such that $\pi \circ v(t,x)=x$ for any $t\in [0,1]$ and $x\in M$ where $\pi: TM\to M$ is the usual projection. We will denote the space of momentum fields by $\mathcal{V}([0,1]\times M)$.
\end{definition}
This definition generalizes the concept of $\mathbb{R}^n$-valued vector measures by taking the Riesz-Markov-Kakutani theorem \cite[Theorem 7.17]{folland2013real} as motivation.
Just as an Euclidean momentum field $\omega \in \mathcal{M}([0,1]\times \Omega)^n$ is the dual of $\mathbb{R}^n$-valued continuous maps by the aforementioned theorem, we use the dual of continuous vector fields for the momentum field. This allows us to extend the continuity equation to the Riemannian case:
\begin{definition}[Riemannian continuity equation]
    Let $\rho_0,\rho_1\in \mathcal{M}^+(M)$. We say that a triple $(\rho,\omega,\zeta)\in \mathcal{M}^+([0,1]\times M)\times \mathcal{V}([0,1]\times M)\times \mathcal{M}([0,1]\times M)$ satisfies the continuity equation $\partial_{t} \rho +\nabla\cdot \omega = \zeta$ with boundary conditions  $\rho_0$ and $\rho_1$ in the distributional sense if, for any $\phi \in C^1([0,1]\times M)$,  
    \begin{equation}
        \int_{[0,1]\times M}\partial_{t}\phi d\rho +\omega(\nabla_x \phi) + \int_{[0,1]\times M}\phi d\zeta = \int_{M}\phi(1,\cdot)d\rho_1-\int_{M}\phi(0,\cdot)d\rho_0 
    \end{equation}
\end{definition}
We also define the set of triples $(\rho,\omega,\zeta)$ satisfying the continuity equation above as $\mathcal{CE}(\rho_0,\rho_1)$. It can be shown, in the same way as in the Euclidean situation (Proposition \ref{prop: disintegration in time}), that if $\rho$ satisfies the continuity equation, then $\rho$ can be disintegrated in time with respect to the Lebesgue measure{,} i.e.{,} $\rho = dt \otimes \rho_t$. We can thus extend the previous notation and again write $\CEHF$ for the set of all triples $(\rho,\omega,\zeta) \in \mathcal{CE}(\rho_0,\rho_1)$ that further satisfy the constraint $\int_M \eich d\rho_t = \ef(t)$ for a.e. $t \in [0,1]$.
%\todonicolas{I think we need to write the time disintegration argument for the manifold case as well at some point.}

Given two measures $\rho_0,\rho_1 \in \mathcal{M}^+(M)$, we may then adapt the previous definition to the manifold case and introduce the following constrained unbalanced OT problem:
\begin{equation}
\label{eq:constrained_UOT_M}
\WFRAF(\rho_0,\rho_1)^2 =\inf\left\{ \int_{[0,1] \times M} f_{\delta,x}\left(\frac{d \mu}{d \lambda}(t,x)\right) d \lambda(t,x) \ \middle| \ \mu=(\rho,\omega,\zeta)\in \CEHF(\rho_0,\rho_1)\right\}
\end{equation}
where $\lambda$ is any nonnegative measure such that $\rho,\omega,\zeta \ll \lambda$ and $f_{\delta,x}:\mathbb{R} \times T_xM^* \times \mathbb{R} \rightarrow [0, +\infty]$ is given by:
\begin{equation}
\label{eq:def_fdel_M}
   f_{\delta,x}(a,b,c) = \begin{cases}
        \frac{g_x^{-1}(b, b)+\delta^2 c^2}{2a}, & a>0 \\ 0, & (a,b,c)=(0,0,0) \\ +\infty, & \textrm{otherwise}
    \end{cases}
\end{equation}
where $g_x$ denotes the Riemannian metric tensor at $x \in M$ while its inverse $g_x^{-1}$ is the cometric. 

% Consequently the constrained, unbalanced optimal transport energy can be defined as follows:
% \begin{definition}[Riemannian affine-constrained WFR] Let $\rho_0,\rho_1\in \mathcal{M}^+(M)$. For $\delta>0$, define $f_{\delta}:\mathbb{R}\times T^*M\times\mathbb{R} \to \mathbb{R}\cup \{+\infty\}$ by
% \begin{equation}
%     f_\delta(\rho,\omega,\zeta) = \begin{cases}
%         \frac{g^{-1}(\omega, \omega)+\delta^2 \zeta^2}{2\rho}, & x>0 \\ 0, & (\rho,\omega,\zeta)=(0,0,0) \\ +\infty, & \textrm{otherwise}
%     \end{cases}
% \end{equation} where $g^{-1}$ represents the inverse metric on $M$.
% We define the affine-constrained Wasserstein-Fisher-Rao optimal energy by the same equation as (\ref{eqn: constWFR_affine}) where $\Omega$ is replaced by $M$ and using the new $f_\delta$.
% \end{definition}
% \todonicolas{I am not sure what the expression $g^{-1}(\omega, \omega)$ really means in the above. I have the impression we should rather introduce a function $f_{\delta,x}$ that depends on the footpoint $x$ and which is the Fenchel dual of the indicator function given in the lemma below.}

As one can see, although the formulation remains quite similar to the Euclidean case, the cost function integrand's dependency on the point $x$ itself in particular requires adapting the convex duality argument used to prove the existence of minimizing paths. First, let us state and prove the following lemma:
\begin{lemma}
\label{lemma:conjugate_manifold_case}
Let $x \in M$ and denote $B_{\delta,x}=\left\{(\alpha,\beta,\gamma) \in \mathbb{R} \times T_x M \times \mathbb{R} \ | \ \alpha + \frac{1}{2} \left(g_x(\beta,\beta) + \frac{\gamma^2}{\delta^2}\right) \leq 0\right\}$. The Fenchel conjugate of the convex indicator $\iota_{B_{\delta,x}}$ is the function $f_{\delta,x}$ defined in \eqref{eq:def_fdel_M}. 
\end{lemma}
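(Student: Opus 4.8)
The plan is to compute the Fenchel conjugate directly, exploiting that, for fixed $x$, this is a purely finite-dimensional calculation on the vector space $\mathbb{R} \times T_x M \times \mathbb{R}$ paired with its dual $\mathbb{R} \times T_x M^* \times \mathbb{R}$. By definition, for $(a,b,c) \in \mathbb{R} \times T_x M^* \times \mathbb{R}$,
\[
\iota_{B_{\delta,x}}^*(a,b,c) = \sup_{(\alpha,\beta,\gamma) \in B_{\delta,x}} \left\{ a\alpha + \langle b, \beta\rangle + c\gamma\right\},
\]
where $\langle b, \beta\rangle = b(\beta)$ is the natural pairing between $T_x M^*$ and $T_x M$. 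I would first dispose of the degenerate cases. If $a < 0$, then letting $\alpha \to -\infty$ with $\beta = 0$ and $\gamma = 0$ (which remains in $B_{\delta,x}$) drives $a\alpha \to +\infty$, so the supremum is $+\infty$. If $a = 0$, the objective does not involve $\alpha$, which can always be chosen negative enough to satisfy the constraint, so $\beta$ and $\gamma$ range freely; the supremum is then $0$ when $b = 0$ and $c = 0$ and $+\infty$ otherwise. Both outcomes agree with $f_{\delta,x}$.

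For the main case $a > 0$, the objective is strictly increasing in $\alpha$ while the constraint only bounds $\alpha$ from above, so the optimal $\alpha$ saturates the constraint, $\alpha = -\tfrac{1}{2}\bigl(g_x(\beta,\beta) + \gamma^2/\delta^2\bigr)$. Substituting reduces the problem to the unconstrained concave maximization
\[
\sup_{\beta \in T_x M,\ \gamma \in \mathbb{R}} \left\{ -\frac{a}{2}\Bigl(g_x(\beta,\beta) + \frac{\gamma^2}{\delta^2}\Bigr) + \langle b, \beta\rangle + c\gamma \right\},
\]
which decouples into a scalar part in $\gamma$ and a quadratic part in $\beta$. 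The $\gamma$-maximization is elementary, with maximizer $\gamma = \delta^2 c/a$ and value $\delta^2 c^2/(2a)$. The $\beta$-maximization is the Legendre transform of the quadratic form $\tfrac{a}{2}g_x(\cdot,\cdot)$, with maximizer $\beta = a^{-1} b^{\sharp}$ (where $b^{\sharp} \in T_x M$ is the metric dual of $b$) and value $g_x^{-1}(b,b)/(2a)$. Summing the two contributions yields exactly $f_{\delta,x}(a,b,c) = \bigl(g_x^{-1}(b,b) + \delta^2 c^2\bigr)/(2a)$.

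I expect the only genuinely delicate point to be the $\beta$-maximization, where one must correctly relate the metric $g_x$ on $T_x M$ to the cometric $g_x^{-1}$ on $T_x M^*$ through the musical isomorphism; the key identity is
\[
\sup_{\beta \in T_x M}\left\{ \langle b, \beta\rangle - \frac{a}{2} g_x(\beta,\beta)\right\} = \frac{1}{2a}\, g_x^{-1}(b,b),
\]
that is, the Legendre dual of $\tfrac{a}{2}g_x$ is $\tfrac{1}{2a}g_x^{-1}$, which follows from $g_x(b^{\sharp}, b^{\sharp}) = \langle b, b^{\sharp}\rangle = g_x^{-1}(b,b)$. Alternatively, one may sidestep this computation entirely by fixing a $g_x$-orthonormal basis of $T_x M$: in the induced coordinates $g_x$ becomes the standard inner product, the cometric $g_x^{-1}$ becomes the standard inner product in the dual coordinates, and $B_{\delta,x}$, $f_{\delta,x}$ reduce to their Euclidean counterparts $B_{\delta}$, $f_{\delta}$, so the statement follows from the Euclidean computation already recorded after Definition \ref{def:wfr_distance} (see also \cite{chizat2018interpolating}). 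Apart from this identification, the argument is routine finite-dimensional convex analysis.
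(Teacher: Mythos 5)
Your proof is correct and follows essentially the same route as the paper's: a direct computation of the conjugate, disposing of the cases $a<0$ and $a=0$ with the same limiting arguments, and for $a>0$ saturating the constraint in $\alpha$ and solving the decoupled quadratic maximizations in $\gamma$ and $\beta$, using the musical isomorphism identity $g_x(b^{\sharp},b^{\sharp}) = g_x^{-1}(b,b)$ exactly as the paper does with its $\bar{b}$. The orthonormal-basis reduction you mention as an alternative is a valid shortcut, but your main argument and the paper's proof coincide.
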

\begin{proof}
    By definition of the Fenchel conjugate, we have: 
    \begin{equation*}
       f_{\delta,x}(a,b,c) = \sup_{(\alpha,\beta,\gamma)} a \alpha + \langle b, \beta \rangle + c \gamma - \iota_{B_{\delta,x}}(\alpha,\beta,\gamma) = \sup_{(\alpha,\beta,\gamma) \in B_{\delta,x}} a \alpha + \langle b, \beta \rangle + c \gamma
    \end{equation*}
    First, note that if $(a,b,c)=(0,0,0)$ then obviously $f_{\delta,x}(a,b,c)=0$. If $a<0$, taking $(\alpha,\beta,\gamma) = (ta,0,0)$, and letting $t \rightarrow +\infty$ shows that $f_{\delta,x}(a,b,c)=+\infty$. In addition, when $a=0$ but $(b,c) \neq (0,0)$, it suffices to set $(\beta,\gamma) = t (\bar{b},c)$, where $\bar{b}\in T_x M$ is such that $\langle b, v \rangle = g_x(\bar{b},v)$ for any $v \in T_x M$, and $\alpha = -\frac{1}{2}\left(g_x(t \bar{b},t \bar{b}) + \frac{(tc)^2}{\delta^2}\right)$. Then letting $t \rightarrow + \infty $ leads again to $f_{\delta,x}(a,b,c)=+\infty$. Let us thus consider the last remaining case in which $a>0$. For any $(\alpha,\beta,\gamma) \in B_{\delta,x}$, one has $\alpha \leq -\frac{1}{2} \left(g_x(\beta,\beta) + \frac{\gamma^2}{\delta^2}\right)$ so that:
    \begin{equation*}
       a \alpha + \langle b, \beta \rangle + c \gamma \leq -\frac{a}{2} \left(g_x(\beta,\beta) + \frac{\gamma^2}{\delta^2}\right) + \langle b, \beta \rangle + c \gamma
    \end{equation*}
    The maximum of the right hand side with respect to $\gamma$ is achieved at $\hat{\gamma} = \delta^2 \frac{c}{a}$ for which we find $-\frac{a \hat{\gamma}^2}{2\delta^2} + c \hat{\gamma} = \frac{\delta^2 c^2}{2a}$. Similarly, the maximum value with respect to $\beta$ is achieved for $\hat{\beta} = \frac{1}{a}\bar{b}$ in which case: 
    \begin{equation*}
        -\frac{a}{2} g_x(\hat{\beta},\hat{\beta}) + \langle b, \beta \rangle = \frac{g_x(\bar{b},\bar{b})}{2a} = \frac{g_x^{-1}(b,b)}{2a}
    \end{equation*}
    Combined with the above, we therefore obtain $f_{\delta,x}(a,b,c)=\frac{g_x^{-1}(b,b) + \delta^2 c^2}{2a}$.
\end{proof}
%\section{Examples of constrained optimal transport problems}
%\todomartin{TODO: Discuss several examples:
%}
%\todotom{Maybe we should include examples closer to definitions of the constrained problems, rather than having a separate section?}
%\begin{itemize}
%    \item the situation of area measures for convex sets (problem, its one on a Riemannina manifold)
%    \item the one where we fix the total mass to be one.
%    \item the one where we prescribe the total mass.
%    \item the one where we describe the evolution of some moments.
%    \item  the moving obstacle case.
%\end{itemize}

Note that $f_{\delta,x}$ is also a 1-homogeneous function. We can now state a generalized version of Theorem \ref{thm: AffWFRDuality_time}. In the following, the manifold $M$ is said to be parallelizable if it is possible to find a set of $m$ continuous tangent vector fields on $M$ written $\{v_1,\ldots,v_m\}$ such that for all $x \in M$, $\{v_1(x),\ldots,v_m(x)\}$ is a basis of $T_x M$.
\begin{theorem}[Existence of minimizers on parallelizable manifolds]
    \label{thm:AffWFRDuality_time_M} 
    Assume that $M$ is a parallelizable manifold and let $\rho_0,\rho_1\in\Mplus(M)$ and $\eich:[0,1]\times M \to \mathbb{R}^d$ be a continuous function, $\ef=(\ef_1,\ldots,\ef_d):[0,1]\to \mathbb{R}^d$ be a function with integrable component functions $\ef_i$. If there exists a finite energy path in $\CEHF$, then there exists a minimizing path to problem \eqref{eq:constrained_UOT_M}.
\end{theorem}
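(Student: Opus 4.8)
The plan is to reproduce the Fenchel--Rockafellar argument used to prove Theorem~\ref{thm: AffWFRDuality_time}, the only genuinely new ingredients being the $x$-dependent integrand $f_{\delta,x}$ and the momentum field $\omega$, both of which are handled by exploiting parallelizability. First I would fix a global continuous frame $\{v_1,\ldots,v_m\}$ of $TM$. Writing a continuous vector field as $\beta = \sum_{j=1}^m \beta^j v_j$ identifies $\Gamma_0([0,1]\times M)$ with $C([0,1]\times M)^m$ and, dually, the space $\mathcal{V}([0,1]\times M)$ of momentum fields with $\mathcal{M}([0,1]\times M)^m$ through the Riesz--Markov--Kakutani theorem \cite[Theorem 7.17]{folland2013real}. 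In this frame the cometric is represented by the inverse of the Gram matrix $G(x) = (g_x(v_i,v_j))_{i,j}$, which is continuous and positive-definite in $x$, so that $f_{\delta,x}$ becomes a family of Euclidean-type costs depending continuously on $x$ through $G(x)^{-1}$.

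With this identification in hand, I would set up the primal problem $\inf_{x\in X}\{F(Ax)+G(x)\}$ exactly as in the Euclidean case, taking
\[
X = C^1([0,1]\times M)\times C([0,1])^d, \qquad Y = C([0,1]\times M)\times \Gamma_0([0,1]\times M)\times C([0,1]\times M),
\]
the bounded linear map $A(\phi,\psi) = \big(\partial_t\phi + \sum_{i=1}^d \eich_i\psi_i,\ \nabla_x\phi,\ \phi\big)$ with $\nabla_x\phi$ the Riemannian gradient, the same linear functional $G$ built from $\rho_0$, $\rho_1$ and $\ef$, and
\[
F(\alpha,\beta,\gamma) = \int_{[0,1]\times M} \iota_{B_{\delta,x}}\big(\alpha(t,x),\beta(t,x),\gamma(t,x)\big)\, dt\, d\mathrm{vol}(x),
\]
where $B_{\delta,x}$ is the $x$-dependent set of Lemma~\ref{lemma:conjugate_manifold_case}. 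The constraint qualification of Theorem~\ref{thm:Fenchel_Rockafellar} I would verify with the same point $x_0 = (\phi_0,0)$, $\phi_0(t,x) = -\delta^2 t$: a direct check gives $\alpha_0 + \tfrac12\big(g_x(\beta_0,\beta_0)+\gamma_0^2/\delta^2\big) = -\delta^2 + \tfrac12\delta^2 t^2 \leq -\tfrac12\delta^2 < 0$ uniformly in $(t,x)$ (the second slot $\beta_0=\nabla_x\phi_0$ vanishes), so $Ax_0$ lies uniformly inside the constraints, $F \equiv 0$ on a sup-norm neighborhood of $Ax_0$, and $G(x_0)$ is finite.

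Next I would compute the two conjugate terms. The computation of $G^*(-A^*y^*)$ is formally identical to the Euclidean one: testing against $\phi$ recovers the distributional Riemannian continuity equation while testing against $\psi_i$ enforces the affine constraint, so that $G^*(-A^*y^*)$ equals $0$ on $\CEHF$ and $+\infty$ otherwise. The heart of the matter, and the step I expect to be the main obstacle, is identifying $F^*$ with the objective of \eqref{eq:constrained_UOT_M}. Here I would combine the pointwise conjugation $(\iota_{B_{\delta,x}})^* = f_{\delta,x}$ of Lemma~\ref{lemma:conjugate_manifold_case} with Rockafellar's interchange theorem for integral functionals \cite[Theorem 5]{rockafellar_integrals_1971}, as in \cite{chizat2018interpolating}. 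The role of parallelizability is precisely to make this legitimate: after trivialization, $(t,x,a,b,c)\mapsto \iota_{B_{\delta,x}}(a,b,c)$ is a normal convex integrand on $[0,1]\times M\times(\mathbb{R}\times\mathbb{R}^m\times\mathbb{R})$ with continuous $(t,x)$-dependence through $G(x)^{-1}$, so the interchange theorem applies to the resulting $\mathbb{R}^m$-valued problem and yields $F^*(y^*)=\int_{[0,1]\times M} f_{\delta,x}(d\mu/d\lambda)\,d\lambda$.

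Finally, combining these computations shows the dual problem equals the negative of the constrained energy \eqref{eq:constrained_UOT_M}. Since a finite energy path is assumed to exist, this common value is finite, and Theorem~\ref{thm:Fenchel_Rockafellar} guarantees that the supremum in the dual is attained by some $y^* = (\rho,\omega,\zeta)\in\CEHF$; this maximizer is the sought minimizing path. The only conceptual difficulty beyond the Euclidean proof is thus the $x$-dependence of the cost, which parallelizability reduces to a continuous-coefficient Euclidean problem; in particular the argument does not obviously extend to non-parallelizable $M$, for which a partition-of-unity or bundle-valued version of Rockafellar's theorem would be required.
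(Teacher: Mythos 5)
Your proposal is correct and takes essentially the same approach as the paper's proof: the identical Fenchel--Rockafellar setup (same $X$, $Y$, $A$, $F$, $G$), the same use of parallelizability to trivialize $\Gamma_0([0,1]\times M)$ as $C([0,1]\times M)^m$ so that Lemma~\ref{lemma:conjugate_manifold_case} and Rockafellar's interchange theorem identify $F^*$ with the constrained energy, and dual attainment producing the minimizing path. Your explicit constraint-qualification check with $\phi_0(t,x)=-\delta^2 t$ and the Gram-matrix representation of the cometric merely fill in details the paper leaves implicit.
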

\begin{proof}
    The proof in part follows a similar path as that of Theorem \ref{thm: AffWFRDuality_time} above, so we will mainly highlight the steps which differ. We shall apply again the Fenchel-Rockafellar theorem to the primal problem: $\inf_{\tau=(\phi,\psi)} {\mathcal F}(A \tau) + {\mathcal G}(\tau)$ in which $\tau\in X=C^{1}([0,1] \times M) \times C([0,1])^d$, $Y=C([0,1] \times M) \times \Gamma^0([0,1] \times M) \times C([0,1] \times M)$ and $A$ is the bounded linear operator $A:X \rightarrow Y$ defined by $A(\phi,\psi) = (\partial_t \phi + \sum_{i=1}^{d} \eich_i \psi_i,\nabla \phi, \phi)$. The functional ${\mathcal G}:X \rightarrow \mathbb{R} \cup \{+\infty\}$ is defined in the same way as in the proof of Theorem \ref{thm: AffWFRDuality_time} while ${\mathcal F}:Y \rightarrow \mathbb{R} \cup \{+\infty\}$ is given by:
    $$
    {\mathcal F}(\alpha,\beta,\gamma) = \int_{[0,1] \times M} \iota_{B_{\delta,x}}(\alpha(t,x),\beta(t,x),\gamma(t,x)) dt \, d\text{vol}_M(x)
    $$
    with $\text{vol}_M$ being the Riemannian volume measure on $M$. Note that both ${\mathcal F}$ and ${\mathcal G}$ are convex functionals and that, by definition, $Y^*$ can be identified with the product of measure spaces $\mathcal{M}([0,1]\times M)\times \mathcal{V}([0,1]\times M)\times \mathcal{M}([0,1]\times M)$. Fenchel-Rockafellar theorem once again applies so that: 
    $$
    \inf_{\tau \in X} \{{\mathcal F}(A \tau) + {\mathcal G}(\tau) \} = \sup_{\mu \in Y^*} \{-{\mathcal F}^*(\mu) - {\mathcal G}^*(-A^* \mu)\}
    $$
    The term involving the convex conjugate of ${\mathcal G}$ can be obtained in the same way as previously and one gets that ${\mathcal G}^*(-A^*\mu)$ is again the convex indicator of the constraint set $\CEHF$. An extra difficulty compared to the Euclidean case is to recover the expression of ${\mathcal F}^*(\mu)$. We here rely on the parallalizability assumption on $M$ and the existence of continuous tangent vector fields $\{v_1,\ldots,v_m\}$ introduced above. This allows us to identify $\Gamma_0([0,1] \times M)$ with $C([0,1] \times M)^m$. Indeed, for any $\beta \in \Gamma_0([0,1] \times M)$, we may decompose $\beta(t,x)$ for $(t,x) \in [0,1] \times M$ in the basis $\{v_1(x),\ldots,v_m(x)\}$ as $\beta(t,x) = \sum_{i=1}^{m} \beta_i(t,x) v_i(x)$ and thus identify $\beta$ with $(\beta_1,\ldots,\beta_m) \in C([0,1] \times M)^m$. Under this convention, $(\alpha,\beta,\gamma)$ maps $(t,x) \in [0,1]\times M$ to the fixed Euclidean space $\mathbb{R}^{m+2}$ and \cite[Theorem 5]{rockafellar_integrals_1971} still applies to the integral functional $F$ in this case as well. Using Lemma \ref{lemma:conjugate_manifold_case} above together with the fact that $f_{\delta,x}$ is 1-homogeneous (which implies that it is equal to its recession function), we then get:
    \begin{equation*}
        {\mathcal F}^*(\mu) = \int_{[0,1] \times M} f_{\delta,x}\left(\frac{d \mu}{d \lambda}(t,x)\right) d \lambda(t,x).
    \end{equation*}
    From this, it follows once again that the dual problem $\sup_{\mu \in Y^*} \{-{\mathcal F}^*(\mu) - {\mathcal G}^*(-A^* \mu)\} = -\inf_{\mu \in Y^*} \{{\mathcal F}^*(\mu) + {\mathcal G}^*(-A^* \mu)\}$ is equivalent to the infimum of the original problem \eqref{eq:constrained_UOT_M} which allows to conclude the proof.
   %  In order to apply \cite[Theorem 5]{rockafellar_integrals_1971}, we first introduce a chart for the manifold $M$, that we denote $(U_i,p_i,V_i)_{i=1,\ldots,r}$ in which the $U_i$'s are open subsets of $\R^m$, $p_i:U_i \rightarrow M$ are smooth parametrization functions and $V_i =p_i(U_i) \subset M$. We also take $\{\chi_i\}$ with $\chi_i: M \rightarrow [0,1]$ a smooth partition of unity subordinate to the associated covering of $M$. Then for any $(\alpha,\beta,\gamma)$, we can write:
   %  \begin{align*}
   %  F(\alpha,\beta,\gamma) &= \sum_{i=1}^{r} \int_{[0,1]\times V_i} \chi_i(x) \, \iota_{B_{\delta,x}}(\alpha(t,x),\beta(t,x),\gamma(t,x)) dt \, d\text{vol}_M(x) \\
   %  &=\sum_{i=1}^{r} \int_{[0,1]\times V_i} \chi_i(x) \, \iota_{B_{\delta,x}}(\alpha(t,x),\beta^i(t,x),\gamma(t,x)) dt \, d\text{vol}_M(x) 
   %  \end{align*}
   % where for each $i=1,\ldots,r$, $\beta^i \in C([0,1] \times M)^m$ with $\beta^i(t,x)$ denoting the coordinates of $\beta(t,x)$ at $x \in V_i$ in the basis of $T_x M$ induced from the parametrization $p_i$.  
   % \todonicolas{I don't see how to exactly proceed from here. The theorem in the Rockafellar article applies to each of the term in the sum over the different parts of the partition but the issue is that the convex conjugate of a sum of functional does not have a simple expression (it is the inf-convolution of the respective conjugates)...}
\end{proof}
It is worth pointing out that the assumption made on $M$ in Theorem \ref{thm:AffWFRDuality_time_M} includes manifolds such as {Lie groups (for instance,} $S^1$, $S^3$ and higher-dimensional tori{)} as well as manifolds that are the image of a single parametrization function on an open domain of an Euclidean space. However, it does not cover several important cases, {including the 2-sphere, which is not parallelizable}. We suspect however that this could be extended to general compact manifolds based on a careful refinement of the results from \cite{rockafellar_integrals_1971} that we used in the proof.  

Regarding the existence of a finite energy path between two given measures $\rho_0$ and $\rho_1$ in $\mathcal{M}^+(M)$, adapting the proofs of Theorems \ref{thm: Affh0 Finite Energy Path_time_constant}, \ref{thm:general_finite_energy_existence} and \ref{prop:Affh0_Finite_Energy_Path} to the manifold case is rather straightforward as most of the arguments do not rely on the particular Euclidean domain structure of $\Omega$. The statements of these theorems extend mutatis mutandis by replacing $\Omega$ with a compact Riemannian manifold $M$ and replacing the convexity assumption on $\Omega$ (when needed) with $M$ being geodesically complete.  

\begin{example}[Area measures of convex domains]
\label{ex:area_measures}
 An example of measure space which naturally fits the manifold UOT framework of this section is the case of the area measures of convex domains in $\mathbb{R}^d$ mentioned earlier in the introduction, which was also previously considered by some of the authors in \cite{charon_length_2021} and \cite{bauer2022square}. {In the following, we will briefly describe the connection with the present work but shall leave aside the technical details related to the definition and properties of area measures:}  to any oriented convex domain $Q$ in $\mathbb{R}^d$, one can associate its area measure $\rho_Q$ which is the positive Radon measure on the unit sphere $S^{d-1}$ (viewed as a submanifold of $\mathbb{R}^d$) such that for any Borel set $B$, $\rho_Q(B)$ measures the $(d-1)$-area of the portion of the boundary $\partial Q$ where the unit outward normal $\vec{n}(x) \in B$. A central result in convex geometry is that the area measure characterizes a convex setup to translation \cite{fenchel1938}. In fact, $Q \mapsto \rho_Q$ induces a bijective correspondence between convex sets modulo translations and the space of all measures $\Meas^+(S^{d-1})$ that satisfy the ``closure'' constraint $\int_{S^{d-1}} x d\rho(x) = 0 \in \mathbb{R}^d$. By defining $H:x\in S^{d-1} \mapsto x \in \mathbb{R}^d$, the latter space is precisely $\Meas^+_{H,0}(S^{d-1})$ in the setting of this paper. Thus, by pulling back the metric $\WFRAFho$ on $\Meas^+_{H,0}(S^{d-1})$, one can in turn view the space of all convex shapes as a geodesic space with some potentially interesting characteristics of the associated metric and the geodesics, which were hinted in \cite{charon_length_2021} in the case $d=2$. While our present results mainly address the question of {the} existence of geodesics, we leave a more in depth study of the properties of such metrics to future work.
\end{example}

\bibliographystyle{alpha}  % defines the style of the Bibliography
\bibliography{citation} 
\end{document}